\newtheorem{theorem}{Theorem}[section]
\newtheorem{lemma}[theorem]{Lemma}
\newtheorem{proposition}[theorem]{Proposition}
\newtheorem{corollary}[theorem]{Corollary}
\newtheorem{theorema}{Theorem}
\theoremstyle{definition}
\newtheorem{definition}[theorem]{Definition}
\newtheorem{remark}[theorem]{Remark}
\newtheorem{example}[theorem]{Example}
\theoremstyle{remark}
\DeclareMathOperator{\gr}{gr}
\DeclareMathOperator{\lin}{lin}
\newcommand{\fm}{\mathfrak{m}}
\newcommand{\pd}{\mathrm{projdim}}
\DeclareMathOperator{\Ext}{Ext}
\DeclareMathOperator{\Tor}{Tor}
\begin{document}

\title[Homotopy types associated to almost linear resolutions]{Homotopy types of moment-angle complexes\\ associated to almost linear resolutions} 

\date{}

\author[S.~Amelotte]{Steven Amelotte}
\address{School of Mathematics and Statistics, 4302 Herzberg Laboratories, Carleton University, Ottawa, ON, K1S 5B6, Canada}
\email{steven.amelotte@carleton.ca} 

\author[B.~Briggs]{Benjamin Briggs}
\address{Department of Mathematics, Huxley Building, South Kensington Campus, Imperial College London, 
London, SW7 2AZ, U.K.}
\email{b.briggs@imperial.ac.uk}

\keywords{Moment-angle complex, Stanley--Reisner ideal, linear resolution, Koszul module, formality}
\subjclass[2020]{57S12, 13D02, 13F55}


\begin{abstract}
We show that the Hurewicz image in the homology of a moment-angle complex, when passed through an isomorphism with the Ext-module of the corresponding Stanley--Reisner ideal, contains the linear strand of this ideal. 
This recovers and refines results of various authors identifying the homotopy type of a moment-angle complex as a wedge of spheres when the underlying ideal satisfies certain linearity properties. Going further, we study the homotopy types of moment-angle manifolds associated to Gorenstein Stanley--Reisner ideals with (componentwise) almost linear resolutions. The simplicial complexes that give rise to these manifolds are part of an even larger class that we introduce, which generalises the homological behaviour of cyclic polytopes, stacked polytopes and odd-dimensional neighbourly sphere triangulations. For these simplicial complexes the associated moment-angle manifolds are shown to be formal, having the rational homotopy type of connected sums of sphere products, and the (integral) loop space homotopy type of products of spheres and loop spaces of spheres. Along the way we establish a number of purely algebraic results, in particular generalising a result of R\"omer characterising Koszul modules so that it can be applied to modules with almost linear resolutions.
\end{abstract}

\maketitle

\setcounter{tocdepth}{1}
\tableofcontents

\section{Introduction}

The {moment-angle complex} $\mathcal{Z}_K$ associated to a simplicial complex $K$ is a fundamental object of study in toric topology. The equivariant topology of $\mathcal{Z}_K$ is intimately tied to the homological algebra of the corresponding Stanley--Reisner ring $k[K]$. In particular, writing $k[K]=S/I_K$ where $S=k[v_1,\ldots,v_m]$ is a polynomial ring, there is a natural isomorphism
\[
H^*(\mathcal{Z}_K;k)\cong \Tor^S(k[K],k).
\]
This relationship between the Betti numbers of the space $\mathcal{Z}_K$ and those of the algebra $k[K]$ was tightened in the authors' previous work~\cite{AB}, where it was shown that the minimal free resolution of $k[K]$ over $S$ can be recovered from the data of $H^*(\mathcal{Z}_K;k)$ together with a family of higher cohomology operations induced by the standard torus action on $\mathcal{Z}_K$ (see \cref{thm_min_res} below). In this paper, we consider the problem of determining the homotopy type of $\mathcal{Z}_K$ from properties of the minimal free resolution of $k[K]$.

Although the cohomology of $\mathcal{Z}_K$ is well understood, relatively little is known about its homotopy type in general. Much work in this direction has focused on determining when $\mathcal{Z}_K$ is homotopy equivalent to a wedge of spheres, which is closely related to the Golod property for $k[K]$. Beginning with work of Grbi\'c and Theriault~\cite{GT04} on the complements of coordinate subspace arrangements, this has been shown to hold, in increasing level of generality, when the Alexander dual $K^\vee$ of the simplicial complex $K$ is shifted~\cite{GT07,GT13,IK13}, vertex decomposable~\cite{GW}, shellable or sequentially Cohen--Macaulay 
\cite{IK19}, and when $K$ is an HMF complex~\cite{BG}.

Our first main result starts from the observation that all of these conditions on $K$ can be understood in terms of the minimal free resolution of the Stanley--Reisner ideal $I_K$. For example, by a classical theorem of Eagon and Reiner~\cite{ER}, $K^\vee$ is Cohen--Macaulay over $k$ if and only if $I_K$ has a linear resolution over~$S$. Generalising this to non-pure simplicial complexes and non-equigenerated ideals, Herzog and Hibi~\cite{HH} proved that $K^\vee$ is sequentially Cohen--Macaulay over $k$ if and only if $I_K$ is componentwise linear over~$S$, which in turn
is equivalent to $I_K$ being a Koszul $S$-module by a result of R\"omer~\cite{R}. More generally still, $I_K$ is a quasi-Koszul $S$-module if and only if $K$ is an HMF complex over $k$ (see \cref{prop_HMF}). Here quasi-Koszul means that $\Ext_S(I_K,k)$ is generated in degree zero as a $\Lambda=\Ext_S(k,k)$-module.

Based on this, we define the \emph{quasi-linear strand} of $I_K$ to be the $\Lambda$-submodule of $\Ext_S(I_K,k)$ generated by degree zero classes (for equigenerated ideals this coincides with the usual linear strand). Through the identification  $\widetilde{H}_*(\mathcal{Z}_K;k) \cong \Ext_S(I_K,k)$, dual to the isomorphism above, we may view the quasi-linear strand as a subspace of $H_*(\mathcal{Z}_K;k)$ and attempt to understand it topologically. Since a simply connected space has the homotopy type of a wedge of spheres precisely when its Hurewicz map is surjective, all of the above results on the homotopy type of $\mathcal{Z}_K$ are recovered by the following theorem.

\begin{theorema}[\cref{thm_Hurewicz_strand}, \cref{cor_HMF}, \cref{cor_compwise}] \label{thm_introA}
Let $K$ be a simplicial complex and let $k$ be a field of characteristic~$p\geqslant 0$. Then the quasi-linear strand of $k[K]$ is contained in (the $k$-linear span of) the image of the Hurewicz map 
\[ 
h\colon \pi_*(\mathcal{Z}_K)\longrightarrow {H}_*(\mathcal{Z}_K;k).
\]
In particular, $\mathcal{Z}_K$ is $p$-locally homotopy equivalent to a wedge of spheres whenever $I_K$ has a componentwise linear resolution, or more generally, is a quasi-Koszul $S$-module.
\end{theorema}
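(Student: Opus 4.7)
The plan is to establish the containment by verifying two facts: that the degree zero part of $\Ext_S(I_K,k)$ lies in the Hurewicz image, and that the Hurewicz image in $H_*(\mathcal{Z}_K;k)$ is stable under the $\Lambda$-action. Since the quasi-linear strand is by definition the $\Lambda$-submodule generated by the degree zero classes, together these deliver the main containment. The remaining assertions then follow: once $I_K$ is quasi-Koszul the quasi-linear strand exhausts $\Ext_S(I_K,k)\cong\widetilde{H}_*(\mathcal{Z}_K;k)$, so the Hurewicz map is surjective; for a simply connected CW complex this forces $p$-local equivalence with a wedge of spheres. R\"omer's theorem (or its promised generalisation in this paper) then handles the componentwise linear case.

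For the first fact, I would identify the degree zero classes of $\Ext_S(I_K,k)$ with duals of the minimal generators of $I_K$, i.e.\ with the minimal non-faces of $K$. For each minimal non-face $\sigma\subseteq[m]$ of size $j$, the boundary of the polydisc $\prod_{i\in\sigma}D^2$ (placed at $1\in S^1$ in the remaining coordinates) is a sphere $S^{2j-1}$ which lies inside $\mathcal{Z}_K$ precisely because every proper subset of $\sigma$ is a face while $\sigma$ itself is not. This produces a map $w_\sigma\colon S^{2j-1}\to\mathcal{Z}_K$, a higher Whitehead-type product of the canonical circle generators. Tracing through Hochster's formula, the summand $\widetilde{H}^{j-2}(\partial\Delta^{j-1})=k$ associated with $\sigma$ is hit by $h(w_\sigma)$, so these classes realise the intended degree zero generators.

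For the second, I would realise the $\Lambda$-action topologically via the standard $T^m$-action on $\mathcal{Z}_K$. Since $\Lambda=\Ext_S(k,k)$ is generated in degree one by $\xi_1,\dots,\xi_m$, it suffices to realise each $\xi_i$. The restricted action $\mu_i\colon S^1\times\mathcal{Z}_K\to\mathcal{Z}_K$ factors up to homotopy through $\Sigma\mathcal{Z}_K$ (using simple connectivity of $\mathcal{Z}_K$ to trivialise the orbit circle of the basepoint), yielding $\tilde{\mu}_i\colon\Sigma\mathcal{Z}_K\to\mathcal{Z}_K$ whose induced map on homology should be the Yoneda action of $\xi_i$. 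Closure of the Hurewicz image is then immediate: if $[f]\in\pi_n(\mathcal{Z}_K)$ has Hurewicz image $\alpha$, the composite $\tilde{\mu}_i\circ\Sigma f\colon S^{n+1}\to\mathcal{Z}_K$ is a spherical class hitting $\xi_i\cdot\alpha$.

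The main obstacle is the compatibility at the heart of this last step: confirming that the $H_*(T^m)$-action on $H_*(\mathcal{Z}_K;k)$ produced by the torus action coincides, under the identifications $H_*(T^m)\cong\Lambda$ and $H_*(\mathcal{Z}_K;k)\cong\Ext_S(k[K],k)$, with the Yoneda action. I would either extract this from the Eilenberg--Moore machinery of the Borel fibration $\mathcal{Z}_K\to ET^m\times_{T^m}\mathcal{Z}_K\to BT^m$ (whose base has cohomology $S$ and whose total space has cohomology $k[K]$), or invoke the dual form of the identification of $T^m$-induced cohomology operations with differentials in the minimal free resolution established in the authors' prior work \cite{AB}. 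Once this compatibility is secured, the topological ingredients above assemble into the theorem.
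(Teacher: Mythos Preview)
Your proposal is correct and tracks the paper's argument closely: both realise the degree-zero generators of $\Ext_S(I_K,k)$ via the sphere maps $\omega_\sigma\colon S^{2|\sigma|-1}=\mathcal{Z}_{K_\sigma}\to\mathcal{Z}_K$ attached to minimal non-faces, and both close the Hurewicz image under the $\Lambda$-action by extending each coordinate circle action across a cone on the orbit of the basepoint and then precomposing with $f$. One small correction: the action $\mu_i$ does not factor through $\Sigma\mathcal{Z}_K$, since its restriction to $\{*\}\times\mathcal{Z}_K$ is the identity; what you actually obtain is an extension over the half-smash $S^1\ltimes\mathcal{Z}_K\simeq\mathcal{Z}_K\vee\Sigma\mathcal{Z}_K$, and your $\tilde{\mu}_i$ is its restriction to the suspension summand. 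The paper writes this extension down explicitly using the hypothesis $\{i\}\in K$ (filling in the $i$-th disk coordinate) rather than invoking simple connectivity, and the compatibility you flag as the main obstacle is precisely \cref{prop_homology_Z_K}, extracted from~\cite{AB}.

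The only substantive difference is that the paper proves more than is required for Theorem~A: rather than merely exhibiting \emph{some} spherical class hitting $\lambda_i\cdot h(f)$, \cref{thm_Hurewicz_equiv} identifies it as the lift through $\omega$ of the Whitehead product $[\mu_i,\omega\circ f]\in\pi_*(DJ_K)$. This sharper statement is not needed here but is used downstream to describe generators of $H_*(\Omega\mathcal{Z}_K;k)$ as iterated commutators (\cref{cor_brackets}) and to build the section in \cref{cor_section}. Your streamlined version suffices for the theorem as stated.
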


Above, a ``$0$-local" homotopy equivalence means a rational homotopy equivalence. It follows that $\mathcal{Z}_K$ splits as a wedge of spheres without any localisation when the hypotheses are satisfied over all fields $k=\mathbb{F}_p$, or over $\mathbb{Z}$ (see \cref{sec_hurewicz}).

Combined with Fr\"oberg's classification of quadratic Stanley--Reisner ideals with linear resolutions, \cref{thm_introA} also recovers the result from~\cite{GPTW} that $\mathcal{Z}_K$ is homotopy equivalent to a wedge of spheres when $K$ is a flag complex with chordal $1$-skeleton. Using a generalisation of Fr\"oberg's theorem due to Eisenbud, Green, Hulek and Popescu~\cite{EGHP}, we refine this result in \cref{thm_flag} to a combinatorial classification of flag complexes $K$ for which a given skeleton of $\mathcal{Z}_K$ has the homotopy type of a wedge of spheres.

We are especially interested in the homotopy types of moment-angle manifolds. By a result of Cai~\cite[Corollary~2.10]{C}, combined with a well-known theorem of Stanley~\cite[Theorem~II.5.1]{Stan}, $\mathcal{Z}_K$ is a manifold exactly when $K$ is Gorenstein over all fields $k$. In this case, $I_K$ cannot have a linear resolution and $\mathcal{Z}_K$ cannot have the homotopy type of a wedge of spheres, except in trivial situations where $\mathcal{Z}_K\simeq S^{2n+1}$ (see \cref{sec_manifolds}). It turns out, however, that in the Gorenstein setting $I_K$ often has an \emph{almost linear resolution}, meaning that the nonzero entries in all but the last differential of its minimal free resolution are linear forms. We classify these ideals in \cref{prop_alin}, which shows in particular that if $K$ is a sphere triangulation, then $I_K$ has an almost linear resolution if and only if $K$ is odd-dimensional and neighbourly. 

A typical example is the Stanley--Reisner ideal of $K=C_m$, the cycle graph or boundary of a polygon (see \cref{ex_cyclic}). The corresponding moment-angle manifolds $\mathcal{Z}_{C_m}$ are one of only a few families of examples whose homotopy types have been determined explicitly: Bosio and Meersseman~\cite{BM} showed that $\mathcal{Z}_K$ is diffeomorphic to a connected sum of products of spheres when $K=C_m$, or more generally, when $K$ is the boundary complex of a stacked polytope. Another important example comes from work of Gitler and L\'opez de Medrano~\cite{GL}, who showed that $\mathcal{Z}_K$ is a connected sum of sphere products if $K=\partial P$ for an even-dimensional neighbourly simplicial polytope $P$. The geometric results of \cite{BM} and \cite{GL} apply only to certain polytopal sphere triangulations, for which $\mathcal{Z}_K$ can be identified with a smooth manifold defined by an intersection of quadrics. It is known, however, that the vast majority of sphere triangulations are not polytopal.\footnote{For example, according to~\cite{K}, the number of simplicial spheres on 1,000,000 vertices is at least $2^{2^{694,200}}$, while less than $2^{2^{62}}$ of them are polytopal! See \cref{ex_nbrly} for asymptotic bounds on the number of simplicial and polytopal $n$-spheres on $m$ vertices.} On the other hand, neighbourly sphere triangulations abound, and it is conjectured that \emph{most} triangulations of any odd-dimensional sphere are neighbourly when the number of vertices is sufficiently large (see \cref{ex_nbrly}).

Just as ideals in $S$ with (componentwise) linear resolutions are generalised by those which are quasi-Koszul $S$-modules, ideals with (componentwise) almost linear resolutions belong to a much larger class which we call \emph{almost quasi-Koszul} modules; see \cref{sec_manifolds}. The collection of Gorenstein complexes $K$ for which $I_K$ is almost quasi-Koszul includes all odd-dimensional neighbourly sphere triangulations, the boundary complexes of all cyclic and stacked polytopes, and many Gorenstein complexes for which $|K|$ is non-simply connected. Moreover, we show that this collection is closed under connected sums of simplicial complexes and stellar subdivision of facets (see \cref{sec_examples}). In particular, unlike neighbourly sphere triangulations, these $K$ may have missing faces of arbitrary size. The next theorem describes the homotopy types of the corresponding moment-angle manifolds up to a twist in the attaching map of the top cell.

\begin{theorema}[\cref{thm_conn_sum}] \label{thm_introB}
Let $K\neq\partial\Delta^{m-1}$ be an $(n-1)$-dimensional Gorenstein$^*$ complex on~$m$ vertices. If $I_K$ is an almost quasi-Koszul module, then the following hold:
\begin{enumerate}[label={\normalfont(\arabic*)}]
\item $\mathcal{Z}_K$ is an $(m+n)$-manifold with $(m+n-1)$-skeleton homotopy equivalent to a wedge of spheres;
\item $\mathcal{Z}_K$ is rationally homotopy equivalent to a connected sum of sphere products with two spheres in each product;
\item $\Omega\mathcal{Z}_K$ is homotopy equivalent to a product of spheres and loop spaces of spheres.
\end{enumerate}
\end{theorema}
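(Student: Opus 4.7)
The plan is to handle the three assertions in order, with (1) providing the CW backbone that (2) and (3) build upon.

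For (1), that $\mathcal{Z}_K$ is a closed $(m+n)$-manifold is standard from the Gorenstein$^*$ hypothesis via Cai's theorem. The almost quasi-Koszul condition, combined with the single Gorenstein top Tor class, says that as a $\Lambda$-module
\[
\Ext_S(I_K,k) \;=\; \Lambda\cdot \Ext_S^{0}(I_K,k) \;+\; k\cdot\xi,
\]
where $\xi$ is the unique generator in top homological degree, corresponding under $\widetilde H_*(\mathcal{Z}_K;k)\cong \Ext_S(I_K,k)$ to the Poincar\'e dual of $1\in H^0(\mathcal{Z}_K;k)$. Using the minimal resolution recovered from \cite{AB}, I would build a minimal CW model $\mathcal{Z}_K \simeq X\cup_\phi e^{m+n}$ whose $(m+n-1)$-skeleton $X$ has reduced homology exactly the quasi-linear strand. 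Theorem~A then places $\widetilde H_*(X;k)$ in the image of $h\colon \pi_*(\mathcal{Z}_K)\to H_*(\mathcal{Z}_K;k)$, and since this Hurewicz map factors through $\pi_*(X)$, the Hurewicz map of $X$ is itself surjective. As $X$ is simply connected, a standard cell-trading argument then identifies $X$ with a wedge of spheres.

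For (2), I would first establish rational formality of $\mathcal{Z}_K$, extending the formality available for quasi-Koszul Stanley--Reisner ideals to the almost quasi-Koszul setting by observing that attaching a single top cell via a Whitehead bracket to a formal wedge of spheres preserves formality. Once formality is in hand, the rational homotopy type of $\mathcal{Z}_K$ is determined by its rational cohomology ring, and the combination of (1) with Poincar\'e duality endows this ring with the structure of a connected sum of sphere products $\#_i\, S^{a_i}\times S^{b_i}$ with $a_i+b_i=m+n$: below top degree all cup products vanish, while the top pairing matches each generator with its Poincar\'e dual in complementary degree. Matching this to a formal Poincar\'e duality model of such a connected sum then produces the desired rational equivalence.

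Finally, for (3), the presentation $\mathcal{Z}_K \simeq X\cup_\phi e^{m+n}$ reduces the loop space problem to analysing the attaching map $\phi\in\pi_{m+n-1}(X)$. By the Hilton--Milnor decomposition of the homotopy groups of a wedge, $\phi$ is a sum of iterated Whitehead brackets, and the loop space of the cofibre of such a Poincar\'e duality attaching map splits integrally as a product of spheres and loop spaces of spheres by loop space decomposition techniques due to Beben, Theriault and collaborators. The principal obstacle, where I expect the main technical work to lie, is showing that $\phi$ has the correct integral form: ruling out prime-local twists that could spoil the splitting should reduce to verifying that the almost quasi-Koszul decomposition holds uniformly over every residue field $\mathbb{F}_p$, so that both the quasi-linear strand and the single top class are detected at each prime.
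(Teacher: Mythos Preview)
Your approach to (1) is essentially identical to the paper's: both use the almost quasi-Koszul hypothesis to place $H_*(\operatorname{sk}_{m+n-1}\mathcal{Z}_K)$ inside the quasi-linear strand, invoke Theorem~A to get Hurewicz surjectivity, and conclude the skeleton is a wedge of spheres. (Any prime-by-prime check belongs here, not in (3): the paper simply notes that almost quasi-Koszulity over $\mathbb{Z}$ is equivalent to almost quasi-Koszulity over every field, so (1) holds integrally.)

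For (2) and (3) you diverge from the paper and in both cases make life harder than necessary. For (2), the paper does \emph{not} establish formality first. It directly computes the rational cohomology ring of $\mathcal{Z}_K$ (the squarefree multigrading forces $x^2=0$, and (1) yields a symplectic basis), observes that both $\mathcal{Z}_K$ and the candidate connected sum have the form $(\text{wedge of spheres})\cup e^{m+n}$, and then invokes Stasheff's uniqueness theorem~\cite[Theorem~1]{St}: a simply connected $N$-dimensional Poincar\'e duality complex with prescribed cohomology ring and $(N-1)$-skeleton has a unique rational homotopy type. Formality is a \emph{corollary}, not an input. Your proposed step---``attaching a single top cell via a Whitehead bracket to a formal wedge of spheres preserves formality''---is imprecise as stated: the attaching map need not be a single Whitehead product, and attaching cells along general sums of iterated Whitehead products does not preserve formality without further hypotheses. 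The statement can be salvaged (e.g.\ via intrinsic formality of the target cohomology ring), but the cleanest route is Stasheff's argument, which bypasses the issue.

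For (3) you correctly point toward Beben--Theriault, but the ``principal obstacle'' you anticipate---controlling the integral form of $\phi$ and ruling out prime-local twists---does not exist. The paper simply quotes \cite[Proposition~3.2]{BT}, which applies integrally and requires only that the space be a $2$-connected Poincar\'e duality complex whose codimension-one skeleton is a wedge of spheres. Both hypotheses are already supplied by (1), and the loop space decomposition follows immediately with no inspection of $\phi$ whatsoever.
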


Applications to formality, LS category, minimal non-Golodness, Pontryagin rings and Yoneda algebras are deduced in \cref{sec_manifolds}. Given that most triangulations of an odd-dimensional sphere are conjectured to be neighbourly, \cref{thm_introB} suggests that formality may be typical or generic for moment-angle manifolds. This should be contrasted with results of Denham and Suciu in~\cite[\S8.3]{DS}, where it is speculated that most moment-angle complexes are not formal.

Many of our arguments are based on homological properties of the Stanley--Reisner ring $k[K]$, and in \cref{se_CAL_algebra} we draw out these properties and prove a number of purely algebraic results. The prototype is a result of R\"omer \cite{R} that characterises Koszul modules over Koszul rings as those modules with componentwise linear resolutions; we establish an analogue characterising modules with resolutions that are componentwise linear up to a given degree, in terms of partial exactness of the linear part (see \cref{thm_complin_steps}). This result can in particular be applied to Gorenstein ideals in polynomial rings, which are essentially never componentwise linear but may be componentwise almost linear.

\subsection*{Acknowledgements} 
We gratefully acknowledge our use of the \emph{Manifold Page} created by Frank Lutz \cite{ManifoldPage}, which contains a huge library of manifold triangulations to which our results can be applied; to analyze these examples we also made extensive use of {\tt Macaulay2}~\cite{GS}. We also thank Fedor Vylegzhanin for drawing our attention to the work~\cite{CS}. The first author was partially supported by the Fields Institute for Research in Mathematical Sciences. The second author was partly funded by the European Union under the Grant Agreement no.\ 101064551, and partly by the National Science Foundation under Grant No.\ DMS-1928930 while hosted at SLMath (formerly MSRI).

\subsection*{Notations and conventions:}  
A simplicial complex $K$ on the vertex set $[m]=\{1,\ldots,m\}$ is a nonempty collection of subsets of $[m]$, closed under passing to subsets. Unless otherwise stated, we will assume that $K$ has no ghost vertices; that is, $\{i\}\in K$ for all $i\in[m]$. For $U\subseteq [m]$, $K_U$ denotes the \emph{full subcomplex} $\{\sigma\in K \,:\, \sigma\subseteq U\}$.
The \emph{Alexander dual} of $K$ is the simplicial complex $K^\vee=\big\{ [m]\smallsetminus F \, :\, F\notin K\big\}$. For each integer $q$, we write $K^\vee(q)=\{I\, :\, I\subseteq J\in K^\vee\text{ with }|J|=q\}$ for the (pure) subcomplex of $K^\vee$ generated by faces of size~$q$.

Throughout, $k$ is a field of any characteristic (occasionally, when pointed out, we may take $k$ to be the integers). Having fixed $m$, we write $S$ for the polynomial ring $k[v_1,\ldots,v_m]$ over $k$. The \emph{Stanley--Reisner ring} (over $k$)  of a simplicial complex $K$ on the vertex set $[m]$ is the quotient
\[
k[K]=S/I_K,\quad \text{with}\quad I_K=\big(v_{i_1}\!\cdots v_{i_q} \,:\, \{i_1,\ldots,i_q\} \notin K\big).
\]

\section{Free resolutions and linearity}
\label{se_lin_background}
Let $S$ be a standard graded $k$-algebra with maximal graded ideal $\fm$ (we are interested mostly in the polynomial case, but it will cost us little to be more general). In this section we briefly review various linearity properties for graded $S$-modules. 

In later sections, we will consider how linearity properties of Stanley--Reisner ideals are reflected in the topology of their corresponding moment-angle complexes. In \cref{se_CAL_algebra}, we further will develop the theory of componentwise almost linear ideals by building on some ideas of Iyengar and R\"omer.

\subsection*{Componentwise linear resolutions} Let $M$ be a finitely generated graded $S$-module. The \emph{graded Betti numbers} 
\[ \beta_{i,j}(M)=\dim_k\Tor^S_{i}(M,k)_j \]
describe the ranks of the free $S$-modules appearing in  the minimal graded free resolution $F\to M$:\\ 
\[
\cdots\to F_{i} \to\cdots\to F_1 \to F_0 \to M \to 0, \qquad F_{i} = \bigoplus_{j\in\mathbb{Z}}S(-j)^{\beta_{i,j}(M)}.
\]
We say that $M$ has a \emph{linear resolution}
if there exists an integer $d$ such that each $F_i$ is generated in degree $i+d$, or in other words,
\[ \beta_{i,j}(M)=0 \; \text{ for } j\neq i+d. \]
In this case, $M$ is generated in degree $d$ and the differentials in the minimal free resolution can be represented by matrices of linear forms.

More generally, $M$ is said to satisfy condition $N_{d,p}$ if it is generated in degree $d$ and its resolution $F$ is linear in the first $p$ steps, meaning that
\[ 
\beta_{i,j}(M)=0 \; \text{ for } j\neq i+d \text{ when } i<p. 
\]
For example, $M$ satisfies condition $N_{d,1}$ if it is generated in degree $d$, and $M$ satisfies condition $N_{d,2}$ if, additionally, it is linearly presented. Note that $M$ has a linear resolution if $M$ satisfies condition $N_{d,p}$ for all $p$. We say that $M$ has an \emph{almost linear resolution} if $M$ satisfies condition $N_{d,p}$ for $p=\pd_S(M)$, the projective dimension of $M$. 

The \emph{Green--Lazarsfeld index} of $M$, denoted $\operatorname{index}(M)$, is an invariant that measures how far along the minimal free resolution a first nonlinear syzygy occurs. Explicitly, for a graded $S$-module $M$ generated in degree $d$,
\[ \operatorname{index}(M)= \sup\{p\in\mathbb{Z} \,:\, M \text{ satisfies condition } N_{d,p} \}. \]
Thus, $M$ has a linear resolution if and only if $\operatorname{index}(M)=\infty$, and $M$ has an almost linear resolution if and only if $\operatorname{index}(M) \geqslant \pd_S(M)$.
 
 Going beyond the equigenerated case, Herzog and Hibi introduced the notion of componentwise linearity in~\cite{HH}. We write $M_{\langle d\rangle} = SM_d$ for the $S$-submodule generated by the degree $d$ part of $M$, and likewise $M_{\langle \leqslant d\rangle}= SM_{ \leqslant d}$. Using this notation, we say that $M$ is \emph{componentwise linear} if $M_{\langle d\rangle}$ has a linear resolution for all $d$. Similarly, $M$ is \emph{componentwise linear in the first $p$ steps}, or \emph{componentwise $N_{d,p}$}, if the module $M_{\langle d\rangle}$ satisfies condition $N_{d,p}$ for all $d$.

\subsection*{The linear part}  A related linearity condition is defined in terms of the linear part of a complex, introduced in~\cite{EFS}.
Let $F$ be a minimal complex of graded $S$-module. We first set $F_{\langle \leqslant d\rangle}$ to be the free subcomplex of $F$ with $(F_{\langle \leqslant d\rangle})_n= (F_{n})_{ \langle \leqslant d+n\rangle}$.
With this we define the $d$-linear strand to be the subquotient $F_{\langle d\rangle} = F_{\langle \leqslant d\rangle}/F_{\langle \leqslant d-1\rangle}$. The \emph{linear part} of $F$ is then the direct sum of its linear strands
\[
\lin(F) = \bigoplus_d F_{\langle d\rangle}.
\]
Intuitively, $\lin(F)$ is simply the complex with  $\lin_n(F)=F_n$ and with differential $\partial^{\lin}$ obtained by deleting all nonlinear entries from homogeneous matrices representing the differential of $F$.

Alternatively, we may define $\lin(F)$ as the associated graded complex with respect to its $\fm$-adic filtration, and with differential inherited from the factorisation $\partial\colon F\to \fm F\subseteq F$; cf.\ \cite{HI}. With this perspective we see that $\lin(F)$ does not actually depend on the additional grading on $F$.

We note that there is a natural augmentation map $F_{\langle d \rangle}=F_{\langle \leqslant d\rangle}/F_{\langle \leqslant d-1\rangle} \to M_{\langle \leqslant d\rangle}/M_{\langle \leqslant d-1\rangle}$, and that these assemble into an augmentation $\lin(F)\to \bigoplus_d M_{\langle \leqslant d\rangle}/M_{\langle \leqslant d-1\rangle}$; we also note that the target of this map is none other than the associated graded module $\gr(M)$ of $M$ with respect to its $\fm$-adic filtration. Put in other words, there is a natural map
\begin{equation}\label{eq_H0lin}
H_0(\lin(F))\to \gr(M)
\end{equation}
and it may fail to be an isomorphism (but see \cref{thm_complin_steps}).

\subsection*{Koszul modules} Following \cite{HI}, a finitely generated $S$-module $M$ is called \emph{Koszul} if it has a minimal free resolution $F\to M$ such that $\lin(F)$ is acyclic. As shown in~\cite[Remark~1.10]{HI}, $S$ is a Koszul algebra in the classical sense if and only if the $S$-module $k$ is Koszul in the above sense. The connection between linearity and Koszulity was established by R\"omer:

\begin{theorem}[R\"omer {\cite{R}, see also \cite[Theorem~5.6]{IR}}] \label{thm_complin_iff_Kos}
Assume that $S$ is Koszul, and let $M$ be a finitely generated graded $S$-module. Then $M$ is componentwise linear if and only if $M$ is a Koszul module. \qed
\end{theorem}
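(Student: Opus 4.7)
The plan is to prove both implications of \cref{thm_complin_iff_Kos} by exploiting the filtration $F_{\langle \leqslant d\rangle}$ of the minimal free resolution $F\to M$, whose associated graded is the linear part $\lin(F)=\bigoplus_d F_{\langle d\rangle}$. The Koszul hypothesis on $S$ is what ensures this linear part functor interacts well with short exact sequences of minimal complexes, and that is the main algebraic input.

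For the direction Koszul $\Rightarrow$ componentwise linear, suppose $\lin(F)$ is acyclic. Since the differential of $\lin(F)$ respects the internal grading by generating degree, each linear strand $F_{\langle d\rangle}$ is a summand of $\lin(F)$ and is therefore itself an acyclic minimal complex of free $S$-modules. I would then check that $F_{\langle d\rangle}$ is in fact a minimal linear free resolution of $M_{\langle d\rangle}$, using the augmentation \eqref{eq_H0lin} (an isomorphism in the Koszul case) together with a degree-wise count of generators. Componentwise linearity follows immediately from the definition.

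For the direction componentwise linear $\Rightarrow$ Koszul, I would induct on the number $s$ of distinct generating degrees of $M$. When $s=1$ the module $M$ is equigenerated and componentwise linearity reduces to $M$ having a linear resolution, in which case $\lin(F)=F$ is acyclic. For the inductive step, let $d_0$ be the smallest generating degree of $M$ and consider
\[
0\to M_{\langle d_0\rangle}\to M\to M/M_{\langle d_0\rangle}\to 0.
\]
A short verification shows that $M/M_{\langle d_0\rangle}$ is componentwise linear with $s-1$ generating degrees, and that $M_{\langle d_0\rangle}$ is equigenerated with a linear resolution by hypothesis; so both outer terms are Koszul by the inductive hypothesis.

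The main obstacle is showing that Koszulity passes to the middle term of such a short exact sequence. For this I would build a (typically non-minimal) free resolution $F$ of $M$ as a mapping cone of a lift $F''\to F'[-1]$ of the connecting map between the given minimal resolutions $F'\to M_{\langle d_0\rangle}$ and $F''\to M/M_{\langle d_0\rangle}$, and then analyse its linear part. Although $\lin$ is not an exact functor in general, the Koszul hypothesis on $S$ (equivalently, acyclicity of $\lin$ applied to the minimal resolution of $k$) allows the lift to be chosen compatibly with the $\fm$-adic filtrations on $F'$ and $F''$, producing a short exact sequence
\[
0\to \lin(F')\to \lin(F)\to \lin(F'')\to 0
\]
of complexes of free $S$-modules. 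The long exact sequence in homology then forces $\lin(F)$ to be acyclic, completing the induction. This mapping-cone and filtration strategy is the content of R\"omer's original argument in~\cite{R}, reformulated homologically in~\cite[Theorem~5.6]{IR}; the technical heart is ensuring that the required filtered lift exists, which is precisely the role played by Koszulity of $S$.
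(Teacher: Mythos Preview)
The paper states this theorem without proof (it is marked with \qed as a citation to R\"omer and Iyengar--R\"omer), but later proves a strict generalisation in \cref{thm_complin_steps}; so the relevant comparison is with that argument.

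Your forward direction (Koszul $\Rightarrow$ componentwise linear) has a genuine gap. You assert that each acyclic strand $F_{\langle d\rangle}$ is a minimal linear free resolution of $M_{\langle d\rangle}$, but the augmentation \eqref{eq_H0lin} lands in $\gr(M)_d = M_{\langle \leqslant d\rangle}/M_{\langle \leqslant d-1\rangle}$, not in $M_{\langle d\rangle}=SM_d$, and these modules differ whenever $M$ has minimal generators in degrees below $d$ (their degree-$d$ pieces are $M_d/(\fm M)_d$ and $M_d$, respectively). So acyclicity of $F_{\langle d\rangle}$ only shows that $M_{\langle \leqslant d\rangle}/M_{\langle \leqslant d-1\rangle}$ has a linear resolution; deducing linearity of $M_{\langle d\rangle}$ from this still requires the inductive peeling-off argument you set up for the other direction. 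The identification $H_0(F_{\langle d\rangle})=M_{\langle d\rangle}$ is only valid when $d$ is the \emph{minimal} generating degree, which is exactly how the paper (and \cite{IR}) uses it in the proof of \cref{thm_complin_steps}.

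Your backward direction is correct in outline but overcomplicated. The mapping cone you build is in fact \emph{minimal}: since $M_{\langle d_0\rangle}$ has a linear resolution $F'$ and $S$ is Koszul, $\Tor^S_n(M/M_{\langle d_0\rangle},k)$ is concentrated in degrees $\geqslant d_0+1+n$, so the connecting maps $F''_n\to F'_{n-1}$ automatically land in $\fm F'_{n-1}$. Hence the cone is already the minimal resolution $F$ of $M$, with $F'=F_{\langle d_0\rangle}$ its lowest strand, and the canonical splitting $\lin(F)=F_{\langle d_0\rangle}\oplus \lin(F/F_{\langle d_0\rangle})$ of \eqref{eq_lin_decomp} gives your short exact sequence of linear parts for free---no delicate filtered lift needs to be chosen. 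This is precisely the mechanism used in the paper's proof of \cref{thm_complin_steps}.
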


We will establish a more general analogue of this theorem for componentwise linearity in the first $p$ steps; see \cref{thm_complin_steps} below.

\subsection*{The quasi-linear strand} 
We are interested in the part of the resolution of $M$ that can be reached from minimal generators of $M$ by following linear terms of the differential, and these linear terms in the differential can be detected by the action of $\Ext_S^1(k,k)$ on $\Ext_S(M,k)$. If $\Ext_S(M,k)$ is generated in degree zero under the action of the subalgebra of $\Ext_S(k,k)$ generated by $\Ext_S^1(k,k)$, then $M$ is called a \emph{quasi-Koszul} $S$-module \cite{GM} (this is also called a $\mathcal{K}_1$ module in~\cite{CS}). 

If $M$ is generated in a single degree, then it is Koszul exactly when it is quasi-Koszul by~\cite[Proposition~4.4]{CS}. In general, all Koszul modules are quasi-Koszul (we include a short proof for Stanley--Reisner ideals in \cref{lem_Kos_qKos}, and we prove a generalisation in \cref{prop_qlinstrand}), but the converse does not hold; see \cite[Example~7.2]{CS} for a counterexample (compare as well with \cref{ex_AQK}).

The notion of a quasi-Koszul module motivates the next definition.

\begin{definition} \label{def_strand}
The \emph{quasi-linear strand} of $M$ is the submodule of $\Ext_S(M,k)$ generated by $\Ext_S^0(M,k)$ under the action of the $k$-subalgebra  of $\Ext_S(k,k)$ generated by $\Ext_S^1(k,k)$. Note that if $S$ is a Koszul algebra (as it will always be for us in our applications), then $\Ext_S(k,k)$ is generated in degree one, and the quasi-linear strand of $M$ is simply the $\Ext_S(k,k)$-submodule of $\Ext_S(M,k)$ generated by degree zero elements.
\end{definition}

Clearly $M$ is a quasi-Koszul module if and only if its quasi-linear strand equals all of $\Ext_S(M,k)$. If $M$ is generated in a single degree $d$, then $\beta_{i,j}(M)=0$ for $j<i+d$, and the quasi-linear strand agrees with the ``lowest linear strand" $\bigoplus_{i\geqslant 0} \Ext_S^i(M,k)_{i+d}$

We will often use this definition when $M=I$ is an ideal of $S$. In that case, $\Ext_S^0(I,k)=\Ext_S^1(S/I,k)$, and so we will refer to the $\Ext_S(k,k)$-submodule of $\Ext_S(S/I,k)$ generated by $\Ext_S^1(S/I,k)$ as the \emph{quasi-linear strand} of $S/I$.

\subsection*{The situation for Stanley--Reisner ideals} For the rest of this section we take $S$ to be the standard graded polynomial ring $k[v_1,\ldots,v_m]$. Let $K$ be a simplicial complex on the vertex set $[m]$ with Stanley--Reisner ring $k[K]=S/I_K$. 

By a well-known result of Eagon and Reiner, the graded $S$-module $I_K$ has a linear resolution if and only if $K^\vee$ is \emph{Cohen--Macaulay} over $k$, that is, $k[K^\vee]$ is a Cohen--Macaulay ring~\cite[Theorem~3]{ER}. Building on this work, Herzog and Hibi proved that $I_K$ is componentwise linear if and only if $K^\vee$ is \emph{sequentially Cohen--Macaulay} over $k$, that is, $k[K^\vee(q)]$ is a Cohen--Macaulay ring for every~$q$~\cite[Theorem 2.1]{HH}. A different generalisation of the Eagon--Reiner theorem due to Terai and Yanagawa states that $I_K$ satisfies condition $N_{d,p}$ if and only if $k[K^\vee]$ satisfies Serre's condition $(S_p)$ \cite[Corollary~3.7]{Y}.

By the results of \cite{CS} mentioned above, componentwise linear Stanley--Reisner ideals are strictly contained in the class of ideals $I_K$ that are quasi-Koszul $S$-modules. This larger class can be characterised combinatorially in terms of the Hochster formula. We use the following terminology, following Beben and Grbi\'c~\cite{BG}, who introduced an equivalent definition in the $k=\mathbb{Z}$ case.

\begin{definition} \label{def_HMF}
A simplicial complex $K$ on the vertex set $[m]$ is called a \emph{homology missing face complex} (or \emph{HMF complex}) over $k$ if, for each $U\subseteq[m]$, $\widetilde{H}_*(K_U;k)$ is generated as a $k$-module by missing faces of $K$, that is, the images in homology of the inclusions $K_I \hookrightarrow K_U$ for minimal non-faces $I\notin K$.  
\end{definition}

\begin{proposition} \label{prop_HMF}
Let $K$ be a simplicial complex. The Stanley--Reisner ideal $I_K$ is a quasi-Koszul $S$-module if and only if $K$ is an HMF complex over $k$. 
\end{proposition}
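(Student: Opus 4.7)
The plan is to apply Hochster's formula in both its $\Tor$ and $\Ext$ forms, and compare the result to \cref{def_HMF}. Recall that for each squarefree $\sigma \subseteq [m]$ there is a multigraded isomorphism
\[
\Tor^S_i(k[K],k)_\sigma \cong \widetilde{H}_{|\sigma|-i-1}(K_\sigma;k),
\]
whose $k$-dual, combined with the shift $\Ext^i_S(I_K,k) \cong \Ext^{i+1}_S(k[K],k)$ valid for $i\geqslant 0$, gives
\[
\Ext^i_S(I_K,k)_{-\sigma} \cong \widetilde{H}_{|\sigma|-i-2}(K_\sigma;k)^{*}.
\]
Since $K_I=\partial\Delta^{I}$ whenever $I\notin K$ is a minimal non-face, the multidegree $-I$ part of $\Ext^0_S(I_K,k)$ is one-dimensional precisely at the minimal non-faces and zero otherwise. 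Thus these classes form a basis of $\Ext^0_S(I_K,k)$, each dual to the fundamental class of the sphere $K_I$.

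The technical heart of the argument is to identify the Yoneda action of $v_i^{*}\in\Ext^1_S(k,k)$ on $\Ext_S(k[K],k)$ with a combinatorial operation on the Hochster decomposition. Using the Koszul resolution of $k$ over $S$, I would show that in multidegree $-\sigma$ the action of $v_i^{*}$ is zero whenever $i\notin\sigma$, and otherwise is the $k$-linear dual of the inclusion-induced map
\[
\widetilde{H}_{*}(K_{\sigma\smallsetminus\{i\}};k)\longrightarrow \widetilde{H}_{*}(K_\sigma;k).
\]
Indeed, tensoring the Koszul complex with $k[K]$ and restricting to multidegree $\sigma$ recovers (up to a degree shift) the augmented simplicial chain complex of $K_\sigma$, while left multiplication by $v_i$ on the Koszul side picks out the subcomplex of chains supported on faces contained in $\sigma\smallsetminus\{i\}$; dualising and passing to (co)homology yields the claim.

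Once this compatibility is in place, iterating the $\Ext^1_S(k,k)$-action (and using that $\Ext_S(k,k)$ is generated in degree one since $S$ is Koszul) identifies the quasi-linear strand of $I_K$ in multidegree $-U$ with the $k$-linear dual of the subspace of $\widetilde{H}_*(K_U;k)$ spanned by the images of the maps $\widetilde{H}_*(K_I;k)\to \widetilde{H}_*(K_U;k)$ induced by inclusion, as $I$ ranges over minimal non-faces of $K$ contained in $U$. Because any inclusion $K_J\hookrightarrow K_U$ with $J\notin K$ factors through $K_I$ for some minimal non-face $I\subseteq J$, this span is precisely the subspace appearing in \cref{def_HMF}. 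The equivalence between $I_K$ being quasi-Koszul and $K$ being an HMF complex over $k$ then follows by matching multidegrees and homological degrees.

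The main obstacle is the compatibility claim in the second paragraph: verifying that Yoneda multiplication by $v_i^{*}$ corresponds exactly (up to sign) to the face-inclusion map on reduced simplicial homology of full subcomplexes. The computation is standard but requires careful bookkeeping of signs in the Koszul complex model, and one should check that the description carries over uniformly to arbitrary characteristic.
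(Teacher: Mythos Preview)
Your approach is the same as the paper's: both arguments reduce the statement to identifying the Yoneda action of $\Ext^1_S(k,k)$ on $\Ext_S(k[K],k)$, under the Hochster decomposition, with the maps induced by inclusions of full subcomplexes. The paper accomplishes this by citing \cref{prop_homology_Z_K} (which packages the computation via the co-Koszul complex $R_*(K)$, a genuine dg $\Lambda$-module) together with \cref{rem_Hoch_strand}, so its proof is one sentence.

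There is, however, a direction error in your sketch. The functorial form of Hochster's formula is $\Tor^S_i(k[K],k)_\sigma \cong \widetilde{H}^{|\sigma|-i-1}(K_\sigma;k)$ with reduced simplicial \emph{cohomology} on the right; dualising then gives $\Ext^i_S(k[K],k)_{-\sigma} \cong \widetilde{H}_{|\sigma|-i-1}(K_\sigma;k)$ directly, without an extra dual. Under this identification, the Yoneda action of $v_i^*$ sends $\Ext^j_{-\sigma}$ to $\Ext^{j+1}_{-(\sigma\cup\{i\})}$ and vanishes when $i\in\sigma$ (not when $i\notin\sigma$ as you wrote); it is the inclusion-induced map $\widetilde{H}_*(K_\sigma;k)\to\widetilde{H}_*(K_{\sigma\cup\{i\}};k)$ itself rather than its dual. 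With these corrections the quasi-linear strand in degree $-U$ is literally the span of the images of $\widetilde{H}_*(K_I;k)\to\widetilde{H}_*(K_U;k)$, and the equivalence with the HMF condition is immediate. Your phrase ``the $k$-linear dual of the subspace spanned by the images'' does not define a subspace of $\Ext$ (the dual of a subspace is naturally a quotient), and this is a symptom of the same orientation issue. Working with the co-Koszul complex, as the paper does, is exactly what makes these directions come out cleanly.
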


\begin{proof}
This follows immediately from \cref{prop_homology_Z_K} below together with the Hochster formula interpretation of the quasi-linear strand given in \cref{rem_Hoch_strand}. 
\end{proof}

Although HMF complexes $K\neq \partial\Delta^{m-1}$ can essentially never be Gorenstein (cf.\ \cref{sec_manifolds}), we establish an analogue of \cref{prop_HMF} in the Gorenstein setting in \cref{prop_almost_HMF}.

\section{Homology of moment-angle complexes and the sweep action} \label{sec_MACs}

Throughout, $K$ continues to denote an abstract simplicial complex on the vertex set $[m]=\{1,\ldots,m\}$. We assume that $K$ has no ghost vertices; that is, $\{i\}\in K$ for all $i\in[m]$.

\subsection*{Moment-angle complexes and Davis--Januszkiewicz spaces}

For concreteness we write $D^2=\{z\in\mathbb{C} : |z| \leqslant 1\}$ and $S^1=\{z\in\mathbb{C} : |z| = 1\}$, both based at $1\in S^1\subseteq D^2$. The \emph{moment-angle complex} over $K$ is the subspace of $(D^2)^m$ defined by
\[
\mathcal{Z}_K = \bigcup_{I\in K} (D^2,S^1)^I\quad\text{where}\quad (D^2,S^1)^I=\left\{ (z_1,\ldots,z_m)\in (D^2)^m \,:\, z_i\in S^1\text{ if }i\notin I\right\}.
\]
The coordinatewise action of the torus $T^m=(S^1)^m$ on $(D^2)^m$ restricts to an action
\[
a\colon T^m\times\mathcal{Z}_K \to \mathcal{Z}_K, \quad \big((t_1,\ldots,t_m),(z_1,\ldots,z_m)\big) \mapsto (t_1z_1,\ldots,t_mz_m)
\]
called the standard torus action on $\mathcal{Z}_K$. The Borel construction (or homotopy quotient) of this action can be identified with the \emph{Davis--Januszkiewicz space}
\[
DJ_K = \bigcup_{I\in K} (\mathbb{C}P^\infty,*)^I\quad\text{where}\quad (\mathbb{C}P^\infty,*)^I=\left\{ (z_1,\ldots,z_m)\in (\mathbb{C}P^\infty)^m \,:\, z_i=*\text{ if }i\notin I\right\}.
\]
More precisely, $DJ_K$ is a subspace of the classifying space $BT^m=(\mathbb{C}P^\infty)^m$, and the inclusion has homotopy fibre $\mathcal{Z}_K$, yielding a homotopy fibration
\begin{equation} \label{fib}
\mathcal{Z}_K \stackrel{\omega}{\longrightarrow} DJ_K \longrightarrow BT^m
\end{equation}
which can be identified up to homotopy with the Borel fibration of the standard torus action on $\mathcal{Z}_K$; see~\cite[Theorem~4.3.2]{BP}.

The polydisk $(D^2)^m$ decomposes into cells indexed by pairs of disjoint subsets $I,J\subseteq [m]$:
\[
\varkappa(I,J) = \big\{ (z_1,\ldots,z_m) : z_i\in D^2\smallsetminus S^1 \text{ for } i\in 
I,\  z_j\in S^1\smallsetminus\{1\}\text{ for } j\in J,\ 
z_k=1 \text{ for } k\notin I\cup J \big\}.
\]
As described in~\cite[\S 4.4]{BP}, $\varkappa(I,J)$ is an open cell of dimension $2|I|+|J|$ and the moment-angle complex is the cellular subcomplex of $(D^2)^m$ consisting of those cells $\varkappa(I,J)$ with $I\in K$ and $I\cap J=\varnothing$.

Regarding $T^m$ as a cellular subcomplex of $(D^2)^m$ as well, the cellular chain complex $C_*^\mathrm{cw}(T^m;k)$ and $H_*(T^m;k)$ can both be identified with the exterior algebra $\Lambda=\Lambda(\lambda_1,\ldots,\lambda_m)$ with $|\lambda_i|=1$. Since the standard torus action $a\colon T^m\times\mathcal{Z}_K \to \mathcal{Z}_K$ is a cellular map with respect to these cellular structures, it yields an action of $\Lambda$ on $C_*^\mathrm{cw}(\mathcal{Z}_K;k)$, called the \emph{sweep action}, making $C_*^\mathrm{cw}(\mathcal{Z}_K;k)$ into a differential graded $\Lambda$-module, and inducing a graded $\Lambda$-module structure on $H_*(\mathcal{Z}_K;k)$.

We next describe a convenient algebraic model for the homology of $\mathcal{Z}_K$ with this $\Lambda$-module structure. We refer to~\cite[Section~3]{AB} for a thorough study of dg $\Lambda$-module models for~$\mathcal{Z}_K$.

\subsection*{The co-Koszul complex} While we are interested in the homological algebra of the Stanley--Reisner ring $k[K]=S/I_K$, it will be convienient to work with its graded dual, the \emph{Stanley--Reisner coalgebra} $k\langle K\rangle$, which can be more intrinsically defined as the $k$-linear span of symbols $v_\sigma$ indexed by multidegrees $\sigma=(\sigma_1,\ldots,\sigma_m)\in\mathbb{N}^m$ supported on faces of $K$ (less intrinsically, $v_\sigma $ is dual to the monomial $v^\sigma$ in $S$). To be precise,
\[
k\langle K\rangle = \operatorname{span}\Big\{v_\sigma\,:\, \{i\in [m] \,:\, \sigma_i\neq 0\}\in K\Big\},
\]
and $v_\sigma$ is given degree $|v_\sigma|=2\sum_{i=1}^m\sigma_i$.

The co-Koszul complex
\[
\big( k\langle K\rangle \otimes \Lambda(\lambda_1,\ldots,\lambda_m),\; \partial \big), \quad |\lambda_i|=1
\]
can be thought of as the linear dual of the usual Koszul complex of the Stanley--Reisner ring; its differential is given by
\[ 
\partial(v_\sigma\otimes \lambda_J) = \sum_{\sigma_i\neq 0}v_{\sigma-e_i}\otimes \lambda_i\wedge \lambda_J,
\]
where $\lambda_J=\lambda_{j_1}\wedge\cdots\wedge \lambda_{j_r}$ for $J=\{j_1<\cdots <j_r\}\subseteq [m]$, and $v_\varnothing=1$. 

Next, let $R_*(K)$ be the quotient of the co-Koszul complex by all elements $v_\sigma \lambda_J \coloneqq v_\sigma \otimes \lambda_J$ of non-squarefree multidegree. In particular, if we identify squarefree multidegrees $\sigma\in\{0,1\}^m$ with subsets $I=\{i\in [m] \,:\, \sigma_i=1\}$ of $[m]$ and write  $v_I=v_\sigma$, then
\[
R_*(K) = \operatorname{span}\{ v_I\lambda_J \,:\, I\in K,\, I\cap J=\varnothing \}.
\]
We introduce a bigrading on $R_*(K)$, defining $R^i(K)_U \subseteq R_{2|U|-i}(K)$ for each $i\geqslant 0$ and $U\subseteq [m]$ by setting
\begin{align*}
R^i(K)_U &= \big( k\langle K\rangle \otimes \Lambda^i(\lambda_1,\ldots,\lambda_m) \big)_U =\operatorname{span}\{ v_I\lambda_J \,:\, |J|=i,\, I\sqcup J=U \}.
\end{align*}
Note that $\partial(R_i(K))\subseteq R_{i-1}(K)$ while  $\partial(R^i(K)_U) \subseteq R^{i+1}(K)_U$.

We also point out that $R_*(K)$ inherits a dg $\Lambda$-module structure from the co-Koszul complex, explicitly given by
\begin{align*}
    \Lambda\otimes R_*(K) &\longrightarrow R_*(K) \\
    \lambda_j\otimes v_I\lambda_J\; &\longmapsto \begin{cases} v_I\otimes \lambda_j\wedge \lambda_J & \text{if } j\notin I \\ 0 & \text{if } j\in I. \end{cases}
\end{align*}
This structure models the sweep action on the homology of the moment-angle complex: there is a natural isomorphism of dg $\Lambda$-modules $R_*(K)\cong C^\mathrm{cw}_*(\mathcal{Z}_K;k)$; see~\cref{prop_homology_Z_K} below. 

\begin{remark}
Through Yoneda composition the cohomology $H^*(R_*(X))=\Ext_S(k[K],k)$ is naturally a left $\Lambda\cong \Ext_S(k,k)$-module, and this action coincides with the module structure induced from the just-defined  action of $\Lambda$ on $R_*(K)$. This can be seen as follows: the Koszul complex $\mathbb{K}= S\otimes \Lambda^*$ is a resolution of $k$ and a dg  $\Lambda$-module. There are quasi-isomorphisms of dg $\Lambda$-modules
\[
R_*(K)\twoheadleftarrow (k[K]\otimes_S \mathbb{K})^*={\rm Hom}_k(k[K]\otimes_S \mathbb{K},k) \cong {\rm Hom}_S(k[K], \mathbb{K}^*). 
\]
Moreover the action of $\Lambda$ on $\mathbb{K^*}$ induces an algebra isomorphism $\Lambda \xrightarrow{\cong} H^*({\rm End}_S(\mathbb{K}^*))= \Ext_S(k,k)$.
The compatibility of these isomorphisms establishes the claim of the remark.
\end{remark}

\subsection*{Hochster's decomposition and the sweep action} According to Hochster's formula~\cite{H} or~\cite[Theorem~3.2.9]{BP}, for each $U\subseteq [m]$ there is an isomorphism of chain complexes
\[ 
\begin{aligned}
R^*(K)_U &\longrightarrow \widetilde{C}_{|U|+*-1}(K_U;k) \\
v_I\lambda_{U\smallsetminus I} &\longmapsto (-1)^{\varepsilon(I,U)} I,
\end{aligned}
\]
where $\varepsilon(I,U)=|\{(i,u)\in I\times U : i>u\}|$, and $\widetilde{C}_{-1}(\varnothing;k)=k$ by convention. By summing over all $U\subseteq [m]$ and passing to homology we obtain the (dual of the) usual Hochster decomposition
\[ 
H_*(\mathcal{Z}_K;k) \cong \Ext_S(k[K],k) \cong \bigoplus_{U\subseteq [m]} \widetilde{H}_*(K_U;k).
\]
We next describe the $\Lambda(\lambda_1,\ldots,\lambda_m)$-module structure on $H_*(\mathcal{Z}_K)$ in terms of this decomposition. Define for each $j\in [m]\smallsetminus U$ the following map, induced by the inclusion $K_U \hookrightarrow K_{U\cup\{j\}}$ up to the indicated sign:
\begin{align*}
 \lambda_j\colon \widetilde{C}_n(K_U)  
&\longrightarrow\widetilde{C}_n(K_{U\cup\{j\}})\\
\qquad(I\in K_U) &\longmapsto (-1)^{\varepsilon(j,U)+n+1}I.
\end{align*}

When $j\in U$ we define the action of $\lambda_j$ to be zero on $\widetilde{C}_*(K_U;k)$. These are anti-commuting chain maps, yielding a dg $\Lambda=\Lambda(\lambda_1,\ldots,\lambda_m)$-module structure on $\bigoplus_{U\subseteq[m]}\widetilde{C}_*(K_U;k)$, and moreover they are compatible with the dg $\Lambda$-module structures already defined on $C_*^{\mathrm{cw}}(\mathcal{Z}_K;k)$ and $R_*(K)$; all of this is contained in the next proposition.

\begin{proposition} \label{prop_homology_Z_K}
Let $K$ be a simplicial complex on vertex set $[m]$. Then there are isomorphisms of differential graded $\Lambda(\lambda_1,\ldots,\lambda_m)$-modules
\begin{align*}
C_*^{\mathrm{cw}}(\mathcal{Z}_K;k) &\longleftarrow R_*(K) \longrightarrow \bigoplus_{U\subseteq [m]}\widetilde{C}_*(K_U;k) \\
C_{2|I|+|J|}^{\mathrm{cw}}(\mathcal{Z}_K;k) \ni\varkappa(I,J) &\longleftarrow\joinrel\mapstochar \;\,v_I\lambda_J\, \longmapsto (-1)^{\varepsilon(I,I\cup J)}I \in \widetilde{C}_{|I|-1}(K_{I\cup J};k)
\end{align*}
inducing isomorphisms of graded $\Lambda(\lambda_1,\ldots,\lambda_m)$-modules
\[
H_*(\mathcal{Z}_K;k) \cong \Ext_S(k[K],k) \cong \bigoplus_{U\subseteq [m]}\widetilde{H}_*(K_U;k).
\]
\end{proposition}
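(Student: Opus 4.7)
My approach is to establish both isomorphisms at the level of bases and then separately verify compatibility of differentials and of the $\Lambda$-action, using the middle object $R_*(K)$ as a combinatorial bridge between the geometry of $\mathcal{Z}_K$ and the Hochster decomposition. For the left-hand map $R_*(K)\to C_*^{\mathrm{cw}}(\mathcal{Z}_K;k)$, $v_I\lambda_J\mapsto \varkappa(I,J)$, bijectivity on bases is built into the cellular structure of $\mathcal{Z}_K\subseteq (D^2)^m$ recalled above: the cells are exactly the $\varkappa(I,J)$ with $I\in K$ and $I\cap J=\varnothing$, and $|\varkappa(I,J)|=2|I|+|J|=|v_I\lambda_J|$. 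To compare differentials I would compute the cellular boundary of $\varkappa(I,J)$ in $(D^2)^m$ by the product rule: each factor $i \in I$ contributes the cellular boundary $\partial(D^2\smallsetminus S^1)=\pm (S^1\smallsetminus\{1\})$ in position $i$, which altogether yields a sum of terms $\pm \varkappa(I\smallsetminus\{i\}, J\cup\{i\})$, precisely the terms of $\partial(v_I\lambda_J)=\sum_{i\in I}\pm v_{I\smallsetminus\{i\}}\lambda_i\wedge \lambda_J$. The non-squarefree terms of the full co-Koszul differential that were killed in the passage from $k\langle K\rangle \otimes \Lambda$ to $R_*(K)$ correspond exactly to boundary cells of $(D^2)^m$ lying outside $\mathcal{Z}_K$. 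For $\Lambda$-equivariance I would observe that $T^m\subseteq (D^2)^m$ is cellular with $|J|$-cell $\varkappa(\varnothing,J)$ representing $\lambda_J$, and the standard torus action sends $\varkappa(\varnothing,\{j\})\times \varkappa(I,J)$ onto $\varkappa(I,J\cup\{j\})$ when $j\notin I\cup J$ and into a lower-dimensional cell otherwise (which contributes zero on cellular chains); this recovers $\lambda_j\cdot v_I\lambda_J=v_I\lambda_j\wedge\lambda_J$.

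For the right-hand map I would first decompose $R_*(K)=\bigoplus_{U\subseteq[m]} R^*(K)_U$ by multidegree $U=I\sqcup J$, reducing to the construction of an isomorphism $R^*(K)_U\to \widetilde{C}_*(K_U;k)$ for each $U$, given on basis elements by $v_I\lambda_{U\smallsetminus I}\mapsto (-1)^{\varepsilon(I,U)} I$. Bijectivity is immediate because the basis $\{v_I\lambda_{U\smallsetminus I}:I\in K,\, I\subseteq U\}$ of $R^*(K)_U$ matches the simplex basis of $\widetilde{C}_*(K_U;k)$. For chain-map compatibility I would directly compare the co-Koszul differential with the simplicial boundary $\partial I=\sum_i\pm(I\smallsetminus\{i\})$; the sign $\varepsilon(I,U)=|\{(i,u)\in I\times U:i>u\}|$ is designed so that the wedge-product signs in the co-Koszul differential align with the alternating signs of the simplicial boundary. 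For $\Lambda$-equivariance, $\lambda_j\cdot v_I\lambda_{U\smallsetminus I}$ vanishes when $j\in U$ (the wedge has a repeated factor if $j\notin I$, and the prefactor is zero if $j\in I$) and equals $\pm v_I\lambda_{(U\cup\{j\})\smallsetminus I}$ in the summand indexed by $U\cup\{j\}$ otherwise; matching this with the sign-twisted inclusion $\widetilde{C}_*(K_U;k)\to \widetilde{C}_*(K_{U\cup\{j\}};k)$ defined immediately before the proposition reduces to the mod-$2$ identity $\varepsilon(I,U\cup\{j\})\equiv \varepsilon(I,U)+\varepsilon(j,U)+|I|\pmod 2$ for $j\notin U$, together with the comparison $\lambda_j\wedge \lambda_{U\smallsetminus I}=\pm\lambda_{(U\cup\{j\})\smallsetminus I}$.

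Conceptually the proof is routine, and the only real obstacle is coherent sign bookkeeping --- fixing a set of orientation conventions so that the Hochster sign $\varepsilon(I,U)$, the co-Koszul differential, the cellular boundary on $\mathcal{Z}_K$, and the sign-twisted simplicial inclusions that model the $\Lambda$-action all line up. Since the dg $\Lambda$-module models for $\mathcal{Z}_K$ are developed systematically in the authors' earlier paper \cite[\S 3]{AB}, and the cellular identifications of the first two complexes are already in place in \cite[\S 3.2, \S 4.4]{BP}, I would expect to reference those for the sign checks and supply only the additional verifications needed to identify the $\Lambda$-module structure on the third model $\bigoplus_U \widetilde{C}_*(K_U;k)$.
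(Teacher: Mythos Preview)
Your proposal is correct and follows essentially the same approach as the paper: both arguments identify the maps on bases, reduce the dg $\Lambda$-module compatibility to sign bookkeeping, and defer those sign checks to the cellular models in \cite[\S4.4]{BP} and the dg $\Lambda$-module models developed in \cite[\S3]{AB}. The paper's proof is simply a more compressed version of your outline, citing \cite[Lemma~3.3]{AB} (dualised) for the left-hand map, \cite[Theorem~3.2.9]{BP} for the right-hand chain isomorphism, and \cite[Lemma~3.5]{AB} (dualised) for the $\Lambda$-equivariance on the Hochster side.
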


\begin{proof} 
Beginning with the left-hand map, we observe that the inverse mapping $\varkappa(I,J) \mapsto v_I\lambda_J$ is precisely the $k$-linear dual of the natural isomorphism of dg $\Lambda$-modules established in~\cite[Lemma~3.3]{AB}. Likewise, that the right-hand map $v_I\lambda_J \mapsto (-1)^{\varepsilon(I,I\cup J)}I$ is an isomorphism of complexes is~\cite[Theorem~3.2.9]{BP}, and the fact that it is a map of $\Lambda$-modules follows by taking the linear duals of all complexes and maps in~\cite[Lemma~3.5]{AB}.
\end{proof}

\begin{remark} \label{rem_Hoch_strand}
Using \cref{prop_homology_Z_K}, the quasi-linear strand of $k[K]$ admits simple descriptions in terms of the Hochster formula and the co-Koszul complex. Under the isomorphism of $\Lambda$-modules $\Ext_S(k[K],k)\cong\bigoplus_{U\subseteq[m]}\widetilde{H}_*(K_U;k)$, the quasi-linear strand corresponds to the subspace of $\bigoplus_{U\subseteq[m]}\widetilde{H}_*(K_U;k)$ spanned by the images of the natural inclusion maps 
\[
\widetilde{H}_*(K_I;k)=\widetilde{H}_*(\partial\Delta^{|I|-1};k) \longrightarrow \widetilde{H}_*(K_{I\cup\{j_1,\ldots,j_r\}};k)
\]
for all minimal non-faces $I\notin K$ and $j_1,\ldots,j_r\in [m]\smallsetminus I$. In terms of the co-Koszul complex computing $\Ext_S(k[K],k)$, these generators of the quasi-linear strand are represented by cycles
\[
\left( \sum_{i\in I} v_{I\smallsetminus i}\lambda_i \right)\lambda_{j_1}\!\cdots \lambda_{j_r} \in R^{r+1}(K)_{I\cup\{j_1,\ldots,j_r\}}
\]
in $R_{2|I|+r-1}(K) \cong C^\mathrm{cw}_{2|I|+r-1}(\mathcal{Z}_K;k)$.
\end{remark}

\subsection*{Cohomology operations and the minimal resolution of the Stanley--Reisner ring} 

We finish this section by recalling a result from~\cite{AB} relating the exterior algebra action on the cohomology of a moment-angle complex $\mathcal{Z}_K$ to the linear part of the minimal free resolution of the corresponding Stanley--Reisner ring $k[K]$. Following the notation of~\cite{AB}, we consider the cohomology ring $H^*(\mathcal{Z}_K)$ as a nonnegatively graded module over an exterior algebra $\Lambda(\iota_1,\ldots,\iota_m)$ with generators of degree $-1$ by dualising the $\Lambda(\lambda_1,\ldots,\lambda_m)$-module structure in homology given by the sweep action. Then the maps $\iota_j\colon H^*(\mathcal{Z}_K)\to H^{*-1}(\mathcal{Z}_K)$ are derivations which are the primary operations $\delta_{\{j\}}=\iota_j$ in a system of \emph{higher cohomology operations} 
\[
\big\{\delta_J\in\operatorname{End}H^*(\mathcal{Z}_K) \,:\, J\subseteq[m]\big\}, \qquad |\delta_J|=1-2|J|,
\]
in the sense of~Goresky--Kottwitz--MacPherson~\cite{GKM}, induced by the $T^m$-action on $\mathcal{Z}_K$. With respect to the $\mathbb{Z}\times\mathbb{Z}^m$-grading on 
\[
H^*(\mathcal{Z}_K;k)=\bigoplus_{2|U|-i=*}H^{-i,U}(\mathcal{Z}_K;k), \qquad H^{-i,U}(\mathcal{Z}_K;k)\cong\Tor^S_{i}(k[K],k)_U
\]
(cf.~\cite[Theorem~4.5.7]{BP}), each $\delta_J$ defines maps $H^{-i,U}(\mathcal{Z}_K;k) \to H^{-i+1,U\smallsetminus J}(\mathcal{Z}_K;k)$.

\begin{theorem}[{\cite[Theorem~A]{AB}}] \label{thm_min_res}
The minimal graded free resolution of the Stanley--Reisner ring $k[K]$ is isomorphic to
\[
\big(S\otimes_k H^{*,*}(\mathcal{Z}_K;k),\; \delta\big), \qquad \delta=\sum_{J\subseteq[m]} v_J\otimes\delta_J. 
\]
In particular, the linear part of the resolution is given by
$\left(S\otimes_k H^{*,*}(\mathcal{Z}_K;k),\; \delta^\mathrm{lin}\right)$ with differential
\[
\pushQED{\qed} 
\delta^\mathrm{lin}=\sum_{j=1}^m v_j\otimes\iota_j. \qedhere
\popQED
\]
\end{theorem}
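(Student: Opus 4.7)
My plan would be to bootstrap from the well-known graded isomorphism $H^{*,*}(\mathcal{Z}_K;k)\cong \Tor^S(k[K],k)$ and recover the entire differential of the minimal resolution from the higher torus-equivariant cohomology operations $\delta_J$.

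First I would identify the underlying bigraded free module. Minimality forces $F_i/\fm F_i\cong \Tor^S_i(k[K],k)$ so, using the multigraded form of the Hochster decomposition from \cref{prop_homology_Z_K}, we may take $F_i=S\otimes_k\bigoplus_U H^{-i,U}(\mathcal{Z}_K;k)$, and collecting over $i$ yields $F=S\otimes_k H^{*,*}(\mathcal{Z}_K;k)$. The differential lands in $\fm F$, so writing each matrix entry as a sum over monomials $v_J$ produces an expansion $\delta=\sum_{J\subseteq[m]} v_J\otimes \delta_J$ where each $\delta_J$ is a $k$-linear endomorphism of $H^{*,*}(\mathcal{Z}_K;k)$ of bidegree $(1,-J)$. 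The remaining problem is to identify these unknown operators $\delta_J$ with the GKM-type cohomology operations of the torus action.

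Second I would run a homological perturbation / transfer-of-structure argument. The dualised Koszul complex $\mathbb{K}^*=S\otimes \Lambda^*$ is an injective resolution of $k$ over $S$, and the complex $\mathrm{Hom}_S(k[K],\mathbb{K}^*)$ computes $\Ext_S(k[K],k)$; by the remark after \cref{prop_homology_Z_K} this complex is quasi-isomorphic, as a dg $\Lambda$-module, to $R_*(K)$, which itself is the cellular chain model of $\mathcal{Z}_K$ carrying the sweep action. Choosing a contraction of $R_*(K)$ onto its cohomology $H^{*,*}(\mathcal{Z}_K;k)$ and dualising transfers an $A_\infty$-type structure to the latter. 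Packaging these transferred operations into an $S$-linear differential on $S\otimes_k H^{*,*}(\mathcal{Z}_K;k)$ produces a minimal free resolution of $k[K]$ in which the operator acting in monomial multidegree $v_J$ is exactly the transferred operation indexed by $J$.

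Third I would match the transferred operations with the topological $\delta_J$. For $|J|=1$ the transferred operation is by construction dual to the action of $\lambda_j$ on $R_*(K)$, which by definition of the sweep action is $\iota_j$; this gives $\delta^{\mathrm{lin}}=\sum_j v_j\otimes\iota_j$ and recovers the linear part. For $|J|\geqslant 2$, the higher transferred operations are compatible with the Borel fibration \eqref{fib}: they are obtained by lifting cocycles through the trivialising contraction and reading off the $v_J$-component of the obstruction in the Cartan model, and this is precisely the topological construction of the GKM higher cohomology operations $\delta_J$ from the $T^m$-action on $\mathcal{Z}_K$.

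The hardest step will be the compatibility verification in the third paragraph: showing that the algebraically transferred $A_\infty$-operations agree, with the correct signs and grading shifts, with the topologically defined higher operations $\delta_J$. This requires comparing two a priori different models — one obtained from homological perturbation on $R_*(K)$, the other from obstruction theory on the Borel construction — and is the technical heart of the proof in \cite{AB}. Once this identification is made, minimality is automatic because $\delta$ takes values in $\fm F$ by construction.
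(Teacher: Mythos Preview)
The paper does not contain a proof of this theorem: it is quoted verbatim from the authors' earlier work \cite{AB} and closed with a \textsf{\qedsymbol} inside the statement itself, signalling that no proof follows. There is therefore nothing in this paper to compare your proposal against.

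That said, your sketch is a reasonable outline of the argument actually carried out in \cite{AB}: one identifies the underlying free module via $\Tor^S(k[K],k)$, transfers a differential onto $S\otimes_k H^{*,*}(\mathcal{Z}_K;k)$ by a homological perturbation on a Koszul-type model, and then identifies the resulting monomial-indexed operations with the topological $\delta_J$. Your assessment that the third step---matching the algebraically transferred operations with the equivariant cohomology operations---is the technical crux is accurate. But none of this appears in the present paper; here \cref{thm_min_res} is used as a black box, with only the consequence about the linear part (via \cref{rem_linear_part_action}) being exploited.
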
 

\begin{remark}\label{rem_linear_part_action}
It follows from \cref{thm_min_res} that the data of $H_*(\mathcal{Z}_K;k)$ as a $\Lambda$-module is equivalent to the data of the linear part of the minimal free resolution of $k[K]$. Interpreted in terms of the Bernstein--Gel'fand--Gel'fand (BGG) correspondence, the second part of the theorem states that the BGG dual of the dg $\Lambda$-module $H_*(\mathcal{Z}_K;k)$ (with zero differential) is precisely the linear part of the minimal free resolution of $k[K]$.
\end{remark}

\begin{lemma} \label{lem_Kos_qKos}
If $I_K$ is a Koszul $S$-module, then $I_K$ is a quasi-Koszul $S$-module.
\end{lemma}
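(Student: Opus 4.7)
The plan is to reformulate the quasi-Koszul condition in terms of the linear part $\lin(F)$ of the minimal free resolution $F \to I_K$, and then invoke the defining acyclicity of $\lin(F)$ for a Koszul module.

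Since $S$ is Koszul, $\Lambda = \Ext_S(k,k)$ is generated in degree one, so $I_K$ is quasi-Koszul exactly when the Yoneda multiplication map $\Lambda^1 \otimes \Ext^i_S(I_K,k) \to \Ext^{i+1}_S(I_K,k)$ is surjective for every $i \geq 0$. Applying the BGG-type identification underlying \cref{thm_min_res} (see also \cref{rem_linear_part_action}) to $I_K$, the Yoneda action of each $\lambda_k \in \Lambda^1$ on a class $\xi \in \Ext^i_S(I_K,k) = (F_i \otimes_S k)^\vee$ is given by precomposing $\xi$ with the $v_k$-coefficient map $F_{i+1} \otimes_S k \to F_i \otimes_S k$ read off from $d^{\lin}$. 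Since $v_1,\ldots,v_m$ are linearly independent in $S$, a short dualization shows that this multiplication is surjective at step $i$ precisely when the restriction of $d^{\lin}$ to the space of minimal generators in homological degree $i+1$ is injective.

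With this reformulation, Koszulity closes the argument immediately: any minimal generator $e$ of $F_{i+1}$ sent to zero by $d^{\lin}$ is a cycle of $\lin(F)$ in positive homological degree, hence a boundary $e = d^{\lin}(e')$ by acyclicity; but $d^{\lin}(e')$ lies in $\fm F_{i+1}$ and therefore cannot be a nonzero minimal generator, forcing $e = 0$.

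The only nontrivial step is the BGG-type translation of quasi-Koszulity into injectivity of $d^{\lin}$ on minimal generators; once this is in place, the defining acyclicity of $\lin(F)$ yields the conclusion at once.
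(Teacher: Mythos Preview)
Your proposal is correct and follows essentially the same approach as the paper's proof. Both arguments reduce quasi-Koszulity to showing that $d^{\lin}$ (equivalently, the paper's $\delta^{\lin}$) does not kill any minimal generator in positive homological degree, and both deduce this from the combination of minimality (generators are not boundaries) and the acyclicity of $\lin(F)$ (cycles are boundaries). The only cosmetic difference is that you work abstractly with the minimal resolution $F\to I_K$, whereas the paper phrases everything through the explicit topological model $S\otimes_k H^{*,*}(\mathcal{Z}_K;k)$ of \cref{thm_min_res} and the operations $\iota_j$; the underlying logic is identical.
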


\begin{proof}
Since the minimal free resolutions of $I_K$ and $k[K]=S/I_K$ only differ by a degree shift, the assumption and \cref{thm_min_res} imply that $\left(S\otimes_k H^{*,*}(\mathcal{Z}_K;k),\; \delta^\mathrm{lin}\right)$ is exact above homological degree one.  Let $\alpha \in H^{-i,*}(\mathcal{Z}_K;k)$ with $i>1$, and note that $1\otimes\alpha \in S\otimes_k H^{*,*}(\mathcal{Z}_K;k)$ is not in the image of $\delta^\mathrm{lin}$ by minimality ($\operatorname{im}\delta\subset \mathfrak{m}\otimes_k H^*(\mathcal{Z}_K;k)$). Therefore $1\otimes\alpha \notin \ker\delta^\mathrm{lin}$ by exactness, so $\iota_j(\alpha)$ is nonzero for some $j\in[m]$. Dualising, since $\iota_j$ is dual to $\lambda_j\colon H_*(\mathcal{Z}_K;k)\to H_{*+1}(\mathcal{Z}_K;k)$, this shows that every class in $H_{-i,*}(\mathcal{Z}_K;k) \cong \Ext_S^i(k[K],k)$ with $i>1$ is in the span of the images of the $\lambda_j$, and hence contained in the quasi-linear strand of $k[K]$. Therefore $I_K$ is quasi-Koszul.
\end{proof}

\section{The Hurewicz image and the quasi-linear strand} \label{sec_hurewicz}

\subsection*{The sweep action on the Hurewicz image}

The $i^\text{th}$ coordinate circle action $a_i\colon S^1\times \mathcal{Z}_K \to \mathcal{Z}_K$ is obtained by restricting the standard action $a\colon T^m\times\mathcal{Z}_K\to\mathcal{Z}_K$. Since we assume that $\{i\}\in K$ for all $i\in[m]$, there is a  canonical extension of $a_i$ given by
\[
\bar{a}_i\colon \mathcal{Z}_K\times S^1 \cup *\times D^2 \longrightarrow \mathcal{Z}_K, \quad \big((z_1,\ldots,z_m),z\big) \mapsto (z_1,\ldots,z_iz,\ldots,z_m). 
\]
Recall that the \emph{half-smash product} $X\ltimes Y$ of pointed spaces $X$ and $Y$ is defined as the quotient space $ X\ltimes Y = {X\times Y}/{X\times *}$. Since the inclusion $S^1\times * \to S^1\times\mathcal{Z}_K$ is a cofibration,  the half-smash product $S^1\ltimes\mathcal{Z}_K$ is homotopy equivalent to the mapping cone of $S^1\times * \to S^1\times\mathcal{Z}_K$. In other words, there is a homotopy equivalence
\[
S^1\ltimes\mathcal{Z}_K\simeq \mathcal{Z}_K\times S^1 \cup *\times D^2.
\]
Making this identification, we note that $\bar{a}_i$ represents the (unique up to homotopy) extension of $a_i$ from $S^1\times\mathcal{Z}_K$ to $S^1\ltimes\mathcal{Z}_K$.

By definition the Davis--Januszkiewicz space $DJ_K$ is a subspace of the cartesian product $(\mathbb{C}P^\infty)^m$ containing the $m$-fold wedge $(\mathbb{C}P^\infty)^{\vee m}$ (see \cref{sec_MACs}). For each $i\in[m]$, let
\[
\mu_i\colon S^2 \longrightarrow (\mathbb{C}P^\infty)^{\vee m} \lhook\joinrel\longrightarrow DJ_K 
\]
denote the canonical generator of  $\pi_2(DJ_K)\cong\mathbb{Z}^m$ defined using the inclusion of the bottom cell of the $i^\text{th}$ wedge summand of $(\mathbb{C}P^\infty)^{\vee m}$.

\begin{lemma} \label{comm_square}
With the notation just introduced, there is a homotopy commutative diagram
\[
\begin{tikzcd}
\mathcal{Z}_K\times S^1 \cup *\times D^2 \ar[r,"\bar{a}_i"] \ar[d,"\omega\times q"] & \mathcal{Z}_K \ar[d,"\omega"] \\
DJ_K\vee S^2 \ar[r,"1\vee\mu_i"] & DJ_K,
\end{tikzcd}
\]
where $q\colon D^2\to S^2$ is the pinch map collapsing $S^1$ to a point. 
\end{lemma}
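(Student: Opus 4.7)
The strategy I would follow is to use the Borel construction description $DJ_K \simeq ET^m \times_{T^m} \mathcal{Z}_K$, in which $\omega$ has the explicit form $\omega(z) = [e_0, z]$ for a fixed basepoint $e_0 \in ET^m$. In this model, $\omega \circ \bar a_i$ sends $(z,\theta) \in \mathcal{Z}_K \times S^1$ to $[e_0 \cdot \theta_i, z]$, where $\theta_i = (1,\ldots,\theta,\ldots,1) \in T^m$, and sends $(*,w) \in * \times D^2$ to $[e_0, \iota_i(w)]$, where $\iota_i \colon D^2 \to \mathcal{Z}_K$ is the inclusion of the $i$-th coordinate disk. By contrast, $(1 \vee \mu_i) \circ (\omega \times q)$ simplifies to $[e_0, z] = \omega(z)$ on $\mathcal{Z}_K \times S^1$ and to $\mu_i \circ q$ on $* \times D^2$.

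The homotopy would be produced in two stages. First, on $\mathcal{Z}_K \times S^1$, contractibility of $ET^m$ furnishes a continuous family of paths $\gamma_\theta \colon [0,1] \to ET^m$ with $\gamma_\theta(0) = e_0$ and $\gamma_\theta(1) = e_0 \cdot \theta_i$, continuous in $\theta \in S^1$---equivalently, a null-homotopy of the $i$-th circle loop inside $ET^m$. The formula $H(s, z, \theta) = [\gamma_\theta(s), z]$ then defines a homotopy on $\mathcal{Z}_K \times S^1$ from $\omega \circ \operatorname{pr}_1$ at $s = 0$ to $\omega \circ a_i$ at $s = 1$. This geometrically realises the fact that the $T^m$-action on $\mathcal{Z}_K$ becomes homotopically trivial in the Borel quotient $DJ_K$.

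Second, I would extend $H$ over $(* \times D^2) \times [0,1]$. Restricted to $* \in \mathcal{Z}_K$, the above $H$ gives a map $[0,1] \times S^1 \to DJ_K$ whose two boundary circles are constant at the basepoint (using that $[e, *]$ only depends on $e \bmod T^m$, and $\gamma_\theta(0) = e_0$, $\gamma_\theta(1) \bmod T^m = [e_0]$), and hence factors through an $S^2$. By construction this $S^2$ represents the canonical generator $\mu_i \in \pi_2(BT^m) \subseteq \pi_2(DJ_K)$ via the section $BT^m \to DJ_K$ of the Borel fibration. The two maps on $* \times D^2$ to be matched ($\mu_i \circ q$ and $\omega \circ \iota_i$) both represent this same class in $\pi_2(DJ_K)$, so the boundary $S^2$ of the 3-ball $(* \times D^2) \times [0,1]$ maps to $DJ_K$ with trivial total class, allowing the extension to exist. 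The main technical obstacle is keeping track of orientations so that the three contributions (two disk caps and one side cylinder) to this boundary class cancel; this reduces to the standard identification of the connecting homomorphism $\pi_2(BT^m) \to \pi_1(T^m)$ as the canonical isomorphism, together with the observation that both null-homotopies of the $i$-th loop (through $\iota_i$ inside $\mathcal{Z}_K$ and through $\gamma$ inside $ET^m$) realise $\mu_i$ with the same sign.
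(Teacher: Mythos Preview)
Your overall strategy—verify commutativity on $\mathcal{Z}_K \times S^1$ and on $* \times D^2$ separately and then glue—is the same as the paper's, and the homotopy $H(s,z,\theta)=[\gamma_\theta(s),z]$ on $\mathcal{Z}_K \times S^1$ is fine. But the extension step contains a genuine error. You assert that ``$[e,*]$ only depends on $e \bmod T^m$''; this would require the basepoint $*=(1,\ldots,1)\in\mathcal{Z}_K$ to be fixed by the torus action, and it is not: $\theta_i\cdot * = (1,\ldots,\theta,\ldots,1)$. Consequently the circle $\theta\mapsto H(1,*,\theta)=[e_0\cdot\theta_i,*]=[e_0,(1,\ldots,\theta,\ldots,1)]$ is \emph{not} constant in the Borel model---it is the image under $\omega$ of the $i$th coordinate circle in $\mathcal{Z}_K$. (Relatedly, there is no section $BT^m\to DJ_K$ of the Borel fibration; the identification $\pi_2(DJ_K)\cong\pi_2(BT^m)$ comes from the projection and the $2$-connectedness of $\mathcal{Z}_K$.) Your obstruction argument can be repaired---the cylinder still glues to the two disk caps along their boundary circles, and the resulting class in $\pi_2(DJ_K)$ does vanish---but the justification you give rests on the false fixed-point claim and would have to be redone, including the sign check you already flag as delicate.

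The paper avoids this entirely by using the concrete model for $\omega$ induced coordinatewise by the pair map $(D^2,S^1)\to(\mathbb{C}P^\infty,*)$, i.e.\ $D^2\xrightarrow{q}S^2\hookrightarrow\mathbb{C}P^\infty$. In that model one has the \emph{strict} equality $\omega\circ\iota_i=\mu_i\circ q$ on $*\times D^2$, so the diagram commutes on the nose there; the homotopy on $\mathcal{Z}_K\times S^1$ (from the principal fibration $T^m\to\mathcal{Z}_K\to DJ_K$) is then extended to all of $\mathcal{Z}_K\times S^1\cup *\times D^2$ by a single application of the homotopy extension property. This sidesteps any obstruction computation or orientation bookkeeping.
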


\begin{proof}
Since $T^m\to \mathcal{Z}_K\xrightarrow{\omega} DJ_K$ is a principal fibration (with classifying map the inclusion $DJ_K\to \prod_{i=1}^m\mathbb{C}P^\infty=BT^m$), the action of $T^m$ on $\mathcal{Z}_K$ fits into a homotopy commutative diagram
\[
\begin{tikzcd}
T^m\times\mathcal{Z}_K \ar[r,"a"] \ar[d,"\pi_2"] & \mathcal{Z}_K \ar[d,"\omega"] \\
\mathcal{Z}_K \ar[r,"\omega"] & DJ_K.
\end{tikzcd}
\]
It follows that the diagram in the statement of the lemma commutes up to homotopy on $\mathcal{Z}_K\times S^1$ since the composite $\mathcal{Z}_K\times S^1 \xrightarrow{\omega\times q} DJ_K\vee S^2 \xrightarrow{1\vee\mu_i} DJ_K$ is equal to the composite $\mathcal{Z}_K\times S^1 \xrightarrow{\pi_1} \mathcal{Z}_K \xrightarrow{\omega} DJ_K$. The diagram strictly commutes on $*\times D^2$ since $\bar{a}_i|_{*\times D^2}$ coincides with the $i^\text{th}$ coordinate inclusion $D^2 \to \mathcal{Z}_K\subseteq \prod_{i=1}^m D^2$, and $\omega$ is the map induced by $D^2 \stackrel{q}{\longrightarrow} S^2\lhook\joinrel\longrightarrow \mathbb{C}P^\infty$. By the homotopy extension property, a homotopy realising the homotopy commutativity of the restriction of the diagram to $\mathcal{Z}_K\times S^1$ can be extended to $\mathcal{Z}_K\times S^1 \cup *\times D^2$.
\end{proof}

For a pointed topological space $X$, we denote the Hurewicz map by $h\colon \pi_n(X)\to H_n(X)$.

\begin{theorem} \label{thm_Hurewicz_equiv}
Let $K$ be a simplicial complex on the vertex set $[m]$. If $f\in \pi_n(\mathcal{Z}_K)$ has Hurewicz image $h(f) \in H_n(\mathcal{Z}_K)$, then 
\[ 
\lambda_ih(f) = h\big([\overline{\mu_i,\omega\circ f}]\big) \in H_{n+1}(\mathcal{Z}_K) 
\]
for all $i\in[m]$, where $[\overline{\mu_i,\omega\circ f}]$ is the unique lift of the Whitehead product $[\mu_i,\omega\circ f] \in \pi_{n+1}(DJ_K)$.
In particular, the Hurewicz image of $\mathcal{Z}_K$ is closed under the sweep action.
\end{theorem}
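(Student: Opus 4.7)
The plan is to exhibit an explicit map whose homotopy class lifts $[\mu_i,\omega\circ f]$ to $\mathcal{Z}_K$ and whose Hurewicz image is $\lambda_i h(f)$. Since $\{i\}\in K$, the canonical extension $\bar a_i\colon S^1\ltimes\mathcal{Z}_K\to \mathcal{Z}_K$ is already set up, so given $f\colon S^n\to\mathcal{Z}_K$ one first forms the composite $\tilde f=\bar a_i\circ (1\ltimes f)\colon S^1\ltimes S^n\to \mathcal{Z}_K$. The cofiber sequence $S^n\hookrightarrow S^1\ltimes S^n\to \Sigma S^n$ splits (the attaching map of the top cell of $S^1\times S^n$ is the Whitehead product $[\iota_1,\iota_n]$, which becomes null after collapsing the $S^1$ summand), so $S^1\ltimes S^n\simeq S^n\vee S^{n+1}$ and $\tilde f$ restricted to the top-cell wedge summand defines an element $\tilde f'\in\pi_{n+1}(\mathcal{Z}_K)$.

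The Hurewicz computation is direct: $H_{n+1}(S^1\ltimes S^n)\cong\mathbb{Z}$ is generated by a class $\xi$ that pulls back, along the quotient $S^1\times S^n\to S^1\ltimes S^n$, to the cross product $[S^1]\times[S^n]$. Since $\bar a_i$ agrees with $a_i$ on $S^1\times\mathcal{Z}_K$, naturality of the Hurewicz map yields
\[
h(\tilde f')=\tilde f_*(\xi)=(a_i)_*\bigl([S^1]\otimes h(f)\bigr),
\]
which is exactly $\lambda_i h(f)$ by the definition of the sweep action.

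To identify $\tilde f'$ as a lift of the Whitehead product, I would apply \cref{comm_square}, which gives $\omega\circ\tilde f\simeq (1\vee\mu_i)\circ(\omega\times q)\circ (1\ltimes f)$. Unpacking on the cofiber model $(\mathcal{Z}_K\times S^1)\cup_{S^1} D^2$ for $S^1\ltimes\mathcal{Z}_K$, this composite factors as $(\omega f\vee\mu_i)\circ j$, where $j\colon S^1\ltimes S^n\to S^n\vee S^2$ sends $S^n\times S^1$ to $S^n$ by projection on the first factor and the attached $D^2$ cell to $S^2$ via the pinch map $q$. The key geometric input is that the restriction of $j$ to the top-cell summand $S^{n+1}$ represents the Whitehead attaching map of the top cell of $S^n\times S^2$; intuitively, the half-smash $S^1\ltimes S^n$ is the universal carrier for a single Whitehead commutator of $\iota_n$ against an element in the image of $\Sigma\iota_{S^1}$. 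Granting this, $\omega\circ\tilde f'$ equals $[\mu_i,\omega\circ f]$ in $\pi_{n+1}(DJ_K)$ (up to sign conventions that can be absorbed into the orientation of $\xi$).

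To conclude, the long exact sequence of \eqref{fib} combined with $\pi_k(BT^m)=0$ for $k\neq 2$ shows that for $n\geqslant 1$ any class in $\pi_{n+1}(DJ_K)$ projecting trivially to $BT^m$ lifts uniquely to $\pi_{n+1}(\mathcal{Z}_K)$, so $\tilde f'=[\overline{\mu_i,\omega\circ f}]$ and $h([\overline{\mu_i,\omega\circ f}])=\lambda_i h(f)$ as required. The \emph{in particular} statement is then immediate, since $\lambda_i h(f)$ has been exhibited as a Hurewicz image. The main technical obstacle I anticipate is the identification of $j|_{S^{n+1}}$ with the Whitehead attaching map; this should follow from a direct comparison of the cofiber sequence $S^n\to S^1\ltimes S^n\to \Sigma S^n$ with the skeletal filtration of $S^n\times S^2$, or equivalently by matching the evident fold of $(S^1\times D^n)\cup D^2$ against the standard cell-attaching map of the top cell of $S^n\times S^2$.
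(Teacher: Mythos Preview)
Your approach is essentially the same as the paper's: both construct the lift via $\bar a_i\circ(f\times 1)$ on the half-smash model $(S^n\times S^1)\cup(*\times D^2)\simeq S^n\vee S^{n+1}$, compute the Hurewicz image of the $S^{n+1}$ summand as $(a_i)_*([S^1]\otimes h(f))=\lambda_i h(f)$, and use \cref{comm_square} to identify $\omega\circ\tilde f'$ with $[\mu_i,\omega\circ f]$. The one step you flag as a technical obstacle---that $j|_{S^{n+1}}$ represents the Whitehead attaching map---is precisely what the paper supplies, via the well-known factorisation of the universal Whitehead product $[\iota_n,\iota_2]$ as $\partial(D^n\times D^2)=(D^n\times S^1)\cup(S^{n-1}\times D^2)\xrightarrow{q\times q} S^n\vee S^2$, which factors through $(S^n\times S^1)\cup(*\times D^2)$ by $q\times 1$ followed by your map $j=1\times q$.
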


\begin{proof}
Let $f\in \pi_n(\mathcal{Z}_K)$ and let $i\in [m]$. We will construct an explicit representative of the unique lift $[\overline{\mu_i,\omega\circ f}] \in \pi_{n+1}(\mathcal{Z}_K)$ of $[\mu_i,\omega\circ f] \in \pi_{n+1}(DJ_K)$ through the map $\omega\colon \mathcal{Z}_K \to DJ_K$. It is well known that the universal Whitehead product $[\iota_n,\iota_2]\colon S^{n+1} \to S^n\vee S^2$ factors as a composite
\[ S^{n+1}\cong \partial(D^n\times D^2)= (D^n\times S^1)\cup (S^{n-1}\times D^2) \xrightarrow{q\times q} (S^n\times *)\cup(*\times S^2)=S^n\vee S^2, \]
where $q$ is the pinch map collapsing the boundary of a disk to a point. We therefore have a homotopy commutative diagram
\[
\begin{tikzcd}[row sep=large]
(D^n\times S^1)\cup (S^{n-1}\times D^2) \ar[r,"q\times 1"] \ar[d,transform canvas={xshift=0.2ex},-] \ar[d,transform canvas={xshift=-0.4ex},-] & (S^n\times S^1)\cup (*\times D^2) \ar[r,"f\times 1"] \ar[d,"1\times q"] & (\mathcal{Z}_K\times S^1)\cup (*\times D^2) \ar[r,"\bar{a}_i"] \ar[d,"\omega\times q"] & [-5pt] \mathcal{Z}_K \ar[d,"\omega"] \\
\partial(D^n\times D^2)\cong S^{n+1} \ar[r,"{[\iota_n,\iota_2]}"] & S^n\vee S^2 \ar[r,"(\omega\circ f)\vee 1"] & DJ_K\vee S^2 \ar[r,"1\vee\mu_i"] & DJ_K
\end{tikzcd}
\]
where the square on the right commutes up to homotopy by~\cref{comm_square}. The composite along the bottom row is $[\omega\circ f,\mu_i]=[\mu_i,\omega\circ f]$ by naturality of the Whitehead product. The composite along the top row is therefore the unique lift $[\overline{\mu_i,\omega\circ f}]$ through $\omega$. We next use this factorisation to compute the Hurewicz image of $[\overline{\mu_i,\omega\circ f}]$.

Viewing $(S^n\times S^1)\cup(*\times D^2)$ as a pushout over $*\times S^1$, it is clearly homotopy equivalent to the cofibre of the inclusion $S^1 \to S^n\times S^1$. We therefore have
\[
(S^n\times S^1)\cup(*\times D^2) \simeq S^n \rtimes S^1 \simeq S^n\vee S^{n+1}
\]
(see \cite[Proposition~7.7.8]{S} for the second equivalence), and it is straightforward to check that the first map $q\times 1$ along the top row of the diagram is given by the inclusion $S^{n+1} \to S^n\vee S^{n+1}$ of the second wedge summand up to homotopy. The second map $f\times 1$ along the top row can be identified up to homotopy with the map $f\rtimes 1\colon S^n\rtimes S^1 \to \mathcal{Z}_K\rtimes S^1$. The Hurewicz image of the composite $(f\times 1)\circ(q\times 1)$ is therefore $h(f)\otimes [S^1] \in H_{n+1}(\mathcal{Z}_K\rtimes S^1)$, where $[S^1]$ denotes the fundamental class of $S^1$ and, by abuse of notation, $h(f)\otimes [S^1]$ denotes the image of this class under the quotient map $\mathcal{Z}_K\times S^1 \to \mathcal{Z}_K\rtimes S^1$. Lastly, 
\[
(\bar{a}_i)_*\big(h(f)\otimes [S^1]\big)=(a_i)_*\big(h(f)\otimes [S^1]\big)=\lambda_ih(f)
\] 
by definition of the sweep action $\Lambda(\lambda_1,\ldots,\lambda_m) \otimes H_*(\mathcal{Z}_K) \to H_*(\mathcal{Z}_K)$ induced by the standard $T^m$-action on $\mathcal{Z}_K$. This proves the desired equality.
\end{proof}

\begin{remark}
We point out that the Hurewicz image is not generally invariant under the sweep action for arbitrary torus actions, as $T^2$ acting on itself shows. This example also illustrates the necessity of our global assumption that $K$ has no ghost vertices (since $\mathcal{Z}_K=T^2$ for the irrelevant complex $K=\{\varnothing\}$ on the vertex set $[2]$).
\end{remark}

\subsection*{Sphericity of the quasi-linear strand} 

Recall from \cref{def_strand} that the quasi-linear strand of the Stanley--Reisner ring $k[K]$ is the $\Ext_S(k,k)$-submodule of $\Ext_S(k[K],k)$ generated by the cohomological degree one part $\Ext_S^1(k[K],k)$. Using \cref{prop_homology_Z_K}, we identify this with the $\Lambda$-submodule of $H_*(\mathcal{Z}_K;k)$ generated by $H_{-1,*}(\mathcal{Z}_K;k)$. 

It follows from \cref{thm_Hurewicz_equiv} that every homology class in the quasi-linear strand is spherical: 

\begin{theorem} \label{thm_Hurewicz_strand}
The quasi-linear strand of $k[K]$ is contained in (the $k$-linear span of) the image of the Hurewicz map \[ h\colon \pi_*(\mathcal{Z}_K)\longrightarrow H_*(\mathcal{Z}_K;k). \]
\end{theorem}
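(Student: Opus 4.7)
The plan is to exploit the concrete description of the quasi-linear strand provided by \cref{rem_Hoch_strand}, and then propagate sphericity from certain natural generators using \cref{thm_Hurewicz_equiv}.

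First I would use \cref{rem_Hoch_strand} to reduce the theorem to the following statement: every class of the form $\lambda_{j_1} \cdots \lambda_{j_r} \alpha_I$ lies in the Hurewicz image, where $I \notin K$ is a minimal non-face, $\alpha_I$ is a generator of the top homology $\widetilde{H}_{|I|-2}(K_I; k) = \widetilde{H}_{|I|-2}(\partial\Delta^{|I|-1}; k)$ (sitting in the $U = I$ summand of the Hochster decomposition), and $\{j_1,\ldots,j_r\} \subseteq [m] \smallsetminus I$. Since such elements span the quasi-linear strand as a $k$-vector space by \cref{rem_Hoch_strand}, the theorem will follow by $k$-linearity once each is shown to be a Hurewicz image.

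Next I would realise each $\alpha_I$ as a Hurewicz image via an embedded sphere in $\mathcal{Z}_K$. Because $I$ is a minimal non-face, the full subcomplex $K_I$ equals $\partial\Delta^{|I|-1}$, so $\mathcal{Z}_{K_I} \cong S^{2|I|-1}$. The subcomplex inclusion $K_I \hookrightarrow K$ induces an embedding $\varphi_I \colon S^{2|I|-1} \hookrightarrow \mathcal{Z}_K$, and by naturality of the isomorphisms in \cref{prop_homology_Z_K}, the Hurewicz image of $\varphi_I$ corresponds to the fundamental class of $K_I$ in the $U = I$ summand, i.e.\ $h(\varphi_I) = \pm \alpha_I$. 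Finally, \cref{thm_Hurewicz_equiv} says that the Hurewicz image is stable under each $\lambda_j$: if $\beta = h(f)$ then $\lambda_j \beta$ is the Hurewicz image of a lift of the Whitehead product $[\mu_j, \omega \circ f]$. Iterating this $r$ times shows that each $\lambda_{j_1} \cdots \lambda_{j_r} \alpha_I$ is itself a Hurewicz image, completing the argument.

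The only step requiring genuine verification is the identification $h(\varphi_I) = \pm \alpha_I$; this is essentially a chain-level bookkeeping exercise, tracing the fundamental class of $\mathcal{Z}_{K_I} = S^{2|I|-1}$ through the cellular model $R_*(K_I)$ and the Hochster isomorphism of \cref{prop_homology_Z_K}. Explicitly, one checks that under the right-hand isomorphism of \cref{prop_homology_Z_K}, the top cell $v_I \in R_*(K_I)$ maps to the fundamental class of $\partial\Delta^{|I|-1}$, which becomes $\alpha_I$ after transport into $H_*(\mathcal{Z}_K;k)$. This is the main (and in fact only) obstacle in the proof; once it is in hand, everything else is a direct combination of \cref{rem_Hoch_strand} and \cref{thm_Hurewicz_equiv}.
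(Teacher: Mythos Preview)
Your approach is essentially identical to the paper's: realise the degree-one generators of the quasi-linear strand as Hurewicz images of the natural maps $\omega_I\colon S^{2|I|-1}=\mathcal{Z}_{K_I}\to\mathcal{Z}_K$ (your $\varphi_I$), then iterate \cref{thm_Hurewicz_equiv} to sweep out the whole strand. One small correction to your final paragraph: $v_I$ is \emph{not} an element of $R_*(K_I)$, since $I\notin K_I=\partial\Delta^{|I|-1}$; the fundamental class of $\mathcal{Z}_{K_I}=S^{2|I|-1}$ is instead represented by the cycle $\sum_{i\in I}\pm v_{I\smallsetminus i}\lambda_i$ (exactly the $r=0$ case of \cref{rem_Hoch_strand}), which is what the paper writes down.
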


\begin{proof}
The cohomological degree one part $H_{-1,*}(\mathcal{Z}_K;k)\cong\Ext_S^1(k[K],k)$ is contained in the image of the Hurewicz map since for each minimal non-face $I\notin K$, the corresponding homology class (represented by the cycle $\sum_{i\in I}v_{I\smallsetminus i}\lambda_i$ in the co-Koszul complex $R_*(K)$) is the Hurewicz image of the element $\omega_I\in\pi_{2|I|-1}(\mathcal{Z}_K)$ given by the natural map $\omega_I\colon S^{2|I|-1}=\mathcal{Z}_{K_I} \to \mathcal{Z}_K$ induced by the inclusion $K_I\hookrightarrow K$. By \cref{thm_Hurewicz_equiv}, the Hurewicz image is closed under the sweep action and therefore spans the entire quasi-linear strand, that is, the image of $\Lambda\otimes H_{-1,*}(\mathcal{Z}_K;k)\to H_*(\mathcal{Z}_K;k)$.
\end{proof}

Let $\operatorname{sk}_nX$ denote the $n$-skeleton of a CW-complex $X$. The next corollary shows that the Green--Lazarsfeld index of a Stanley--Reisner ideal, $\operatorname{index}(I_K)$, gives a lower bound on the \emph{partial formality} of $\mathcal{Z}_K$ in the sense of~\cite{Ma}. 

\begin{corollary} \label{cor_Ndr}
If $I_K$ satisfies condition $N_{d,r}$ over $\mathbb{F}_p$ for all primes $p$ (respectively, for some $p$), then $\operatorname{sk}_{2d+r}\mathcal{Z}_K$ has the homotopy type (respectively, $p$-local homotopy type) of a wedge of spheres. In particular, $\mathcal{Z}_K$ is $(2d+r-1)$-formal.
\end{corollary}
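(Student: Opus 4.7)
The plan is to combine \cref{thm_Hurewicz_strand} with a cellular count to produce a wedge of spheres mapping to $\operatorname{sk}_{2d+r}\mathcal{Z}_K$ that is a mod-$p$ homology isomorphism, then invoke Whitehead's theorem.

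First I would translate the hypothesis topologically. Condition $N_{d,r}$ on $I_K$ is equivalent to $\beta_{p,j}(k[K])=0$ unless $j=p+d-1$, for $1\leqslant p\leqslant r$ (where $k=\mathbb{F}_p$). Any nonlinear class in $\Tor^S_p(k[K],k)_j$ therefore forces $p\geqslant r+1$ and $j\geqslant p+d$, and by the Hochster decomposition of \cref{prop_homology_Z_K} it contributes to $H_n(\mathcal{Z}_K;k)$ in degree $n=2j-p\geqslant p+2d\geqslant r+2d+1$. Hence $H_n(\mathcal{Z}_K;k)$ lies entirely in the (quasi-)linear strand for every $n\leqslant 2d+r$, and \cref{thm_Hurewicz_strand} realises each such class as the Hurewicz image of an element of $\pi_n(\mathcal{Z}_K)$.

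Next I would show that every class in $H_n(\operatorname{sk}_{2d+r}\mathcal{Z}_K;k)$ is spherical when $n\leqslant 2d+r$. For $n<2d+r$ the skeletal inclusion is an iso on $H_n$, so this is immediate from the previous paragraph. In the top degree $n=2d+r$, the short exact sequence
\[
0\to B_{2d+r}(\mathcal{Z}_K)\to H_{2d+r}(\operatorname{sk}_{2d+r}\mathcal{Z}_K)\to H_{2d+r}(\mathcal{Z}_K)\to 0
\]
splits the task in two. Classes in the right-hand quotient are already spherical in $\mathcal{Z}_K$ by the previous paragraph, and the surjection $\pi_{2d+r}(\operatorname{sk}_{2d+r}\mathcal{Z}_K)\twoheadrightarrow \pi_{2d+r}(\mathcal{Z}_K)$ lets us choose lifts that land in the skeleton. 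Meanwhile the kernel $B_{2d+r}$ is spanned by cellular boundaries of the $(2d+r+1)$-cells $\varkappa(I,J)$ of $\mathcal{Z}_K$, each of which is literally the Hurewicz image of the corresponding attaching map $S^{2d+r}\to \operatorname{sk}_{2d+r}\mathcal{Z}_K$.

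Assembling spherical lifts of a basis of $H_*(\operatorname{sk}_{2d+r}\mathcal{Z}_K;k)$ into a single map $f\colon W\to \operatorname{sk}_{2d+r}\mathcal{Z}_K$ from a wedge of spheres then yields a mod-$p$ homology isomorphism. Both sides are simply connected (since $K$ has no ghost vertices), so the mod-$p$ Whitehead theorem upgrades $f$ to a $p$-local homotopy equivalence; the same construction with $\mathbb{Z}$-coefficients gives the integral version when $N_{d,r}$ holds over every $\mathbb{F}_p$. Partial $(2d+r-1)$-formality is then automatic, since the $(2d+r-1)$-skeleton of a wedge of spheres is itself a wedge of spheres, and wedges of spheres are formal. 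The subtlest step I anticipate is combining the lifts of $H_{2d+r}(\mathcal{Z}_K)$ with those of $B_{2d+r}$ into a single wedge map that is a mod-$p$ homology iso rather than just a surjection, and checking that no Bockstein obstruction spoils the integral version.
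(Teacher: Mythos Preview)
Your proposal is correct and the degree count $n=2j-p\geqslant p+2d\geqslant 2d+r+1$ for nonlinear classes is clean. The approach differs from the paper's in two places worth noting.

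First, you conclude that $H_{\leqslant 2d+r}(\mathcal{Z}_K;k)$ lies in the ``(quasi-)linear strand'' directly from your degree count. Strictly speaking, the degree count only shows these classes lie in the \emph{lowest linear strand} $\bigoplus_i\Ext^i_S(k[K],k)_{i+d-1}$; to apply \cref{thm_Hurewicz_strand} you need them in the \emph{quasi-linear strand}. The paper states in \cref{se_lin_background} that these coincide for equigenerated modules, so you are entitled to use it, but be aware that classes in $H_{2d+r-1}$ and $H_{2d+r}$ can come from $\Ext^{r+1}$ and $\Ext^{r+2}$, beyond the range directly controlled by $N_{d,r}$, so this identification is doing real work. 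The paper instead argues directly, using the higher operations $\delta_J$ from \cref{thm_min_res}: it shows that any $\alpha\in H^*(\mathcal{Z}_K;k)$ with $2d\leqslant *\leqslant 2d+r$ supports no $\delta_J$ with $|J|>1$, then uses minimality and exactness of the resolution to force $\iota_j(\alpha)\neq 0$ for some $j$, whence (dually) the class lies in the image of some $\lambda_j$. Your route is shorter once the identification is granted; the paper's is self-contained.

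Second, your treatment of the top degree of the skeleton---splitting $H_{2d+r}(\operatorname{sk}_{2d+r}\mathcal{Z}_K)$ into $B_{2d+r}$ (spherical via attaching maps) and the image of $H_{2d+r}(\mathcal{Z}_K)$ (spherical via lifts through $\pi_{2d+r}(\operatorname{sk}_{2d+r}\mathcal{Z}_K)\twoheadrightarrow\pi_{2d+r}(\mathcal{Z}_K)$)---is more explicit than the paper's, which passes directly from ``Hurewicz map is surjective for $\mathcal{Z}_K$ in degrees $\leqslant 2d+r$'' to the same statement for the skeleton. Your concern about getting an isomorphism rather than a surjection is not an issue: choosing one sphere per $k$-basis element of $H_*(\operatorname{sk}_{2d+r}\mathcal{Z}_K;k)$ already gives a bijection on $k$-bases. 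For the integral statement, the $p$-local splittings for all $p$ force $H_*(\operatorname{sk}_{2d+r}\mathcal{Z}_K;\mathbb{Z})$ to be torsion-free, and then the cokernel of the integral Hurewicz map vanishes mod every prime, hence vanishes; this is essentially how the paper handles it too.
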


\begin{proof}
Suppose that $I_K$ is $N_{d,r}$ over $k=\mathbb{F}_p$. Since $I_K$ is generated in degree $d$, $\mathcal{Z}_K$ is $(2d-2)$-connected with $H_{2d-1}(\mathcal{Z}_K;k)\cong\Ext_S^1(k[K],k)$. Since $I_K$ is $N_{d,r}$, it follows that 
\begin{equation} \label{eq_strand}
\Ext_S^{\leqslant r}(k[K],k)=k \oplus\bigoplus_{i=1}^r\Ext_S^i(k[K],k)_{i+d-1} \cong H_{\leqslant 2d+r-2}(\mathcal{Z}_K;k)
\end{equation}
is contained in the linear strand of $k[K]$. To see that $H_{\leqslant 2d+r}(\mathcal{Z}_K;k)$ is also contained in the linear strand, note that in the minimal free resolution $\big( S\otimes_k H^*(\mathcal{Z}_K;k),\, \delta \big)$ with $\delta=\sum_{J\subseteq[m]}v_J\otimes\delta_J$ (see \cref{thm_min_res}), each operation~$\delta_J$ with $|J|>1$ has topological degree $|\delta_J|\leqslant -3$, defining maps $\Tor^S_i(k[K],k)_j \to \Tor^S_{i-1}(k[K],k)_{j-|J|}$. So if some $\alpha \in H^*(\mathcal{Z}_K;k)$ of degree $2d\leqslant *\leqslant 2d+r$ supports such an operation, then $\delta_J(\alpha)\in H^{\leqslant 2d+r-3}(\mathcal{Z}_K;k)$ defines an element of $\Tor^S_i(k[K],k)_{i+d-1}$ for some $i\leqslant r-1$ by~\eqref{eq_strand}, implying that $\alpha$ defines a nonzero element of $\Tor^S_{i+1}(k[K],k)_{>i+d}$ lying outside the (homological) linear strand, contradicting the $N_{d,r}$ condition. Therefore each $\delta_J$ with $|J|>1$ vanishes on $H^*(\mathcal{Z}_K;k)$ in degrees $2d\leqslant *\leqslant 2d+r$. As $1\otimes\alpha \in S\otimes_k H^*(\mathcal{Z}_K;k)$ is not in the image of $\delta$ by minimality ($\operatorname{im}\delta\subset \mathfrak{m}\otimes_k H^*(\mathcal{Z}_K;k)$), exactness of the resolution implies $\delta_{\{j\}}(\alpha)=\iota_j(\alpha) \neq 0$ for some $j\in[m]$. Dualising, every element of $H_*(\mathcal{Z}_K;k)$, $2d\leqslant *\leqslant 2d+r$, is in the span of the images of the $\lambda_j$ maps and therefore part of the linear strand.

\cref{thm_Hurewicz_strand} now implies that the mod $p$ reduction of the Hurewicz map is surjective for $\operatorname{sk}_{2d+r}\mathcal{Z}_K$. Since $\mathcal{Z}_K$ is simply connected, Whitehead's theorem implies that $\operatorname{sk}_{2d+r}\mathcal{Z}_K$ is $p$-locally homotopy equivalent to a wedge of spheres. Similarly, if $I_K$ is $N_{d,r}$ over $\mathbb{F}_p$ for all $p$, then $\operatorname{sk}_{2d+r}\mathcal{Z}_K$ has the homotopy type a wedge of spheres after localisation at any prime. The universal coefficient theorem therefore gives isomorphisms
$H_*(\operatorname{sk}_{2d+r}\mathcal{Z}_K;\mathbb{Z}) \otimes \mathbb{F}_p \to H_*(\operatorname{sk}_{2d+r}\mathcal{Z}_K;\mathbb{F}_p)$ for every prime~$p$, and it follows that the Hurewicz map $h\colon \pi_*(\operatorname{sk}_{2d+r}\mathcal{Z}_K) \to H_*(\operatorname{sk}_{2d+r}\mathcal{Z}_K;\mathbb{Z})$ is surjective since its mod~$p$ reduction is for all primes~$p$.
\end{proof}

In particular, $\mathcal{Z}_K$ is homotopy equivalent to a wedge of spheres (and hence formal) when $I_K$ has a linear resolution, or equivalently $k[K^\vee]$ is Cohen--Macaulay, over all fields $k$. Generalising this, the next two corollaries recover results of Beben--Grbi\'c~\cite[Proposition~3.18]{BG} and Iriye--Kishimoto~\cite[Corollary~1.5]{IK19} concerning HMF complexes and dual sequentially Cohen--Macaulay complexes, respectively.\footnote{We point out here that Iriye and Kishimoto's hypothesis in~\cite[Corollary~1.5]{IK19} that $K$ is dual sequentially Cohen--Macaulay over $\mathbb{Z}$ is equivalent to the hypothesis in \cref{cor_compwise} above that $I_K$ is componentwise linear over $\mathbb{F}_p$ for all primes $p$ (equivalently, over all fields $k$). This can be seen using Reisner's criterion~\cite{Re} in combination with Herzog and Hibi's theorem~\cite[Theorem 2.1]{HH} on the duality between sequential Cohen--Macaulayness and componentwise linearity.}

\begin{corollary} \label{cor_HMF}
If $K$ is an HMF complex over $\mathbb{Z}$ (resp. $\mathbb{F}_p$), then $\mathcal{Z}_K$ has the homotopy type (resp. $p$-local homotopy type) of a wedge of spheres.
\end{corollary}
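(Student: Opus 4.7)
The plan is to reduce the corollary to the two main ingredients already at hand, namely \cref{prop_HMF} and \cref{thm_Hurewicz_strand}, together with a standard mod-$p$ to integral argument modelled on the end of the proof of \cref{cor_Ndr}.

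First, I would invoke \cref{prop_HMF}: the assumption that $K$ is an HMF complex over $k$ (where $k=\mathbb{F}_p$ or, for the integral statement, $k=\mathbb{F}_p$ for every prime $p$) is equivalent to $I_K$ being a quasi-Koszul $S$-module. By \cref{def_strand} and the remark after it, this means that the quasi-linear strand of $k[K]$ equals all of $\Ext_S(k[K],k)$ in positive Ext-degrees. Via the isomorphism $H_*(\mathcal{Z}_K;k)\cong \Ext_S(k[K],k)$ of \cref{prop_homology_Z_K}, this identifies the quasi-linear strand with all of the positive-degree reduced homology of $\mathcal{Z}_K$ with $k$-coefficients.

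Next I would apply \cref{thm_Hurewicz_strand}: the quasi-linear strand lies in the $k$-linear span of the Hurewicz image, so combined with the previous step we conclude that the mod-$p$ Hurewicz map $h\colon\pi_*(\mathcal{Z}_K)\to H_*(\mathcal{Z}_K;\mathbb{F}_p)$ is surjective. Since $K$ has no ghost vertices, $\mathcal{Z}_K$ is simply connected, so one may choose a map $W\to\mathcal{Z}_K$ from a wedge of spheres $W$ representing a basis of homology classes hitting all of $H_*(\mathcal{Z}_K;\mathbb{F}_p)$; this is a mod-$p$ homology isomorphism between simply connected CW complexes and hence a $p$-local homotopy equivalence by Whitehead's theorem. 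This settles the $\mathbb{F}_p$ case.

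For the integral statement, with the hypothesis that $K$ is HMF over $\mathbb{F}_p$ for every prime $p$, the preceding paragraph yields a surjection on mod-$p$ homology for every $p$. Exactly as in the closing paragraph of the proof of \cref{cor_Ndr}, the universal coefficient theorem then gives isomorphisms $H_*(\mathcal{Z}_K;\mathbb{Z})\otimes\mathbb{F}_p\to H_*(\mathcal{Z}_K;\mathbb{F}_p)$ for every prime $p$, from which surjectivity of the integral Hurewicz map follows; another application of Whitehead's theorem to a map $W\to\mathcal{Z}_K$ from a wedge of spheres realising an integral basis then upgrades this to an integral homotopy equivalence. There is no real obstacle here: all the genuine content is packaged into \cref{prop_HMF} and \cref{thm_Hurewicz_strand}, and the only thing to be careful about is the standard passage from mod-$p$ to integral statements, which is exactly the argument already used in \cref{cor_Ndr}.
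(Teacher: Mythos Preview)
Your argument for the $\mathbb{F}_p$ case is correct and matches the paper's two-line proof exactly: invoke \cref{prop_HMF} to translate the HMF condition into quasi-Koszulity, apply \cref{thm_Hurewicz_strand} to get surjectivity of the mod~$p$ Hurewicz map, and finish with Whitehead's theorem.

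For the integral case you diverge from the paper, and in doing so introduce a gap. You reinterpret the hypothesis ``HMF over $\mathbb{Z}$'' as ``HMF over $\mathbb{F}_p$ for every prime $p$'' and then run the mod-$p$ argument for all primes. But this reinterpretation is not justified: HMF over $k$ is a condition on $\widetilde{H}_*(K_U;k)$, and the universal coefficient theorem does \emph{not} make the implication automatic---the images of missing-face classes in $\widetilde{H}_*(K_U;\mathbb{F}_p)$ always lie in the subgroup $\widetilde{H}_*(K_U;\mathbb{Z})\otimes\mathbb{F}_p$ (since the $K_I$ have torsion-free homology), so if some $\widetilde{H}_*(K_U;\mathbb{Z})$ had $p$-torsion the $\mathrm{Tor}$ contribution to $\widetilde{H}_{*+1}(K_U;\mathbb{F}_p)$ would not be hit. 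Establishing that no such torsion occurs is essentially equivalent to what you are trying to prove. The paper avoids this entirely by applying \cref{prop_HMF} and \cref{thm_Hurewicz_strand} directly with $k=\mathbb{Z}$ to obtain surjectivity of the \emph{integral} Hurewicz map, and then concluding; the elaborate passage through all primes borrowed from \cref{cor_Ndr} is unnecessary here.
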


\begin{proof}
By \cref{prop_HMF} and \cref{thm_Hurewicz_strand}, the Hurewicz map (or its mod $p$ reduction) is surjective in all degrees. Since $\mathcal{Z}_K$ is simply connected, the result follows.
\end{proof}

\begin{corollary} \label{cor_compwise}
If $I_K$ is componentwise linear over $\mathbb{F}_p$ for all primes $p$ (resp. for some $p$), then $\mathcal{Z}_K$ has the homotopy type (resp. $p$-local homotopy type) of a wedge of spheres.
\end{corollary}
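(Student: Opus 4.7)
The plan is to reduce this corollary to the quasi-Koszul case already handled by \cref{thm_Hurewicz_strand} (and essentially carried out in \cref{cor_HMF}), going through the chain of implications: componentwise linear $\Rightarrow$ Koszul $\Rightarrow$ quasi-Koszul.

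First, suppose $I_K$ is componentwise linear over $k=\mathbb{F}_p$. Since the polynomial ring $S$ is a Koszul algebra, R\"omer's characterisation (\cref{thm_complin_iff_Kos}) implies that $I_K$ is a Koszul $S$-module over~$k$. Next I would apply \cref{lem_Kos_qKos} to conclude that $I_K$ is a quasi-Koszul $S$-module over~$k$. By definition of quasi-Koszul, the quasi-linear strand of $k[K]$ coincides with the entire module $\Ext_S(k[K],k)$, which under the isomorphism of \cref{prop_homology_Z_K} is the whole of $\widetilde{H}_*(\mathcal{Z}_K;k)$.

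Now \cref{thm_Hurewicz_strand} tells us that the quasi-linear strand lies in the $k$-linear span of the Hurewicz image, so the mod~$p$ reduction of the Hurewicz map $\pi_*(\mathcal{Z}_K)\to H_*(\mathcal{Z}_K;\mathbb{F}_p)$ is surjective in all positive degrees. Since $\mathcal{Z}_K$ is simply connected, a standard application of Whitehead's theorem (choosing maps $S^{n_\alpha}\to \mathcal{Z}_K$ realising a basis of $H_*(\mathcal{Z}_K;\mathbb{F}_p)$ and assembling them into a map from a wedge of spheres) shows that $\mathcal{Z}_K$ is $p$-locally homotopy equivalent to a wedge of spheres.

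For the integral statement, if $I_K$ is componentwise linear over $\mathbb{F}_p$ for every prime~$p$, then the previous paragraph gives a $p$-local wedge-of-spheres equivalence for each~$p$. As in the proof of \cref{cor_Ndr}, the universal coefficient theorem then yields isomorphisms $H_*(\mathcal{Z}_K;\mathbb{Z})\otimes \mathbb{F}_p \xrightarrow{\cong} H_*(\mathcal{Z}_K;\mathbb{F}_p)$ for all primes~$p$ (in particular, the integral homology is torsion-free and finitely generated in each degree), and a diagram chase shows that the integral Hurewicz map $h\colon \pi_*(\mathcal{Z}_K)\to H_*(\mathcal{Z}_K;\mathbb{Z})$ is surjective since its mod~$p$ reduction is for every~$p$. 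A second application of Whitehead's theorem then produces an integral homotopy equivalence from a wedge of spheres to $\mathcal{Z}_K$. I do not expect any serious obstacle here: the whole argument is a bookkeeping exercise combining R\"omer's theorem, \cref{lem_Kos_qKos}, and \cref{thm_Hurewicz_strand}, with the slight subtlety being the passage from $p$-local to integral information, which is standard given finite generation and torsion-freeness of the homology.
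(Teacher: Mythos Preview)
Your proposal is correct and follows essentially the same route as the paper: componentwise linear $\Rightarrow$ Koszul (\cref{thm_complin_iff_Kos}) $\Rightarrow$ quasi-Koszul (\cref{lem_Kos_qKos}) $\Rightarrow$ Hurewicz surjective (\cref{thm_Hurewicz_strand}) $\Rightarrow$ wedge of spheres. The only difference is cosmetic: the paper packages the last two steps by citing \cref{cor_HMF} (via the equivalence quasi-Koszul $\Leftrightarrow$ HMF from \cref{prop_HMF}), whereas you unpack that corollary directly and spell out the passage from $p$-local to integral information using the universal-coefficient argument of \cref{cor_Ndr}.
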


\begin{proof}
This follows from \cref{cor_HMF}. To see that $K$ is HMF when $I_K$ is componentwise linear, recall that the former condition holds precisely when $I_K$ is a quasi-Koszul module (\cref{prop_HMF}), while the latter holds precisely when $I_K$ is a Koszul module (\cref{thm_complin_iff_Kos}), and Koszul modules $I_K$ are quasi-Koszul by \cref{lem_Kos_qKos}.
\end{proof}

Recall that $k[K]$ is \emph{Golod} if all products and (higher) Massey products are trivial in $\Tor^S_+(k[K],k)$. \cref{cor_HMF} implies that the Stanley--Reisner rings of HMF complexes are Golod. In particular, we recover the following result of Herzog, Reiner and Welker~\cite{HRW}.

\begin{corollary}
If $I_K$ is componentwise linear over $k$, then $k[K]$ is a Golod ring. \qed
\end{corollary}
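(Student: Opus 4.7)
The plan is to simply combine the preceding \cref{cor_compwise} with the standard multiplicative identification $H^*(\mathcal{Z}_K;k)\cong \Tor^S(k[K],k)$, reducing the Golod condition on $k[K]$ to a vanishing statement about products and Massey products in the cohomology of a wedge of spheres.

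First I would dispose of a small issue about the field: the Hochster formula computes the graded Betti numbers of $I_K$ over $k$ from the dimensions of the reduced simplicial homology groups $\widetilde{H}_*(K_U;k)$, so componentwise linearity of $I_K$ over $k$ depends only on $\operatorname{char}(k)$ and in particular holds over the prime subfield. Setting $p=\operatorname{char}(k)$, \cref{cor_compwise} (together with its obvious rational analogue when $p=0$, which follows by exactly the chain \cref{prop_HMF}, \cref{thm_complin_iff_Kos}, \cref{lem_Kos_qKos}, \cref{cor_HMF}) then yields that $\mathcal{Z}_K$ is $p$-locally (or rationally) homotopy equivalent to a wedge of spheres.

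Next I would observe that a wedge of spheres $W=\bigvee_\alpha S^{n_\alpha}$ has vanishing cup products and vanishing Massey products in $H^*(W;k)$. Cup products vanish for the trivial reason that the product of two positive-degree classes supported on distinct or equal wedge summands factors through $H^*(S^a\wedge S^b;k)$, which does not embed nontrivially in the cohomology of the wedge. Massey products vanish because $W$ is $k$-formal: its cohomology is a free graded-commutative $k$-algebra modulo all positive-degree products, and the zero differential on this algebra is a formal model in which every defined Massey product is trivial for degree and freeness reasons.

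Finally I would invoke the compatibility of the isomorphism $H^*(\mathcal{Z}_K;k)\cong \Tor^S(k[K],k)$ with products and Massey products, which is a standard consequence of modelling both sides by the Koszul complex $k[K]\otimes_S\mathbb{K}$ (compare the remark following the co-Koszul complex in \cref{sec_MACs}, and see \cite{BP}). Transporting the vanishing of all products and Massey products across this $A_\infty$-isomorphism gives precisely Golodness of $k[K]$. The one step that requires care is this last transport of higher Massey products: one must check that the dg-model used for $\mathcal{Z}_K$ and the Koszul-complex model for $\Tor^S(k[K],k)$ are quasi-isomorphic as dg-algebras, not merely as chain complexes; this is well known but deserves a direct citation to the moment-angle literature.
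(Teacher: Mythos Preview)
Your proposal is correct and follows essentially the same route as the paper: the paper marks this corollary with a bare \qed, having just noted in the sentence before it that \cref{cor_HMF} implies the Stanley--Reisner rings of HMF complexes are Golod, and the chain ``componentwise linear $\Rightarrow$ Koszul module $\Rightarrow$ quasi-Koszul $\Rightarrow$ HMF $\Rightarrow$ $\mathcal{Z}_K$ is ($p$-locally) a wedge of spheres $\Rightarrow$ all products and Massey products vanish in $\Tor^S_+(k[K],k)$'' is exactly what you spell out. Your extra care about the prime subfield and the dg-algebra compatibility of the isomorphism $H^*(\mathcal{Z}_K;k)\cong\Tor^S(k[K],k)$ makes explicit what the paper leaves implicit.
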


\subsection*{The flag case}

The relationship between Golodness and the condition that $\mathcal{Z}_K$ is homotopy equivalent to a wedge of spheres is especially tight when $K$ is a flag complex. The authors of~\cite{GPTW} showed that, among flag complexes, each condition holds
precisely when the $1$-skeleton $K^1$ of $K$ is a chordal graph. In this section we refine this result using a combinatorial characterisation of the $N_{2,r}$ condition due to Eisenbud, Green, Hulek and Popescu~\cite{EGHP}.

Recall that a simplicial complex $K$ is \emph{flag} if each of its minimal non-faces consists of exactly two vertices, or equivalently, the ideal $I_K$ is generated in degree two. A flag complex $K$ is therefore the clique complex of its underlying graph $K^1$. Recall also that a graph is called \emph{chordal} if it contains no chordless cycles (i.e., no cycle of length at least four with no chord).

If $K$ is a flag complex with chordal $1$-skeleton, then $\mathcal{Z}_K$ can be seen to be homotopy equivalent to a wedge of spheres by combining \cref{thm_Hurewicz_strand} with Fr\"oberg's classification of quadratic Stanley--Reisner ideals with linear resolutions: \cite[Theorem~1]{Fro} states that when $K$ is flag, $I_K$ has a linear resolution if and only if $K^1$ is chordal. More generally,~\cite[Theorem~2.1]{EGHP} classifies Stanley--Reisner ideals satisfying condition $N_{2,r}$ by bounding the length of chordless cycles in $K^1$. Combining \cref{thm_Hurewicz_strand} with this refinement of Fr\"oberg's theorem leads to the following.

\begin{theorem} \label{thm_flag}
Let $K$ be a flag complex and let $r>1$. Then the following are equivalent:
\begin{enumerate}[label={\normalfont(\alph*)}]
\item $\operatorname{sk}_{r+4}\mathcal{Z}_K$ is homotopy equivalent to a wedge of spheres;
\item the graph $K^1$ contains no chordless cycles of length $\leqslant r+2$.
\end{enumerate}
\end{theorem}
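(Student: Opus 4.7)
The plan is to combine the Eisenbud--Green--Hulek--Popescu generalisation of Fr\"oberg's theorem \cite{EGHP} with \cref{cor_Ndr} for one direction, and to deduce the other by a cup product obstruction. By \cite[Theorem~2.1]{EGHP}, for a flag complex $K$ the ideal $I_K$ satisfies $N_{2,r}$ if and only if $K^1$ contains no chordless cycles of length at most $r+2$; thus (b) is precisely equivalent to $I_K$ satisfying $N_{2,r}$ over any field. Direction (b)$\Rightarrow$(a) then follows immediately by applying \cref{cor_Ndr} with $d=2$.

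For (a)$\Rightarrow$(b), I would argue the contrapositive. Suppose $K^1$ contains a chordless cycle $C$ of length $\ell$ with $4\leq \ell\leq r+2$. Since $K$ is flag and $C$ is chordless, the full subcomplex $K_C$ contains no triangles and so agrees with the cycle graph $C_\ell$. As $C_\ell$ is a Gorenstein$^*$ complex, the moment-angle space $\mathcal{Z}_{K_C}=\mathcal{Z}_{C_\ell}$ is a closed, simply connected manifold of dimension $\ell+2$. Because $K_C$ is a full subcomplex of $K$, projection onto the coordinates indexed by $C$ defines a retraction $\mathcal{Z}_K\to \mathcal{Z}_{K_C}$, so the induced restriction $H^*(\mathcal{Z}_{C_\ell};k)\hookrightarrow H^*(\mathcal{Z}_K;k)$ is an injective ring homomorphism. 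By Poincar\'e duality on $\mathcal{Z}_{C_\ell}$, the degree-$3$ class $\omega_I$ corresponding to any non-edge $I$ of $C_\ell$ admits a Poincar\'e dual $\widehat{\omega}_I\in H^{\ell-1}(\mathcal{Z}_{C_\ell};k)$ with $\omega_I \smile \widehat{\omega}_I\neq 0$ in $H^{\ell+2}(\mathcal{Z}_{C_\ell};k)$. Pulling back produces a non-trivial positive-degree cup product $\alpha\smile\beta\neq 0$ in $H^{\ell+2}(\mathcal{Z}_K;k)$. Since $\ell+2\leq r+4$, the inclusion $\operatorname{sk}_{r+4}\mathcal{Z}_K\hookrightarrow \mathcal{Z}_K$ is injective on cohomology in this degree and respects cup products, so this non-trivial product survives in $H^*(\operatorname{sk}_{r+4}\mathcal{Z}_K;k)$---an impossibility for a wedge of spheres. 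This contradicts (a).

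The main technical obstacle is verifying the non-trivial cup product in $H^*(\mathcal{Z}_{C_\ell};k)$ with the indicated degrees for all $\ell\geq 4$. For $\ell=4$ this is immediate from the classical identification $\mathcal{Z}_{C_4}\simeq S^3\times S^3$. For $\ell\geq 5$ I would invoke Poincar\'e duality on the closed manifold $\mathcal{Z}_{C_\ell}$ (using that $H^3(\mathcal{Z}_{C_\ell};k)$ contains the non-edge classes, which by duality must pair non-trivially with classes in $H^{\ell-1}$), or alternatively carry out a direct Hochster computation: writing $U_2=C\smallsetminus I$, the cup product is the composite of the join isomorphism $\widetilde{H}^0(K_I;k)\otimes \widetilde{H}^0(K_{U_2};k)\cong \widetilde{H}^1(K_I\ast K_{U_2};k)$ followed by restriction to $\widetilde{H}^1(K_C;k)$, and this restriction can be shown to be an isomorphism because the inclusion $K_C\hookrightarrow K_I\ast K_{U_2}$ of subcomplexes each homotopy equivalent to $S^1$ induces an isomorphism on $H^1$.
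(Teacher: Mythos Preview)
Your proof is correct and follows essentially the same approach as the paper: both directions use \cite[Theorem~2.1]{EGHP} together with \cref{cor_Ndr} for (b)$\Rightarrow$(a), and the retract $\mathcal{Z}_{C_\ell}\hookrightarrow\mathcal{Z}_K$ with its nontrivial cup product for the converse. The only difference is cosmetic: the paper observes directly that since $\dim\mathcal{Z}_{K_I}=\ell+2\leqslant r+4$, the cellular inclusion $\mathcal{Z}_{K_I}\hookrightarrow\mathcal{Z}_K$ already factors through $\operatorname{sk}_{r+4}\mathcal{Z}_K$, making $\mathcal{Z}_{K_I}$ a retract of the skeleton itself, whereas you pull the cup product back to $H^*(\mathcal{Z}_K)$ via the retraction and then restrict to the skeleton using injectivity of $H^{\leqslant r+4}(\mathcal{Z}_K)\to H^{\leqslant r+4}(\operatorname{sk}_{r+4}\mathcal{Z}_K)$. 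Your last paragraph is more detail than needed---Poincar\'e duality on the closed manifold $\mathcal{Z}_{C_\ell}$ immediately gives the nontrivial product---but it is not wrong.
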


\begin{proof}
If $K^1$ contains no chordless cycles of length $\leqslant r+2$, then $I_K$ satisfies condition $N_{2,r}$ over any field by~\cite[Theorem~2.1]{EGHP}. Therefore $\operatorname{sk}_{r+4}\mathcal{Z}_K$ has the homotopy type of a wedge of spheres by \cref{cor_Ndr}.

Conversely, suppose that the graph $K^1$ contains a chordless cycle of length $4\leqslant\ell\leqslant r+2$, and let $I\subseteq[m]$ be the set of vertices of this cycle. Then the full subcomplex $K_I$ is the boundary of an $\ell$-gon, and $\mathcal{Z}_{K_I}$ is a retract of $\mathcal{Z}_K$ (since the projection $\prod_{i\in[m]}D^2 \to \prod_{i\in I}D^2$ restricts to a left inverse of the inclusion $\mathcal{Z}_{K_I} \hookrightarrow \mathcal{Z}_K$; cf.~\cite[Lemma~2.2.3]{DS}). Since $\mathcal{Z}_{K_I}$ is $(\ell+2)$-dimensional with a non-trivial cup product in cohomology, $\mathcal{Z}_{K_I} \hookrightarrow \mathcal{Z}_K$ factors through $\operatorname{sk}_{r+4}\mathcal{Z}_K$, implying that the latter does not have the homotopy type of a wedge of spheres.
\end{proof}

\begin{remark}\ 
\begin{enumerate}[label={\normalfont(\arabic*)}]
    \item The $r=\infty$ (or $r\geqslant m-2$) case of \cref{thm_flag} recovers the classification of flag complexes $K$ for which $\mathcal{Z}_K$ is homotopy equivalent to a wedge of spheres in~\cite[Theorem~4.6]{GPTW}.
    \item At each stage of the skeletal filtration of a moment-angle complex
    \[
    \mathrm{pt}=\operatorname{sk}_0\mathcal{Z}_K \subseteq\cdots\cdots\subseteq \operatorname{sk}_n\mathcal{Z}_K \subseteq \operatorname{sk}_{n+1}\mathcal{Z}_K \subseteq\cdots\cdots\subseteq \mathcal{Z}_K,
    \]
    the $(n+1)$-skeleton is the homotopy cofibre of some attaching map $f_n\colon (S^n)^{\vee t_n} \to \operatorname{sk}_n\mathcal{Z}_K$, where $(S^n)^{\vee t_n}=\bigvee_{i=1}^{t_n} S^n$ is a finite wedge of $n$-spheres. 
    If $K$ is flag, then it follows from the proof of \cref{thm_flag} that the first attaching map $f_n$ which is not null-homotopic occurs precisely for $n=\operatorname{index}(I_K)+4$, where $\operatorname{index}(I_K)$ is the Green--Lazarsfeld index of $I_K$. For general $K$, this homotopy invariant of $\mathcal{Z}_K$ is not determined by $\operatorname{index}(I_K)$ alone. 
\end{enumerate}    
\end{remark}

\subsection*{Commutators and the homology suspension}
 
The Pontryagin ring of a Davis--Januszkiewicz space can be identified with the Yoneda algebra of the corresponding Stanley--Reisner ring via an isomorphism of graded algebras $H_*(\Omega DJ_K;k)\cong \Ext_{k[K]}(k,k)$ by~\cite[Proposition~8.4.10]{BP}.
Because the homotopy fibration
\[ \mathcal{Z}_K \stackrel{\omega}{\longrightarrow} DJ_K \longrightarrow BT^m \]
admits a section after looping, the Pontryagin rings of the base, total space and fibre fit in an extension of Hopf algebras
\begin{equation} \label{eq_Hopf}
k \longrightarrow H_*(\Omega\mathcal{Z}_K;k) \longrightarrow H_*(\Omega DJ_K;k) \longrightarrow \Lambda(u_1,\ldots,u_m) \longrightarrow k.
\end{equation}
The Pontryagin ring of the moment-angle complex can therefore be viewed as a subalgebra 
\[
H_*(\Omega\mathcal{Z}_K;k)\subset H_*(\Omega DJ_K;k)\cong\Ext_{k[K]}(k,k).
\]
For each $i\in[m]$, let $\hat{\mu}_i \in \pi_1(\Omega DJ_K)$ be the adjoint of the canonical generator $\mu_i\in\pi_2(DJ_K)$ with Hurewicz image \[ u_i=h(\hat{\mu}_i) \in H_1(\Omega DJ_K;k) \cong \Ext^1_{k[K]}(k,k).\] 

Denote the homology suspension by $\sigma\colon H_{n-1}(\Omega X) \to H_n(X)$. Recall that $\sigma$ is induced by the evaluation map $\mathrm{ev}\colon \Sigma\Omega X \to X$, where $H_n(\Sigma\Omega X)\cong H_{n-1}(\Omega X)$. The image of $\sigma$ is contained in the submodule of primitives in $H_*(X)$ and contains the image of the Hurewicz map. To see the latter, note that any map $f\colon S^n \to X$ (with adjoint $\hat{f}\colon S^{n-1}\to \Omega X$) factors as a composite
\begin{equation} \label{eq_susp_hom}
\begin{tikzcd}
S^n \ar[r, "\Sigma\hat{f}"] \ar[rd,"f"'] & \Sigma\Omega X \ar[d,"\mathrm{ev}"] \\
 & X
\end{tikzcd}
\end{equation}
since $\mathrm{ev}$ is the counit of the adjunction $\Sigma \dashv \Omega$. So the Hurewicz image of any $f\in\pi_n(X)$ is contained in the image of the induced map $\mathrm{ev}_*=\sigma$, that is, $\sigma(h(\hat{f}))=h(f)$. 

The next corollary of \cref{thm_Hurewicz_equiv} identifies the sweep outs of any spherical homology class $h(f)\in H_n(\mathcal{Z}_K)$ with the images of certain commutators under the homology suspension map $\sigma\colon H_{*-1}(\Omega\mathcal{Z}_K) \to H_*(\mathcal{Z}_K)$.

\begin{corollary}\label{cor_equiv}
Let $f\in\pi_n(\mathcal{Z}_K)$. Then
\[ \sigma\big( [u_i,h(\hat{f})] \big) = \lambda_ih(f) \in H_{n+1}(\mathcal{Z}_K) \]
for all $i\in [m]$, where $[-,-]$ is the commutator in the Pontryagin ring $H_*(\Omega DJ_K)$.
\end{corollary}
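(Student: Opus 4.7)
The plan is to realise $\sigma([u_i,h(\hat{f})])$ as the Hurewicz image of a Whitehead product and then invoke \cref{thm_Hurewicz_equiv}. This reduces to two classical facts: Samelson's theorem that $h(\langle\hat\alpha,\hat\beta\rangle)=[h(\hat\alpha),h(\hat\beta)]$ in the Pontryagin ring of $\Omega X$, and the compatibility of the Whitehead and Samelson products under adjunction, which says that the adjoint of $\langle\hat\alpha,\hat\beta\rangle\colon S^{p+q-2}\to\Omega X$ is, up to sign, the Whitehead product $[\alpha,\beta]\colon S^{p+q-1}\to X$. Combining these two facts with the factorisation $[\alpha,\beta]=\mathrm{ev}\circ\Sigma\langle\hat\alpha,\hat\beta\rangle$ of \eqref{eq_susp_hom} yields the key identity
\[
\sigma\big([h(\hat\alpha),h(\hat\beta)]\big) \;=\; \sigma\big(h(\langle\hat\alpha,\hat\beta\rangle)\big) \;=\; h([\alpha,\beta]).
\]

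Applying this with $\alpha=\mu_i$ and $\beta=\omega\circ f$, and using that $h(\Omega\omega\circ\hat f)=h(\hat f)$ after the Hopf algebra inclusion from \eqref{eq_Hopf}, we obtain
\[
\sigma\big([u_i,h(\hat f)]\big) \;=\; h\big([\mu_i,\omega\circ f]\big) \;\in\; H_{n+1}(DJ_K).
\]
To promote this to an equality in $H_{n+1}(\mathcal{Z}_K)$, I would take the adjoint $g\colon S^n\to \Omega\mathcal{Z}_K$ of the unique lift $[\overline{\mu_i,\omega\circ f}]\in\pi_{n+1}(\mathcal{Z}_K)$ constructed in \cref{thm_Hurewicz_equiv}. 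On the one hand, $\sigma(h(g))=h([\overline{\mu_i,\omega\circ f}])=\lambda_i h(f)$ by \eqref{eq_susp_hom} together with \cref{thm_Hurewicz_equiv}. On the other hand, $\Omega\omega\circ g$ is the adjoint of $[\mu_i,\omega\circ f]$, hence (up to sign) the Samelson product $\langle\hat\mu_i,\Omega\omega\circ\hat f\rangle$, whose Hurewicz image is $[u_i,h(\hat f)]$. Since $(\Omega\omega)_*$ is injective on $H_*(\Omega\mathcal{Z}_K)$ by the split Hopf algebra extension \eqref{eq_Hopf}, this identifies $h(g)$ with $[u_i,h(\hat f)]$ and the desired equality follows.

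The main technical obstacle is careful bookkeeping of sign conventions — the Whitehead--Samelson correspondence and the anti-commutativity of the graded Pontryagin commutator — and verifying that the adjoint of the explicit lift $[\overline{\mu_i,\omega\circ f}]$ constructed in \cref{thm_Hurewicz_equiv} realises the commutator on the nose (as opposed to its negative). This should ultimately reduce to the homotopy-commutative diagram built in the proof of \cref{thm_Hurewicz_equiv}, where the lift was already presented as a composite through $\bar a_i$ and a pinch map: one simply takes adjoints throughout that diagram and reads off the Samelson product on the looped side.
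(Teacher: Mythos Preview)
Your proposal is correct and follows essentially the same route as the paper: both arguments use that the Samelson product $\langle\hat\mu_i,\widehat{\omega\circ f}\rangle$ is adjoint to the Whitehead product $[\mu_i,\omega\circ f]$, that its Hurewicz image is the commutator $[u_i,h(\hat f)]$, and that the lift $\ell$ (your $g$) of this Samelson product through the injection $(\Omega\omega)_*$ is adjoint to the unique lift $[\overline{\mu_i,\omega\circ f}]$, whence $\sigma([u_i,h(\hat f)])=h([\overline{\mu_i,\omega\circ f}])=\lambda_i h(f)$ by \cref{thm_Hurewicz_equiv}. The paper simply organises this more directly by working in $\Omega\mathcal{Z}_K$ from the outset rather than first passing through $H_*(DJ_K)$, and does not dwell on the sign bookkeeping you flag as an obstacle.
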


\begin{proof}
The adjoint $\widehat{\omega\circ f} \in\pi_{n-1}(\Omega DJ_K)$ of $\omega\circ f \in \pi_n(DJ_K)$ is given by $\Omega\omega\circ\hat{f}$, and since $\Omega\omega$ induces the subalgebra inclusion $H_*(\Omega\mathcal{Z}_K) \to H_*(\Omega DJ_K)$ of~\eqref{eq_Hopf}, we denote the Hurewicz image of both $\hat{f}$ and $\widehat{\omega\circ f}$ simply by $h(\hat{f})$. The Samelson product
\[
\langle \hat{\mu}_i,\widehat{\omega\circ f} \rangle \colon S^1\wedge S^{n-1} \xrightarrow{\hat{\mu}_i\wedge\widehat{\omega\circ f}} \Omega DJ_K\wedge\Omega DJ_K \xrightarrow{[\;,\;]} \Omega DJ_K
\]
admits a lift $\ell\colon S^n \to \Omega\mathcal{Z}_K$ with Hurewicz image $h(\ell)$ given by the commutator $[u_i,h(\hat{f})] \in H_n(\Omega\mathcal{Z}_K)\subset H_n(\Omega DJ_K)$. Since $\langle \hat{\mu}_i,\widehat{\omega\circ f} \rangle$ is adjoint to the Whitehead product $[\mu_i,\omega\circ f] \in \pi_{n+1}(DJ_K)$, it follows that $\ell$ is adjoint to the unique lift $[\overline{\mu_i,\omega\circ f}] \in \pi_{n+1}(\mathcal{Z}_K)$ of the Whitehead product. Therefore, by the equation following~\eqref{eq_susp_hom}, $\sigma\big( [u_i,h(\hat{f})] \big)$ equals the Hurewicz image of $[\overline{\mu_i,\omega\circ f}]$, and the corollary follows by \cref{thm_Hurewicz_equiv}.
\end{proof}

\begin{remark}
In the Serre spectral sequence of the path space fibration 
\[ \Omega\mathcal{Z}_K \longrightarrow P\mathcal{Z}_K \longrightarrow \mathcal{Z}_K, \] 
every spherical class $h(f)\in H_n(\mathcal{Z}_K)$ transgresses to the Hurewicz image $h(\hat{f}) \in H_{n-1}(\Omega\mathcal{Z}_K)$ of the adjoint of $f$. It follows from \cref{cor_equiv} that each sweep out $\lambda_ih(f) \in H_{n+1}(\mathcal{Z}_K)$ transgresses to the commutator $[u_i,h(\hat{f})] \in H_n(\Omega\mathcal{Z}_K)$. 
\end{remark}

For each minimal non-face $I\notin K$, consider the natural map
\[
\omega_I\colon S^{2|I|-1} = \mathcal{Z}_{K_I} \to \mathcal{Z}_K
\]
induced by the inclusion $K_I \hookrightarrow K$. Since the Hurewicz images of these maps form a basis for $\Ext_S^1(k[K],k) \subseteq H_*(\mathcal{Z}_K;k)$, they also generate the quasi-linear strand of $k[K]$ as a $\Lambda$-module. 
Taking the maps $\omega_I$ as $f\in\pi_*(\mathcal{Z}_K)$ in \cref{thm_Hurewicz_equiv} and \cref{cor_equiv} therefore leads to a description of the lifts of iterated Whitehead products and iterated commutators
\[
[\overline{\mu_{j_1},[\mu_{j_2},\ldots[\mu_{j_r},\omega\circ\omega_I]\ldots]}] \in \pi_*(\mathcal{Z}_K), \qquad
[u_{j_1},[u_{j_2},\ldots[u_{j_r},h(\hat{w}_I)]\ldots]] \in H_*(\Omega\mathcal{Z}_K;k)
\]
that map onto the quasi-linear strand via the Hurewicz map $h\colon \pi_*(\mathcal{Z}_K) \to H_*(\mathcal{Z}_K;k)$ and homology suspension $\sigma\colon H_{*-1}(\Omega\mathcal{Z}_K;k) \to H_*(\mathcal{Z}_K;k)$, respectively.

Below we let $w_I = h(\hat{\omega}_I) \in H_{2|I|-2}(\Omega\mathcal{Z}_K;k)$
denote the Hurewicz image of the adjoint of $\omega_I$.

\begin{corollary} \label{cor_brackets}
Let $I\notin K$ be a minimal non-face of $K$ and let $j_1,\ldots,j_r \in [m]\smallsetminus I$. If the map
\[ 
\widetilde{H}_*(S^{|I|-2};k)=\widetilde{H}_*(K_I;k) \longrightarrow \widetilde{H}_*(K_{I\cup \{j_1,\ldots, j_r\}};k) 
\]
induced by the inclusion is nonzero, then the following hold: 
\begin{enumerate}[label={\normalfont(\arabic*)}]
\item the Whitehead product $[\mu_{j_1},[\mu_{j_2},\ldots[\mu_{j_r},\omega\circ\omega_I]\ldots]] \in \pi_*(DJ_K)$ is nonzero; \label{part1}
\item the commutator $[u_{j_1},[u_{j_2},\ldots[u_{j_r},w_I]\ldots]] \in H_*(\Omega \mathcal{Z}_K;k)\subset H_*(\Omega DJ_K;k)$ 
is nonzero. \label{part2}
\end{enumerate}
\end{corollary}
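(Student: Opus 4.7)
The plan is to compute the class $\lambda_{j_1}\cdots\lambda_{j_r}h(\omega_I)\in H_*(\mathcal{Z}_K;k)$ in two ways: topologically, as the Hurewicz image of an iterated Whitehead product lift via \cref{thm_Hurewicz_equiv}, and homologically, as the suspension of an iterated commutator via \cref{cor_equiv}. The hypothesis will then force both of these to be nonzero.

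First, I would verify that $\lambda_{j_1}\cdots\lambda_{j_r}h(\omega_I)$ itself is nonzero. Under the Hochster isomorphism of \cref{prop_homology_Z_K}, the class $h(\omega_I)$ is represented in the co-Koszul complex by the cycle $\sum_{i\in I}v_{I\smallsetminus i}\lambda_i\in R^1(K)_I$ (see \cref{rem_Hoch_strand}), corresponding to a fundamental class of $K_I=\partial\Delta^{|I|-1}$. Since this isomorphism is $\Lambda$-equivariant and the $\Lambda$-action on $\bigoplus_U\widetilde{C}_*(K_U;k)$ is induced, up to sign, by the inclusions $K_U\hookrightarrow K_{U\cup\{j\}}$, iterated sweeping corresponds to the image of the fundamental class under $K_I\hookrightarrow K_{I\cup\{j_1,\ldots,j_r\}}$, which is nonzero by hypothesis.

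Next, I would iterate \cref{thm_Hurewicz_equiv}. Setting $f_0=\omega_I$ and inductively $f_s=[\overline{\mu_{j_{r-s+1}},\omega\circ f_{s-1}}]\in\pi_*(\mathcal{Z}_K)$, each application of the theorem gives $h(f_s)=\lambda_{j_{r-s+1}}h(f_{s-1})$, while $\omega\circ f_s=[\mu_{j_{r-s+1}},\omega\circ f_{s-1}]$ by the definition of the lift. After $r$ steps,
\[
h\bigl([\overline{\mu_{j_1},[\mu_{j_2},\ldots[\mu_{j_r},\omega\circ\omega_I]\ldots]}]\bigr) \;=\; \lambda_{j_1}\cdots\lambda_{j_r}h(\omega_I),
\]
which is nonzero by the previous step. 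In particular the lift is nonzero in $\pi_*(\mathcal{Z}_K)$. Because $\pi_n(T^m)=0$ for all $n\geqslant 2$, the long exact sequence of the fibration~\eqref{fib} gives an isomorphism $\omega_*\colon\pi_n(\mathcal{Z}_K)\to\pi_n(DJ_K)$ for $n\geqslant 3$, and this is the range in which the iterated Whitehead product lives (since $|I|\geqslant 2$ and $r\geqslant 1$, which is the only nontrivial case). Hence the Whitehead product $[\mu_{j_1},[\mu_{j_2},\ldots[\mu_{j_r},\omega\circ\omega_I]\ldots]]\in\pi_*(DJ_K)$ is also nonzero, establishing part~(1).

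For part~(2), I would iterate \cref{cor_equiv} in parallel. Its proof actually constructs, for each $f_{s-1}$, a lift $\ell_s=\hat{f}_s\in\pi_*(\Omega\mathcal{Z}_K)$ of the Samelson product whose Hurewicz image equals $[u_{j_{r-s+1}},h(\hat{f}_{s-1})]$. Starting from $h(\hat{f}_0)=w_I$ and iterating, this identifies $h(\hat{f}_r)$ with the full iterated commutator $[u_{j_1},[u_{j_2},\ldots[u_{j_r},w_I]\ldots]]$, and \cref{cor_equiv} gives
\[
\sigma\bigl([u_{j_1},[u_{j_2},\ldots[u_{j_r},w_I]\ldots]]\bigr) \;=\; \lambda_{j_1}\cdots\lambda_{j_r}h(\omega_I) \;\neq\; 0.
\]
Therefore the iterated commutator is nonzero in $H_*(\Omega\mathcal{Z}_K;k)$, hence also in $H_*(\Omega DJ_K;k)$ via the Hopf algebra inclusion in~\eqref{eq_Hopf}. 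The main technical point is the first step, tracking the $\Lambda$-equivariance of the Hochster isomorphism carefully enough to match iterated sweeps with the inclusion-induced map on fundamental classes; signs arise in this identification but are irrelevant for the non-vanishing statement that is actually required.
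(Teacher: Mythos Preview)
Your proof is correct and follows essentially the same approach as the paper: identify the iterated sweep $\lambda_{j_1}\cdots\lambda_{j_r}h(\omega_I)$ with the inclusion-induced map on the fundamental class via the Hochster isomorphism of \cref{prop_homology_Z_K}, then iterate \cref{thm_Hurewicz_equiv} for part~(1) and \cref{cor_equiv} for part~(2), using injectivity of $\omega_*$ on higher homotopy groups to pass between $\pi_*(\mathcal{Z}_K)$ and $\pi_*(DJ_K)$. The paper's proof is more terse but the logic is identical; your version simply spells out the induction and the degree check for $\omega_*$ explicitly.
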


\begin{proof}
Under the isomorphism $H_*(\mathcal{Z}_K;k) \cong \bigoplus_{U\subseteq[m]}\widetilde{H}_*(K_U;k)$ of \cref{prop_homology_Z_K}, the iterated sweep out $\lambda_{j_1}\!\cdots\lambda_{j_r}h(\omega_I)$ of the generator $h(\omega_I) \in \Ext^1_S(k[K],k) \subseteq H_*(\mathcal{Z}_K;k)$ corresponds to the image of a generator under $\widetilde{H}_*(K_I;k) \to \widetilde{H}_*(K_{I\cup \{j_1,\ldots, j_r\}};k)$. Part~\ref{part1} therefore follows by iteratively applying \cref{thm_Hurewicz_equiv} and noting that the nontriviality of the Whitehead product is equivalent to the nontriviality of a lift in $\pi_*(\mathcal{Z}_K)$ since $\omega_*\colon \pi_*(\mathcal{Z}_K) \to \pi_*(DJ_K)$ is injective. Similarly, part~\ref{part2} follows by iteratively applying \cref{cor_equiv}.
\end{proof}

\begin{remark}
For a minimal non-face $I=\{i_1,\ldots,i_q\}\notin K$, Abramyan and Panov identify the natural map $\omega_I\colon S^{2|I|-1} = \mathcal{Z}_{K_I} \to \mathcal{Z}_K$ with the lift of a (non-iterated) higher Whitehead product $[\mu_{i_1},\ldots,\mu_{i_q}] \in \pi_*(DJ_K)$ in~\cite{AP}, where some combinatorial criteria are given for the nonvanishing of iterated higher Whitehead products of a particular form. We note that~\cite[Theorem~5.2]{AP} applies to the iterated Whitehead product in \cref{cor_brackets} only when $r=1$, implying that $[\mu_{j_1},\omega\circ\omega_I]=[\mu_{j_1},[\mu_{i_1},\ldots,\mu_{i_q}]]$ is trivial if and only if $K$ contains the cone $K_I*\{j_1\}$. Since the latter condition is equivalent to the inclusion $\partial\Delta^{|I|-1}=K_I \hookrightarrow K_{I\cup\{j_1\}}$ inducing the trivial map in reduced homology, \cref{cor_brackets} is a natural partial extension of the criterion in~\cite[Theorem~5.2]{AP}.
\end{remark}

\begin{remark}
When $I=\{i,j\} \notin K$ is a missing edge, $w_I\in H_2(\Omega\mathcal{Z}_K;k)$ is given by the commutator $[u_i,u_j]$, and by \cite[Theorem~4.3]{GPTW}, the iterated commutators $[u_{j_1},[u_{j_2},\ldots[u_{j_r},w_I]\ldots]]$ from \cref{cor_brackets}\ref{part2} generate $H_*(\Omega\mathcal{Z}_K;k)$ if $K$ is flag, or equivalently, $k[K]$ is Koszul. A similar statement holds for all Koszul quotient algebras $R=S/I$. Indeed, it is well known that $\Ext_R(k,k)\cong U(L)$ is the universal envelope of a graded Lie algebra $L$, and that when $R$ is Koszul $\Ext_R(k,k)$ is generated by the degree one elements $L^1$. It follows that $L^{\geqslant 2}$ is spanned by elements of the form
\[
[L^1,[L^1,\ldots[L^1,[L^1,L^1]]\ldots]]
\]
with bracket length at least two. The algebra $\Ext_{R\otimes^\mathsf{L}_Sk}(k,k)$ (which corresponds to $H_*(\Omega\mathcal{Z}_K;k)$ when $R=k[K]$ is a Stanley--Reisner ring) is isomorphic to the subalgebra $U(L^{\geqslant 2})$. It follows that $\Ext_{R\otimes^\mathsf{L}_Sk}(k,k)\cong U(L^{\geqslant 2})$ is generated by iterated brackets of elements of $L^1$.
\end{remark}

\section{Moment-angle manifolds associated to almost linear resolutions}
\label{sec_manifolds}

In this section we study the homotopy types of moment-angle manifolds associated to Stanley--Reisner ideals with almost linear resolutions. We begin by situating this class of ideals within a much larger one (which also includes componentwise almost linear ideals, see \cref{se_CAL_algebra}), to which our results also apply. 

Recall that $\mathcal{Z}_K$ is a manifold when $K$ is Gorenstein over $\mathbb{Z}$ (equivalently, over all fields $k$). In this case, $I_K$ is rarely\footnote{By a result of Nagel and R\"omer~\cite[Theorem~3.1]{NR}, a Gorenstein graded ideal $I\subset S$ is componentwise linear if and only if $I$ is a complete intersection whose minimal generators, except possibly one, are linear forms. Using this, it is easy to show that for a Gorenstein$^*$ comlpex $K$, $I_K$ is componentwise linear if and only if $\mathcal{Z}_K$ is homeomorphic to $S^{2n+1} \times\prod_{i=1}^rS^1$ for some $n,r\geqslant 0$. Note that linear forms appearing among the generators of $I_K$ requires $K$ to have ghost vertices.} componentwise linear, and $\mathcal{Z}_K$ cannot be homotopy equivalent to a wedge of spheres unless $K=\partial\Delta^n*\Delta^q$, in which case $\mathcal{Z}_K\simeq S^{2n+1}$. Nonetheless, the minimal free resolution of $I_K$ very often displays as much linearity as possible, in the sense of $I_K$ having maximal Green--Lazarsfeld index or maximal quasi-linear strand, subject to the Poincar\'e duality imposed by Gorensteinness. In these cases we show that the associated manifold $\mathcal{Z}_K$ has the homotopy type of a wedge of spheres with one cell attached.

A simplicial complex $K$ is called \emph{Gorenstein$^*$} over $k$ if it is Gorenstein over $k$, and $K$ is not a cone. Examples include all sphere triangulations and triangulations of homology spheres, more generally.

\subsection*{Almost linear resolutions and almost quasi-Koszul modules}

We continue to assume that $S=k[v_1,\ldots,v_m]$, although the following definition is sensible for any standard graded $k$-algebra~$S$.

\begin{definition}\label{def_AQK}
A finitely generated graded $S$-module $M$ is $\emph{almost quasi-Koszul}$ if the quasi-linear strand of $M$ contains $\Ext_S^{<p}(M,k)$, where $p=\pd_S(M)$ (see \cref{def_strand} for the definition of the quasi-linear strand).
\end{definition}

Recall from \cref{se_lin_background} that $M$ has an almost linear resolution when $M$ is generated in a single degree $d$ and $\Ext_S^{<p}(M,k)$ is concentrated along the linear strand $\bigoplus_{i\geqslant 0}\Ext_S^i(M,k)_{i+d}$. Since the linear strand agrees with the quasi-linear strand when $M$ is equigenerated, it follows that any $S$-module with an almost linear resolution is an almost quasi-Koszul module. The converse is not true (just as modules with linear resolutions are strictly contained in the class of quasi-Koszul modules, see \cref{se_lin_background}), and almost quasi-Koszul modules need not be equigenerated; see \cref{sec_examples} for many examples.

The following is analogous to the characterisation of quasi-Koszul Stanley--Reisner ideals in \cref{prop_HMF}. The \emph{deletion complex} $K\smallsetminus i$ is the full subcomplex of $K$ on the vertex set $[m]\smallsetminus\{i\}$.

\begin{proposition} \label{prop_almost_HMF}
Let $K$ be a Gorenstein$^*$ complex over $k$ on the vertex set $[m]$. Then the following are equivalent:
\begin{enumerate}[label={\normalfont(\alph*)}]
\item $I_K$ is an almost quasi-Koszul module; \label{aqk1}
\item $I_{K\smallsetminus i}$ is a quasi-Koszul module for all $i\in[m]$; \label{aqk2}
\item for each proper subset $U\subsetneq[m]$, $\widetilde{H}_*(K_U;k)$ is generated as a $k$-module by missing faces of $K$ (in the sense of~\cref{def_HMF}). \label{aqk3}
\end{enumerate}
\end{proposition}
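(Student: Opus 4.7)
The plan is to establish the two equivalences (b) $\iff$ (c) and (a) $\iff$ (c) separately. The first follows from \cref{prop_HMF} applied to the deletion complexes; the second combines \cref{rem_Hoch_strand} with the self-duality of Betti numbers of the Gorenstein ring $k[K]$ to match the range of Ext-degrees in (a) with the range of possibly non-vanishing homology of full subcomplexes $K_U$ for $U\subsetneq[m]$.

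For (b) $\iff$ (c), the key observation is that for each $i\in[m]$, the full subcomplexes of $K\smallsetminus i$ are exactly the $K_U$ with $U\subseteq[m]\smallsetminus\{i\}$, and the minimal non-faces of $K\smallsetminus i$ are precisely the minimal non-faces of $K$ not containing~$i$. By \cref{prop_HMF}, $I_{K\smallsetminus i}$ is quasi-Koszul iff for every such $U$, $\widetilde{H}_*(K_U;k)$ is generated by images of inclusions from minimal non-faces of $K$ contained in $U$. Since every $U\subsetneq[m]$ lies in $[m]\smallsetminus\{i\}$ for some $i$, collecting these conditions over all $i$ is equivalent to (c).

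For (a) $\iff$ (c), write $n=\dim K+1$, so that $\pd_S(I_K)=m-n-1$ by Cohen--Macaulayness of the Gorenstein ring $k[K]$. Under the Hochster isomorphism $\Ext_S^i(k[K],k)_U\cong\widetilde{H}_{|U|-i-1}(K_U;k)$, \cref{rem_Hoch_strand} identifies the quasi-linear strand of $k[K]$ with the subspace of $\bigoplus_U\widetilde{H}_*(K_U;k)$ spanned by images $\widetilde{H}_*(K_I;k)\to\widetilde{H}_*(K_U;k)$ for minimal non-faces $I\subseteq U$. Condition (a) then translates to: for every $U\subseteq[m]$ and every $d\in[|U|-m+n,\,|U|-2]$, the summand $\widetilde{H}_d(K_U;k)$ lies in this strand. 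For $U=[m]$ the range $[n,m-2]$ contains no nonzero homology, since $K$ is Gorenstein$^*$ of dimension $n-1$. For $U\subsetneq[m]$ I claim that every $d$ with $\widetilde{H}_d(K_U;k)\neq 0$ in fact lies in $[|U|-m+n,\,|U|-2]$, whence the (a) condition at such $U$ becomes the (c) condition at $U$. The claim follows from the self-duality $\beta_{i,U}(k[K])=\beta_{m-n-i,\,[m]\smallsetminus U}(k[K])$ of the minimal free resolution of Gorenstein $k[K]$, which via Hochster rewrites as $\dim\widetilde{H}_d(K_U;k)=\dim\widetilde{H}_{n-2-d}(K_V;k)$ with $V=[m]\smallsetminus U$. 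Since $V\neq\varnothing$, $K_V$ is a complex on $|V|\geqslant 1$ vertices whose reduced homology vanishes above degree $|V|-2$ (either $K_V=\Delta^{|V|-1}$ is contractible, or $\dim K_V\leqslant|V|-2$), forcing $d\geqslant n-|V|=|U|-m+n$; applying the same dimension argument to $K_U$ directly yields $d\leqslant|U|-2$.

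The main obstacle is the range-matching step in (a) $\iff$ (c), for which the Gorenstein self-duality $\beta_{i,U}(k[K])=\beta_{m-n-i,\,[m]\smallsetminus U}(k[K])$ is essential. Once this classical fact about Gorenstein$^*$ Stanley--Reisner rings is invoked, the translation via \cref{rem_Hoch_strand} delivers the equivalence cleanly.
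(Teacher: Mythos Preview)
Your proof is correct and, for (b) $\Leftrightarrow$ (c), essentially identical to the paper's. For (a) $\Leftrightarrow$ (c) you reach the same conclusion by a somewhat more elaborate route: you translate (a) into a degree-range condition $d\in[|U|-m+n,|U|-2]$ on each Hochster summand and then invoke the full multigraded self-duality $\beta_{i,U}(k[K])=\beta_{m-n-i,[m]\smallsetminus U}(k[K])$ to show that, for $U\subsetneq[m]$, all nonzero $\widetilde{H}_d(K_U;k)$ already falls in this range. The paper instead uses only the simpler consequence of Gorenstein$^*$-ness that $\Ext_S^{m-n}(k[K],k)$ is one-dimensional and corresponds to the single summand $\widetilde{H}_{n-1}(K;k)$, which immediately yields $\Ext_S^{<m-n}(k[K],k)\cong\bigoplus_{U\subsetneq[m]}\widetilde{H}_*(K_U;k)$ and makes the equivalence with (c) via \cref{rem_Hoch_strand} a one-line observation. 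Your argument buys nothing extra here, but it is a valid alternative; the paper's version avoids the degree bookkeeping entirely.
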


\begin{proof}
Let $p=\pd_S(I_K)$. Since $K$ is Gorenstein$^*$, $\Ext_S^p(I_K,k)=\Ext_S^{p+1}(k[K],k)$ is a $1$-dimensional $k$-vector space corresponding to the summand $\widetilde{H}_*(K;k)$ in the Hochster formula, so we have
\[
\Ext_S^{<p+1}(k[K],k) \cong \bigoplus_{U\subsetneq[m]} \widetilde{H}_*(K_U;k).
\]
The equivalence of \ref{aqk1} and \ref{aqk3} therefore follows from \cref{prop_homology_Z_K} and the Hochster formula interpretation of the quasi-linear strand given in \cref{rem_Hoch_strand}. Since every full subcomplex $K_U$ with $U\subsetneq[m]$ is a full subcomplex of $K\smallsetminus i$ for some $i\in[m]$,  the equivalence of \ref{aqk2} and \ref{aqk3} follows from \cref{prop_HMF}. 
\end{proof}

Among Gorenstein complexes satisfying the equivalent conditions above, those for which $I_K$ has an almost linear resolution can be characterised in purely combinatorial terms. A simplicial $(n-1)$-sphere $K$ is \emph{neighbourly} if every set of $\lfloor\frac{n}{2}\rfloor$ vertices forms a face of $K$. We define neighbourly Gorenstein$^*$ complexes similarly.

\begin{proposition} \label{prop_alin}
Let $K\neq\partial\Delta^{m-1}$ be a Gorenstein$^*$ complex over $k$. Then $I_K$ has an almost linear resolution if and only if $K$ is odd-dimensional and neighbourly.
\end{proposition}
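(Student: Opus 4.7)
The plan is to leverage the symmetry of the graded Betti table of $k[K]$ coming from the Gorenstein$^*$ hypothesis, together with Hochster's formula and the combinatorial meaning of neighbourliness. Set $p = \pd_S k[K] = m - n$, so that $\pd_S I_K = p - 1$; the excluded case $K = \partial\Delta^{m-1}$ is precisely the case $p = 1$, which would render the almost linear condition vacuous. Combining the multigraded Gorenstein symmetry $\beta_{i,U}(k[K]) = \beta_{p - i,\, [m]\smallsetminus U}(k[K])$ with the multigraded Hochster formula $\beta_{i,U}(k[K]) = \dim_k \widetilde{H}_{|U| - i - 1}(K_U; k)$ yields the combinatorial Alexander duality
\[
\widetilde{H}_h(K_U; k) \;\cong\; \widetilde{H}_{n - h - 2}(K_{[m]\smallsetminus U}; k),
\]
which will be used in both directions.

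For the forward direction, suppose $I_K$ has an almost linear resolution. Then $I_K$ is equigenerated in some degree $d$, so every minimal non-face of $K$ has size exactly $d$ and $K$ is $(d-1)$-neighbourly; since $K \neq \partial\Delta^{m-1}$ we have $p \geqslant 2$ and $\beta_{1,d}(k[K]) > 0$. Almost linearity translates into $\beta_{i,j}(k[K]) = 0$ for $1 \leqslant i \leqslant p - 1$ unless $j = i + d - 1$. The Gorenstein symmetry mirrors this diagonal onto the dual one $j = i + (n - d + 1)$, and for both to accommodate the nonzero class $\beta_{1,d}(k[K])$ we need $d - 1 = n - d + 1$, forcing $d = (n+2)/2$. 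In particular $n$ is even, $\dim K = n - 1$ is odd, and writing $n = 2k$ the complex $K$ is $k$-neighbourly, i.e.\ neighbourly.

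For the converse, write $n = 2k$ and assume $K$ is $k$-neighbourly. Every full subcomplex $K_U$ with $|U| \geqslant k$ contains the $(k-1)$-skeleton of the simplex on $U$, which is $(k-2)$-connected, giving $\widetilde{H}_i(K_U; k) = 0$ for $i \leqslant k - 2$. Applying the same statement to $K_{[m]\smallsetminus U}$ through the Alexander duality above forces $\widetilde{H}_i(K_U; k) = 0$ also for $i \geqslant k$ whenever $|U| \leqslant m - k$. Thus the reduced homology of $K_U$ is concentrated in degree $k - 1$ for $k \leqslant |U| \leqslant m - k$, and vanishes outside this range except at $U = [m]$, where $\widetilde{H}_{n-1}(K;k) = k$ contributes the socle. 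Feeding this back into Hochster's formula, the only possibly nonzero graded Betti numbers of $k[K]$ are $\beta_{0,0} = 1$, $\beta_{p,m} = 1$, and $\beta_{i,\,i+k}$ for $1 \leqslant i \leqslant p - 1$; shifting by one, $I_K$ is equigenerated in degree $d = k + 1$ with $\beta_{i,j}(I_K) = 0$ for $i < \pd_S I_K$ unless $j = i + d$, which is precisely the almost linear condition. The main technical point is the combinatorial Alexander duality used above, but as noted it follows directly from matching the multigraded Gorenstein symmetry against Hochster's formula, so no separate geometric argument is needed.
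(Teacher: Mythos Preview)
Your proof is correct and follows essentially the same strategy as the paper's: both directions hinge on the Gorenstein symmetry of the Betti table, and neighbourliness is used to control the lowest linear strand. The only difference is one of language---the paper argues directly with the graded duality $\beta_{i,j}(k[K])=\beta_{p-i,\,m-j}(k[K])$ on the Betti table, while you pass through Hochster's formula and phrase everything in terms of the reduced homology of full subcomplexes and combinatorial Alexander duality. One small point worth tightening: in the converse, merely \emph{containing} the $(k-1)$-skeleton of the simplex on $U$ does not by itself force $\widetilde H_i(K_U;k)=0$ for $i\leqslant k-2$; what you are really using is that this skeleton \emph{equals} the $(k-1)$-skeleton of $K_U$, so that $K_U$ is built from a $(k-2)$-connected complex by attaching cells of dimension $\geqslant k$.
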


\begin{proof}
Let $K\neq\partial\Delta^{m-1}$ be an $(n-1)$-dimensional Gorenstein$^*$ complex on $m$ vertices. Since $k[K]$ is Cohen--Macaulay, we have $\pd_S(k[K])=m-n>1$.

Assume $I_K$ has an almost linear resolution, that is, $I_K$ satisfies condition $N_{d,m-n-1}$ for some $d\geqslant 1$. Then the nonzero Betti numbers of $k[K]$ are
\[
\beta_{0,0}(k[K]),\; \beta_{1,d}(k[K]),\; \beta_{2,d+1}(k[K]), \ldots, \beta_{m+n-1,d+m+n-2}(k[K]),\; \beta_{m-n,2d+m-n-2}(k[K]),
\]
where the bidegree of the top Betti number $\beta_{m-n,2d+m-n-2}(k[K]) = \beta_{0,0}(k[K])=1$ is determined by Poincar\'e duality. On the other hand, since the top Betti number corresponds to $\Tor^S_{m-n}(k[K],k)_m \cong \widetilde{H}^{n-1}(K;k)$ (or since the Castelnuovo--Mumford regularity $\operatorname{reg}_S(k[K])\coloneqq \sup\{j-i \,:\, \beta_{i,j}(k[K])\neq0\}$ is well-known to equal $n$), we must have that $2d+m-n-2=m$, so $n=2d-2$ and $K$ is odd-dimensional. Since every minimal non-face of $K$ has cardinality $d=\frac{n}{2}+1$, every set of $\frac{n}{2}$ vertices must be a face of $K$, so $K$ is neighbourly.

Conversely, assume $K$ is odd-dimensional and neighbourly. Since $K\neq\partial\Delta^{m-1}$, it follows that $I_K$ is generated in degree $d=\frac{n}{2}+1$. Therefore, by Poincar\'e duality, $\beta_{0,0}(k[K])$, $\beta_{1,d}(k[K])$, $\beta_{m-n-1,m-d}(k[K])$ and $\beta_{m-n,m}(k[K])$ are the only nonzero Betti numbers in homological degrees $0$, $1$, $m-n-1$ or $m-n$. To show that $I_K$ has an almost linear resolution, it remains to show that the intermediary Betti numbers are concentrated along the linear strand $\{\beta_{i,i+d-1}(k[K])\}$. Suppose toward contradiction that $\beta_{i,i+j}(k[K])\neq 0$ for some $1<i<m-n-1$ and $j>d-1$. Then by Poincar\'e duality, $\beta_{m-n-i,m-i-j}\neq 0$. But $m-i-j-(m-n-i)=n-j<n-(d-1)=d-1$ (since $n=2d-2$), so this is a contradiction as $\beta_{i,i+j}(k[K])=0$ for $j<d-1$ and $i>0$.
\end{proof}

\begin{remark}
It follows from \cref{prop_alin} that, among sphere triangulations $K$, the property of $I_K$ having an almost linear resolution is independent of the base field $k$. For arbitrary simplicial complexes $K$, this no longer holds: if $K$ is the $6$-vertex triangulation of $\mathbb{R}P^2$, then $I_K$ has a linear (and therefore almost linear) resolution when $\operatorname{char}k\neq 2$, but it does not have an almost linear resolution when $\operatorname{char}k=2$.
\end{remark}

\subsection*{Homotopy types of moment-angle manifolds and their loop spaces}

For the remainder of this section, $I_K$ should be understood as a module over the ring $S=\mathbb{Z}[v_1,\ldots,v_m]$ of polynomials over the integers. We simply call $K$ a \emph{Gorenstein$^*$ complex} if it is Gorenstein$^*$ over $\mathbb{Z}$ (equivalently, over all fields). In this case, it is easy to see that $I_K$ is an almost quasi-Koszul $S$-module if and only if $I_K$ is an almost quasi-Koszul $k[v_1,\ldots,v_m]$-module for all fields $k$.

\begin{theorem} \label{thm_conn_sum}
Let $K\neq\partial\Delta^{m-1}$ be an $(n-1)$-dimensional Gorenstein$^*$ complex on~$m$ vertices. If $I_K$ is an almost quasi-Koszul module, then the following hold:
\begin{enumerate}[label={\normalfont(\arabic*)}]
\item $\mathcal{Z}_K$ is an $(m+n)$-manifold with $(m+n-1)$-skeleton homotopy equivalent to a wedge of spheres; \label{item1}
\item $\mathcal{Z}_K$ is rationally homotopy equivalent to a connected sum of sphere products with two spheres in each product; \label{item2}
\item $\Omega\mathcal{Z}_K$ is homotopy equivalent to a product of spheres and loop spaces of spheres.
\end{enumerate}
\end{theorem}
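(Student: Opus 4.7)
The plan is to establish the three statements in sequence, using the almost quasi-Koszul hypothesis through \cref{prop_almost_HMF} and the Hurewicz result of \cref{thm_Hurewicz_strand}.

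For part~(1), the manifold conclusion is immediate from Cai's theorem together with the Gorenstein$^*$ assumption. Writing $W = \operatorname{sk}_{m+n-1}\mathcal{Z}_K$, Poincar\'e duality reduces the computation of $\widetilde{H}_*(W;k)$ to the non-top part of $H_*(\mathcal{Z}_K;k)$, which by Hochster's formula is $\bigoplus_{U\subsetneq[m]}\widetilde{H}_*(K_U;k)$. By \cref{prop_almost_HMF}, the almost quasi-Koszul hypothesis identifies this with the quasi-linear strand of $I_K$ over every field, and \cref{thm_Hurewicz_strand} places the quasi-linear strand inside the Hurewicz image. Since $I_K \neq 0$ has all generators in degree $\geqslant 2$, $\mathcal{Z}_K$ and hence $W$ are at least $2$-connected, so the Hurewicz surjectivity (verified prime-by-prime and rationally as in \cref{cor_HMF}) together with Whitehead's theorem produces an integral equivalence $W \simeq \bigvee_\alpha S^{n_\alpha}$.

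For part~(2), write $\mathcal{Z}_K \simeq W \cup_f e^{m+n}$ for an attaching map $f\colon S^{m+n-1}\to W$. The cohomology ring of $\mathcal{Z}_K$ satisfies Poincar\'e duality with fundamental class in degree $m+n$, and inspecting the Tor algebra via Hochster's formula in the almost quasi-Koszul setting should show that every nontrivial cup product of positive-degree classes pairs a quasi-linear strand element with its Poincar\'e dual to give the top class. This exhibits $H^*(\mathcal{Z}_K;\mathbb{Q})$ as a connected-sum algebra $\#_i H^*(S^{p_i}\times S^{q_i})$. Rational formality of $\mathcal{Z}_K$ I would establish through the BGG interpretation of \cref{thm_min_res} (\cref{rem_linear_part_action}): the almost quasi-Koszul condition means that, below the top, the entire minimal free resolution of $k[K]$ coincides with its linear part, so the rational homotopy type is detected by the cohomology algebra and its $\Lambda$-action, both of which match those of the target connected sum.

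For part~(3), $\Omega W$ splits, via James's theorem and the Hilton--Milnor decomposition, as a weak product of factors $\Omega S^{n_\alpha}$, each of which is integrally a product of spheres and loop spaces of odd spheres. To push this splitting through the top-cell attachment, I would show that $f$ is \emph{inert} in the sense of Halperin--Lemaire, so that $\Omega\mathcal{Z}_K$ decomposes accordingly. The algebraic input is that $I_{K\smallsetminus i}$ is quasi-Koszul for every $i$ (\cref{prop_almost_HMF}), which translates, via \cref{cor_brackets}, to the statement that the loop spaces of all deletion subcomplexes $\mathcal{Z}_{K\smallsetminus i}$ are already generated by iterated commutators of Hurewicz elements; combined with the Poincar\'e-dual description of $f$ as an iterated Whitehead product of the spherical generators of $W$, this should force $\Omega f$ to be null-homotopic after passage to the product decomposition of $\Omega W$. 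The main obstacle is precisely this final step: converting the algebraic almost quasi-Koszulity into the geometric statement of integral inertness of $f$, as this is a strictly integral (not merely rational) splitting requirement and must be compatible with the product decomposition of $\Omega W$ rather than merely a rational one.
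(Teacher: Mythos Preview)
Your argument for part~(1) is correct and matches the paper's proof essentially line for line.

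For parts~(2) and~(3), however, there are genuine gaps, and the paper's route is both simpler and different from what you propose.

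In part~(2), your claim that ``the almost quasi-Koszul condition means that, below the top, the entire minimal free resolution of $k[K]$ coincides with its linear part'' is false. Almost quasi-Koszul says that $\Ext_S^{<p}(k[K],k)$ lies in the $\Lambda$-submodule generated by degree zero; it does \emph{not} say that the higher-degree entries $\delta_J$, $|J|>1$, of the differential vanish below the top. (Compare \cref{ex_AQK}, where $I_K$ is almost quasi-Koszul but the resolution has nonlinear entries throughout.) So the BGG argument you sketch does not establish formality, and the step from ``correct cohomology ring'' to ``correct rational homotopy type'' is missing. The paper instead observes that the multigraded Poincar\'e duality on $\Tor^S(\mathbb{Q}[K],\mathbb{Q})$, together with squarefreeness (which forces $x^2=0$ for any homogeneous $x$ of nonzero multidegree) and the wedge-of-spheres skeleton from part~(1), allows one to choose a symplectic basis for $H^*(\mathcal{Z}_K;\mathbb{Q})$; the rational equivalence with the connected sum then follows from Stasheff's uniqueness theorem~\cite[Theorem~1]{St} for simply connected Poincar\'e duality complexes with prescribed cohomology ring and $(N-1)$-skeleton.

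In part~(3), you correctly identify that proving integral inertness of the attaching map $f$ is the obstacle, and you do not overcome it; the commutator description from \cref{cor_brackets} and the deletion-complex Koszulity of \cref{prop_almost_HMF} do not by themselves force $\Omega f$ to be null-homotopic. The paper sidesteps the entire inertness discussion by invoking \cite[Proposition~3.2]{BT} directly: that result says that for any $2$-connected Poincar\'e duality complex whose sub-top skeleton is a wedge of spheres, the loop space decomposes as a product of spheres and loop spaces of spheres. Since part~(1) already supplies this hypothesis, part~(3) is immediate.
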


\begin{proof}
The assumptions on $K$ imply that the moment-angle manifold  is $(m+n)$-dimensional and $\pd_S(\mathbb{Z}[K])=m-n$, so we have that $H_*(\operatorname{sk}_{m+n-1}\mathcal{Z}_K) \cong \Ext_S^{<m-n}(\mathbb{Z}[K],\mathbb{Z})$. Since $I_K$ is almost quasi-Koszul, this portion of $H_*(\mathcal{Z}_K)\cong \Ext_S(\mathbb{Z}[K],\mathbb{Z})$ is contained in the quasi-linear strand and hence in the image of the Hurewicz map by \cref{thm_Hurewicz_strand}. Therefore $\operatorname{sk}_{m+n-1}\mathcal{Z}_K$ is homotopy equivalent to a wedge of spheres.

Since $\mathcal{Z}_K$ is a $2$-connected $(m+n)$-dimensional Poincar\'e duality complex with $(m+n-1)$-skeleton having the homotopy type of a wedge of spheres, it follows immediately from~\cite[Proposition~3.2]{BT} that $\Omega\mathcal{Z}_K$ is homotopy equivalent to a product of spheres and loop spaces of spheres.

It remains to prove~\ref{item2}. By Hochster's formula and the Gorenstein condition, $H^*(\mathcal{Z}_K;\mathbb{Q})\cong \Tor^S(\mathbb{Q}[K],\mathbb{Q})$ is a multigraded Poincar\'e duality algebra concentrated in squarefree multidegrees. In particular, any homogeneous element $x$ of a given (nonzero) multidegree satisfies $x^2=0$. Moreover, since $\operatorname{sk}_{m+n-1}\mathcal{Z}_K$ has the homotopy type of a wedge of spheres, it is possible to choose a symplectic basis for $H^*(\mathcal{Z}_K;\mathbb{Q})$, that is, a basis $\{1,x_1,\ldots,x_g,y_1,\ldots,y_g,z\}$ where $z$ is a generator of $H^{m+n}(\mathcal{Z}_K;\mathbb{Q})\cong \Tor^S_{m-n}(\mathbb{Q}[K],\mathbb{Q})_{[m]}$ and the only nonzero products among the positive degree basis elements are $x_iy_i=z$ for $i=1,\ldots,g$. Therefore $\mathcal{Z}_K$ has the cohomology ring of a connected sum of sphere products $\#_{i=1}^g (S^{|x_i|}\times S^{|y_i|})$, and both of these spaces are homotopy equivalent to $(\bigvee_{i=1}^gS^{|x_i|}\vee S^{|y_i|}) \cup_f e^{m+n}$ for some attaching map $f$. A result of Stasheff~\cite[Theorem~1]{St} on the uniqueness of the rational homotopy type of a simply connected $N$-dimensional Poincar\'e duality complex with a given cohomology ring and $(N-1)$-skeleton now implies the desired equivalence $\mathcal{Z}_K \simeq_\mathbb{Q} \#_{i=1}^g (S^{|x_i|}\times S^{|y_i|})$.
\end{proof}

An immediate application of \cref{thm_conn_sum} is the determination of the formality and Lusternik--Schnirelmann category for a large class of moment-angle manifolds.

\begin{corollary}
Let $K\neq\partial\Delta^{n-1}$ be a Gorenstein$^*$ complex. If $I_K$ is an almost quasi-Koszul module, then $\mathcal{Z}_K$ is a formal manifold with $\operatorname{cat}(\mathcal{Z}_K)=2$. \qed
\end{corollary}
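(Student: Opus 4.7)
The plan is to derive both claims from Theorem~\ref{thm_conn_sum}. By part~(2) of that theorem, $\mathcal{Z}_K$ is rationally homotopy equivalent to a connected sum of sphere products $N=\#_{i=1}^{g}(S^{p_i}\times S^{q_i})$. Such connected sums are well known to be formal over $\mathbb{Q}$: a pure Sullivan model can be written down directly from the symplectic Poincar\'e duality cohomology ring, or one invokes~\cite[Theorem~1]{St} to compare $N$ with a formal representative of the same rational homotopy type on the same wedge-of-spheres skeleton. Since formality is invariant under rational homotopy equivalence, $\mathcal{Z}_K$ is itself formal.

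For the LS-category equality, I would check the two inequalities separately. The upper bound $\operatorname{cat}(\mathcal{Z}_K)\leqslant 2$ is immediate from Theorem~\ref{thm_conn_sum}(1): writing $\mathcal{Z}_K\simeq W\cup_f e^{m+n}$ with $W=\operatorname{sk}_{m+n-1}\mathcal{Z}_K$ a wedge of spheres, we have $\operatorname{cat}(W)\leqslant 1$, and attaching a single cell raises LS-category by at most one. For the lower bound I appeal to the standard inequality that rational cup-length bounds LS-category from below: the symplectic basis $\{x_i,y_i,z\}$ of $H^{>0}(\mathcal{Z}_K;\mathbb{Q})$ produced in the proof of Theorem~\ref{thm_conn_sum}(2) contains positive-degree classes $x_1,y_1$ with $x_1 y_1=z\neq 0$ (the assumption $K\neq\partial\Delta^{m-1}$ ensures $g\geqslant 1$), so the cup-length, and hence $\operatorname{cat}(\mathcal{Z}_K)$, is at least~$2$.

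The only step requiring real care is the formality of a connected sum of sphere products; this is classical but benefits from an explicit citation or a brief direct construction of a formal model (one can, for example, take the quotient of the free graded-commutative algebra on the symplectic generators by the ideal of relations forced by Poincar\'e duality and observe that this CDGA has zero differential and the correct cohomology). Everything else is a routine combination of standard LS-category estimates with the conclusions of Theorem~\ref{thm_conn_sum}.
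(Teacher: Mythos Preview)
Your argument is correct and is exactly the route the paper intends: the corollary is marked with a \qed and no proof, signalling that both claims are immediate from \cref{thm_conn_sum}. Your unpacking of the formality via the rational equivalence with a connected sum of sphere products, and of $\operatorname{cat}(\mathcal{Z}_K)=2$ via the cone-length upper bound from part~(1) together with the cup-length lower bound from the symplectic basis in part~(2), is precisely the intended justification.
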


Minimally non-Golod complexes, introduced by Berglund and J\"ollenbeck in~\cite{BJ}, are simplicial complexes $K$ for which $\mathbb{Z}[K]$ is not Golod but $\mathbb{Z}[K\smallsetminus i]$ is Golod for all $i\in[m]$. Since the boundary complexes of stacked polytopes have almost quasi-Koszul Stanley--Reisner ideals (as we will see in \cref{sec_examples}), the next corollary is a vast generalisation of~\cite[Theorem~5]{BJ}.

\begin{corollary} \label{cor_mnGolod}
Let $K\neq\partial\Delta^{n-1}$ be a Gorenstein$^*$ complex. If $I_K$ is an almost quasi-Koszul module, then $K$ is minimally non-Golod.
\end{corollary}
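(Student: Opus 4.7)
I will verify the two clauses of minimal non-Golodness separately: that $\mathbb{Z}[K]$ itself fails to be Golod, and that each deletion $\mathbb{Z}[K\smallsetminus i]$ is Golod for every $i\in[m]$.

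For the first clause, my strategy is to produce a nontrivial product in $\operatorname{Tor}^S(\mathbb{Z}[K],\mathbb{Z}) \cong H^*(\mathcal{Z}_K;\mathbb{Z})$ via Poincar\'e duality on the moment-angle manifold. Since $K$ is Gorenstein$^*$ it is not a cone, and the hypothesis $K\neq\partial\Delta^{m-1}$ forces $K$ to admit a minimal non-face $I$ with $2\leqslant |I|\leqslant m-1$ (otherwise the only minimal non-face would be $[m]$, forcing $K=\partial\Delta^{m-1}$). The Hochster summand $\widetilde{H}^{|I|-2}(K_I;\mathbb{Q})=\widetilde{H}^{|I|-2}(\partial\Delta^{|I|-1};\mathbb{Q})\cong\mathbb{Q}$ then contributes a nonzero class $\omega\in H^{2|I|-1}(\mathcal{Z}_K;\mathbb{Q})$ of degree strictly less than $m+n$, and Poincar\'e duality on the closed orientable manifold $\mathcal{Z}_K$ supplies a cup-product partner $\beta$ with $\omega\cup\beta$ generating $H^{m+n}(\mathcal{Z}_K;\mathbb{Q})$. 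This yields a nontrivial product in $\operatorname{Tor}^S(\mathbb{Q}[K],\mathbb{Q})$, and hence in $\operatorname{Tor}^S(\mathbb{Z}[K],\mathbb{Z})$ by naturality, so $\mathbb{Z}[K]$ is not Golod.

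For the second clause, the hypothesis that $I_K$ is almost quasi-Koszul over $S=\mathbb{Z}[v_1,\ldots,v_m]$ is equivalent (by the section opening remark) to being so over $k[v_1,\ldots,v_m]$ for every field $k$. Applying \cref{prop_almost_HMF} over each such $k$ gives that $I_{K\smallsetminus i}$ is a quasi-Koszul module over every field; \cref{prop_HMF} then identifies $K\smallsetminus i$ as an HMF complex over every field, and \cref{cor_HMF} together with the subsequent remark on unlocalised splittings produces an integral homotopy equivalence $\mathcal{Z}_{K\smallsetminus i}\simeq \bigvee_\alpha S^{n_\alpha}$. A wedge of spheres has trivial cup products and vanishing (higher) Massey products in cohomology --- equivalently, $H_*(\Omega \bigvee_\alpha S^{n_\alpha};\mathbb{Z})$ is a free tensor algebra by the James--Milnor splitting --- so $\mathbb{Z}[K\smallsetminus i]$ is Golod.

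Overall the corollary is essentially a packaging of earlier results (\cref{prop_HMF}, \cref{cor_HMF}, \cref{prop_almost_HMF}), so I do not expect a serious obstacle beyond careful coefficient bookkeeping: one must ensure both that the integral wedge-of-spheres splitting of each $\mathcal{Z}_{K\smallsetminus i}$ genuinely forces Golodness of $\mathbb{Z}[K\smallsetminus i]$ with integer coefficients, and that the rational product furnished by Poincar\'e duality on $H^*(\mathcal{Z}_K)$ reflects a nontrivial product in integer Tor.
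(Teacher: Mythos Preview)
Your argument is correct. Both clauses are established: non-Golodness via Poincar\'e duality on the manifold $\mathcal{Z}_K$ (noting that any minimal non-face $I$ satisfies $|I|\leqslant n+1$, so $2|I|-1\leqslant 2n+1<m+n$ since $m\geqslant n+2$ when $K\neq\partial\Delta^{m-1}$), and Golodness of each deletion via \cref{prop_almost_HMF}, \cref{prop_HMF}, and \cref{cor_HMF}. The coefficient bookkeeping you flag is harmless: the Hochster summand for a minimal non-face is torsion-free, so integral Poincar\'e duality applies directly; and HMF over every $\mathbb{F}_p$ yields an integral wedge splitting by the argument in the proof of \cref{cor_Ndr}.

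The paper takes a different but closely related route. Rather than analysing the deletions $K\smallsetminus i$ directly via \cref{prop_almost_HMF}, it first invokes \cref{thm_conn_sum}\ref{item1} to obtain that $\operatorname{sk}_{m+n-1}\mathcal{Z}_K$ is a wedge of spheres, and then defers to \cite[Theorem~1.1]{Am}, whose proof observes that each $\mathcal{Z}_{K\smallsetminus i}$ embeds as a retract of this skeleton (since its top-dimensional cells have dimension at most $m+n-1$) and is therefore itself a wedge of spheres. Your approach is more self-contained within this paper, avoiding the external citation and the passage through \cref{thm_conn_sum}; the paper's approach is more concise and highlights that minimal non-Golodness follows formally from the wedge-of-spheres structure of the codimension-one skeleton alone.
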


\begin{proof}
Since $\operatorname{sk}_{m+n-1}\mathcal{Z}_K$ is homotopy equivalent to a wedge of spheres by \cref{thm_conn_sum}, this follows exactly as in the proof of~\cite[Theorem~1.1]{Am}.
\end{proof}

Panov and Theriault~\cite[Theorem~1.1]{PT} show that if $K$ is flag, then the natural map $\mathcal{Z}_{K^{(0)}} \to \mathcal{Z}_K$ admits a section (that is, a right homotopy inverse) after looping, where $K^{(0)}$ is the $0$-skeleton of $K$ consisting of $m$ disjoint vertices, implying that $\Omega\mathcal{Z}_K$ is a homotopy retract of $\Omega\mathcal{Z}_{K^{(0)}}$. Gorenstein complexes for which $I_K$ is almost quasi-Koszul provide a large class of simplicial complexes for which an analogous statement holds. Let $K^{(d)}$ denote the $d$-skeleton of $K$ consisting of all faces of dimension at most $d$ in $K$.

\begin{corollary} \label{cor_section}
Let $K\neq\partial\Delta^{n-1}$ be a Gorenstein$^*$ complex. If $I_K$ is an almost quasi-Koszul module with highest degree generator in degree $d+2$, then the natural map $\mathcal{Z}_{K^{(d)}} \to \mathcal{Z}_K$ admits a section after looping.
\end{corollary}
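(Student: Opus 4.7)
The strategy is to construct a lift of the skeletal inclusion $i\colon Y=\operatorname{sk}_{m+n-1}\mathcal{Z}_K\hookrightarrow\mathcal{Z}_K$ through the natural map $\mathcal{Z}_{K^{(d)}}\to\mathcal{Z}_K$, and then exploit the splitting of $\Omega\mathcal{Z}_K$ from \cref{thm_conn_sum} to deduce the desired section after looping.

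By \cref{thm_conn_sum}, $Y$ is a wedge of spheres, and under the almost quasi-Koszul hypothesis the entire homology $H_*(Y;k)\cong\Ext_S^{<m-n}(k[K],k)$ is contained in the quasi-linear strand. Combining \cref{thm_Hurewicz_strand} with \cref{rem_Hoch_strand} and iterating \cref{thm_Hurewicz_equiv}, $H_*(Y;k)$ admits a basis whose elements are Hurewicz images of iterated Whitehead product lifts
\[
[\overline{\mu_{j_1},[\mu_{j_2},\ldots,[\mu_{j_r},\omega\circ\omega_I]\ldots]}]\in\pi_*(\mathcal{Z}_K),
\]
for minimal non-faces $I\notin K$ and vertices $j_1,\ldots,j_r\in[m]\smallsetminus I$. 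The hypothesis that the highest degree generator of $I_K$ is in degree $d+2$ forces $|I|\leqslant d+2$, so $K_I=\partial\Delta^{|I|-1}\subseteq K^{(d)}$, and $\omega_I\colon S^{2|I|-1}=\mathcal{Z}_{K_I}\to\mathcal{Z}_K$ factors through $\mathcal{Z}_{K^{(d)}}$. Each $\mu_j\in\pi_2(DJ_K)$ lifts through $DJ_{K^{(d)}}$ since all vertices lie in $K^{(d)}$, so by naturality of the Whitehead product, the entire iterated bracket and its canonical lift to $\pi_*(\mathcal{Z}_K)$ factor through $\mathcal{Z}_{K^{(d)}}$. Using cellular approximation to factor each basis representative through $Y$ first, these Whitehead product representatives assemble via a homotopy equivalence $Y\simeq\bigvee_\alpha S^{n_\alpha}$ so that, summand-by-summand, the inclusion $i$ is realised by iterated Whitehead products; lifting each summand gives a map $j\colon Y\to\mathcal{Z}_{K^{(d)}}$ whose postcomposition with $\mathcal{Z}_{K^{(d)}}\to\mathcal{Z}_K$ is homotopic to $i$.

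The proof of \cref{thm_conn_sum} (via \cite[Proposition~3.2]{BT}) exhibits $\Omega\mathcal{Z}_K$ as a product of spheres and loop spaces of spheres, each factor arising by adjunction from a spherical class in $\pi_*(\mathcal{Z}_K)$ of degree less than $m+n$, which factors through $Y$ by cellular approximation. Multiplying these factor maps together produces a right homotopy inverse $s\colon\Omega\mathcal{Z}_K\to\Omega Y$ of $\Omega i$. The composite $\Omega j\circ s\colon\Omega\mathcal{Z}_K\to\Omega\mathcal{Z}_{K^{(d)}}$ is then the desired section, since its postcomposition with $\Omega(\mathcal{Z}_{K^{(d)}}\to\mathcal{Z}_K)$ equals $\Omega i\circ s\simeq\operatorname{id}$.

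The main obstacle lies in the construction of $j$, i.e., in identifying every basis element of $H_*(Y;k)$ with the Hurewicz image of an iterated Whitehead product assembled from the basic classes $\mu_j$ and $\omega_I$ with $|I|\leqslant d+2$. This identification hinges on the interplay between the almost quasi-Koszul hypothesis (which places $H_*(Y)$ entirely in the quasi-linear strand) and the explicit generation of the quasi-linear strand by these iterated products, as already established in \cref{thm_Hurewicz_strand} and \cref{thm_Hurewicz_equiv}.
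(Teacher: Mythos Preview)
Your proposal is correct and follows essentially the same approach as the paper's proof: both construct a lift of $\operatorname{sk}_{m+n-1}\mathcal{Z}_K\hookrightarrow\mathcal{Z}_K$ through $\mathcal{Z}_{K^{(d)}}$ by realising each wedge summand as an iterated Whitehead product lift built from $\omega_I$ and $\mu_j$, use $|I|\leqslant d+2$ to factor these through $\mathcal{Z}_{K^{(d)}}$, and then invoke the section of $\Omega i$ coming from \cite[Proposition~3.2]{BT}. The paper cites \cite[Proposition~3.2(ii)]{BT} directly for the loop section rather than reconstructing it, but otherwise the arguments coincide.
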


\begin{proof}
Assume $K$ is $(n-1)$-dimensional and has $m$ vertices, so that $\mathcal{Z}_K$ is an $(m+n)$-manifold. Consider the diagram
\[
\begin{tikzcd}
\mathcal{Z}_{K^{(d)}} \ar[r] & \mathcal{Z}_K\\
& \operatorname{sk}_{m+n-1}(\mathcal{Z}_K), \ar[ul,dashed] \ar[u]
\end{tikzcd}
\]
where the right vertical map induces the inclusion of the quasi-linear strand of $\mathbb{Z}[K]$ in homology, and $\operatorname{sk}_{m+n-1}(\mathcal{Z}_K)$ is a wedge of spheres up to homotopy by \cref{thm_conn_sum}\ref{item1}. To show that a lift exists making the diagram homotopy commute, we will show that each sphere in this wedge lifts through the top horizontal map. It follows from the proof of \cref{thm_Hurewicz_strand} (see also the discussion preceding \cref{cor_brackets}) that the homotopy equivalence in \cref{thm_conn_sum}\ref{item1} can be chosen so that each wedge summand maps to $\mathcal{Z}_K$ via a lifted Whitehead product $[\overline{\mu_{j_1},[\mu_{j_2},\ldots[\mu_{j_r},\omega\circ\omega_I]\ldots]}]$, where $I$ is a minimal non-face of $K$. Since $|I|\leqslant d+2$, every proper subset of $I$ is a face of $K^{(d)}$, so $I$ is also a minimal non-face of $K^{(d)}$. This implies that $K_I=K^{(d)}_I$, the inclusion $K_I\hookrightarrow K$ factors through $K^{(d)}$, and hence $\omega_I\colon S^{2|I|-1}=\mathcal{Z}_{K_I} \to \mathcal{Z}_K$ factors through $\mathcal{Z}_{K^{(d)}}$. Since each $\mu_j$ factors through $DJ_{K^{(d)}}$, the map $[\mu_{j_1},[\mu_{j_2},\ldots[\mu_{j_r},\omega\circ\omega_I]\ldots]]$ and its lift factor through $DJ_{K^{(d)}}$ and $\mathcal{Z}_{K^{(d)}}$, respectively, by naturality of the Whitehead product.

Finally, the right vertical map in the diagram admits a section after looping by~\cite[Proposition~3.2(ii)]{BT}, so the same is true of the top horizontal map by commutativity of the diagram.
\end{proof}

A graded $k$-algebra $A$ is called a \emph{$\mathcal{K}_2$ algebra} if the Yoneda algebra $\Ext_A(k,k)$ is generated in cohomological degrees $1$ and $2$. In addition to the Stanley--Reisner rings of flag complexes (which are Koszul algebras and therefore $\mathcal{K}_2$), it was shown in~\cite[Theorem~1.3]{CS} that $k[K]$ is a $\mathcal{K}_2$ algebra when $K^\vee$ is sequentially Cohen--Macaulay over $k$.

\begin{corollary}
Let $K$ be a Gorenstein$^*$ complex. If $I_K$ is an almost quasi-Koszul module, then $H_*(\Omega\mathcal{Z}_K;k)$ is a one-relator algebra multiplicatively generated by the iterated commutators $[u_{j_1},[u_{j_2},\ldots[u_{j_r},w_I]\ldots]]$ of \cref{cor_brackets}, and $k[K]$ is a $\mathcal{K}_2$ algebra.
\end{corollary}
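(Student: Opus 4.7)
The plan is to derive both claims from the structural decomposition of $\mathcal{Z}_K$ supplied by \cref{thm_conn_sum}: $\mathcal{Z}_K \simeq W \cup_f e^{m+n}$, where $W \simeq \operatorname{sk}_{m+n-1}\mathcal{Z}_K$ is a wedge of spheres and $f \in \pi_{m+n-1}(W)$ is the top-cell attaching map. For any wedge of spheres $W$, the Pontryagin ring $H_*(\Omega W; k)$ is a free tensor algebra $T(V)$ with one generator per sphere summand. The first step is to show that $H_*(\Omega\mathcal{Z}_K;k)$ is a one-relator quotient, i.e.\ that the natural surjection $T(V) = H_*(\Omega W;k)\twoheadrightarrow H_*(\Omega\mathcal{Z}_K;k)$ has kernel the two-sided ideal generated by a single element $r\in T(V)_{m+n-2}$ coming from the Hurewicz image of the adjoint of $f$. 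This is equivalent to inertness of $f$, which in turn follows from (and is morally equivalent to) the product decomposition of $\Omega\mathcal{Z}_K$ given in the loop space statement of \cref{thm_conn_sum}; alternatively, inertness can be extracted from an Adams--Hilton model calculation in which the single top-cell attachment contributes exactly one differential.

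The second step is to identify the free generators of $T(V)$ with the iterated commutators of \cref{cor_brackets}. As in the proof of \cref{cor_section}, the homotopy equivalence $W \simeq \bigvee_\alpha S^{d_\alpha}$ can be arranged so that each sphere summand is mapped into $\mathcal{Z}_K$ via a lifted iterated Whitehead product $[\overline{\mu_{j_1},[\mu_{j_2},\ldots[\mu_{j_r},\omega\circ\omega_I]\ldots]}]$, with $I\notin K$ a minimal non-face and $j_1,\ldots,j_r \in [m]\setminus I$. Passing to adjoints and applying \cref{cor_equiv} iteratively transports each such wedge generator to the iterated commutator $[u_{j_1},[u_{j_2},\ldots[u_{j_r},w_I]\ldots]]\in H_*(\Omega\mathcal{Z}_K;k)$. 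These classes therefore form the multiplicative generating set of the one-relator algebra $H_*(\Omega\mathcal{Z}_K;k)$ asserted in the corollary.

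For the $\mathcal{K}_2$ claim, combine the one-relator description with the Hopf algebra extension \eqref{eq_Hopf}. This extension identifies $\Ext_{k[K]}(k,k)\cong H_*(\Omega DJ_K;k)$ as an algebra generated by the classes $u_1,\ldots,u_m \in \Ext^1_{k[K]}(k,k)$ together with the subalgebra $H_*(\Omega\mathcal{Z}_K;k)$. By the preceding step, the latter is generated by iterated commutators of the $u_j$'s against the classes $w_I$, so it suffices to verify that each $w_I$ lies in $\Ext^2_{k[K]}(k,k)$. This follows because $w_I$ corresponds under the Hochster decomposition to the relation $v^I$ of internal degree $2|I|$ in $k[K]$, giving a class in $\Ext^{2,2|I|}_{k[K]}(k,k)$ whose total degree $2|I|-2$ is precisely the topological degree of $w_I \in H_{2|I|-2}(\Omega\mathcal{Z}_K;k)$. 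Hence $\Ext_{k[K]}(k,k)$ is generated in cohomological degrees $1$ and $2$.

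The main obstacle is the first step: establishing the one-relator structure of $H_*(\Omega\mathcal{Z}_K;k)$ requires proving inertness of the top-cell attaching map $f$. While this is strongly suggested by the loop space product decomposition of \cref{thm_conn_sum}, a self-contained verification may require either an Adams--Hilton/cobar argument or an appeal to results of Halperin--Lemaire or Anick, together with careful tracking of the grading conventions to confirm that the resulting relation sits in the predicted degree $m+n-2$.
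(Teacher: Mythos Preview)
Your proposal is correct and follows essentially the same route as the paper. The paper establishes surjectivity of $H_*(\Omega W;k)\to H_*(\Omega\mathcal{Z}_K;k)$ via the section-after-looping from \cite[Proposition~3.2(ii)]{BT} and Bott--Samelson, then obtains the one-relator structure by citing the Adams--Hilton computation in \cite[Proposition~4.1]{GISP}; this is exactly the ``Adams--Hilton/cobar argument'' you anticipated would be needed for inertness, so your acknowledged gap is filled precisely where you expected. One small correction: your appeal to the ``Hochster decomposition'' to place $w_I$ in $\Ext^2_{k[K]}(k,k)$ is loosely phrased (Hochster's formula concerns $\Ext_S(k[K],k)$, not the Yoneda algebra of $k[K]$), though your degree count via internal versus topological degree is sound and your conclusion that $w_I\in\Ext^2_{k[K]}(k,k)$ for every minimal non-face $I$ is correct---indeed cleaner than the paper's own phrasing, which for $|I|=2$ relies on $w_I=[u_i,u_j]$ being a product of degree-one classes.
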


\begin{proof}
As in the proof of \cref{cor_section}, the homotopy equivalence $\bigvee_iS^{n_i} \simeq \operatorname{sk}_{m+n-1}(\mathcal{Z}_K)$ of \cref{thm_conn_sum}\ref{item1} can be chosen so that each sphere maps to $\mathcal{Z}_K$ by a lifted Whitehead product $[\overline{\mu_{j_1},[\mu_{j_2},\ldots[\mu_{j_r},\omega\circ\omega_I]\ldots]}]$, and $\operatorname{sk}_{m+n-1}(\mathcal{Z}_K) \to \mathcal{Z}_K$ admits a section after looping. By the Bott--Samelson theorem, $H_*\bigl(\Omega\bigvee_iS^{n_i};k\bigr)$ is generated by the Hurewicz images of the obvious adjoint maps $S^{n_i-1} \to \Omega\bigvee_iS^{n_i}$. Since the loop map $\Omega\operatorname{sk}_{m+n-1}(\mathcal{Z}_K) \to \Omega\mathcal{Z}_K$ induces a surjective ring homomorphism in homology, it follows that $H_*(\Omega\mathcal{Z}_K;k)$ is generated by the Hurewicz images of the adjoints of the lifted Whitehead products, which are the commutators $[u_{j_1},[u_{j_2},\ldots[u_{j_r},w_I]\ldots]]$. An Adams--Hilton model computation exactly as in~\cite[Proposition~4.1]{GISP} shows that $H_*(\Omega\mathcal{Z}_K;k)$ is a one-relator algebra. Note that by~\eqref{eq_Hopf} the algebra $H_*(\Omega DJ_K;k) \cong \Ext_{k[K]}(k,k)$ is generated by generators of $H_*(\Omega\mathcal{Z}_K;k)$ together with the classes $u_i\in \Ext^1_{k[K]}(k,k)$. Since $w_I\in H_*(\Omega\mathcal{Z}_K;k) \subset \Ext_{k[K]}(k,k)$ is of cohomological degree $1$ (resp. $2$) for a minimal non-face of size $|I|=2$ (resp. $|I|>2$), we conclude that $k[K]$ is $\mathcal{K}_2$.
\end{proof}

\section{Componentwise almost linear ideals}
\label{se_CAL_algebra}

In this section we turn our attention to the more general algebraic setting of modules over Koszul rings. We will later specialise these results to Stanley--Reisner ideals in polynomial rings, in particular obtaining a characterisation of the Stanley--Reisner ideals with componentwise almost linear resolutions.

In \cref{se_lin_background} we stated a result of R\"omer characterising componentwise linear modules in terms of their Koszulity; see \cref{thm_complin_iff_Kos}. However, this result is of limited use in the case of Stanley--Reisner ideals since (for instance) Gorenstein ideals in polynomial ideals can never have componentwise linear resolutions unless they are generated by a single element.

We recall from \cref{se_lin_background} that a finitely generated graded $S$-module $M$ is called {componentwise linear in the first $r$ steps} if, for each $d$, the submodule $M_{\langle d\rangle}$ generated by degree $d$ elements of $M$ has a resolution that is linear in the first $r$ steps. 
The main goal of this section is to prove an analogue of R\"omer's Theorem for componentwise almost linear modules.

\begin{theorem} \label{thm_complin_steps}
Assume that $S$ is Koszul and let $M$ be a finitely generated graded $S$-module with a minimal free resolution $F\to M$. Fix $r>0$. Then $M$ is componentwise linear in the first $r+1$ steps if and only if  $H_n(\lin(F))=0$ for $0<n< r$ and (when $r\neq 0$) $H_0(\lin(F))= \gr(M)$.
\end{theorem}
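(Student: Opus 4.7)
The plan is to adapt Römer's filtration argument (underlying \cref{thm_complin_iff_Kos}) to the finite-step setting. The minimal resolution $F$ is filtered by the free subcomplexes $F_{\langle\leq d\rangle}$, with successive quotients $F_{\langle d\rangle}=F_{\langle\leq d\rangle}/F_{\langle\leq d-1\rangle}$ whose direct sum is $\lin(F)$. These subcomplexes fit into short exact sequences of complexes
\[
0 \longrightarrow F_{\langle\leq d-1\rangle} \longrightarrow F_{\langle\leq d\rangle} \longrightarrow F_{\langle d\rangle} \longrightarrow 0,
\]
and the associated long exact sequences in homology will drive both implications. Since $\lin(F)=\bigoplus_d F_{\langle d\rangle}$ splits by generating degree and $\gr(M) = \bigoplus_d M_{\langle\leq d\rangle}/M_{\langle\leq d-1\rangle}$, the hypotheses on $\lin(F)$ are equivalent to $H_n(F_{\langle d\rangle})=0$ for $0<n<r$ and $H_0(F_{\langle d\rangle})\cong M_{\langle\leq d\rangle}/M_{\langle\leq d-1\rangle}$ for every $d$.

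The technical core would be a lemma, proved by induction on $d$, stating that when $S$ is Koszul, the augmentation $F_{\langle\leq d\rangle}\to M_{\langle\leq d\rangle}$ induces $H_0(F_{\langle\leq d\rangle})\cong M_{\langle\leq d\rangle}$ and $H_n(F_{\langle\leq d\rangle})=0$ for $0<n<r$ if and only if $M_{\langle d'\rangle}$ satisfies $N_{d',r+1}$ for all $d'\leq d$. In other words, the filtered piece $F_{\langle\leq d\rangle}$ agrees with a minimal resolution of $M_{\langle\leq d\rangle}$ through the first $r$ syzygies precisely when the first $r+1$ steps of the resolution of each $M_{\langle d'\rangle}$ are linear. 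Given such a lemma, both directions of the theorem follow by applying the long exact sequence of the displayed short exact sequence: homological information is transported between $F_{\langle\leq d-1\rangle}$, $F_{\langle\leq d\rangle}$ and $F_{\langle d\rangle}$ inductively, and summing over $d$ produces the statement.

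The main obstacle is establishing the lemma. The forward direction (componentwise $N_{d',r+1}$ implies the homological statement) should be essentially bookkeeping: linearity of the differential in the first $r+1$ steps ensures that no non-linear terms cause $F_{\langle\leq d\rangle}$ to fail being a resolution of $M_{\langle\leq d\rangle}$ through $r$ steps. The harder converse requires showing that partial exactness of $F_{\langle\leq d\rangle}$ forces linearity of the relevant syzygies; here the Koszulity of $S$ is indispensable, since it ensures $\Ext_S(k,k)$ is generated in degree one, so the linear part of $\partial$ detects all obstructions to linearity and allows us to propagate information across the first $r+1$ homological steps. Managing the interplay between the internal grading, the filtration by $\langle\leq d\rangle$, and the step count $r$ will be the principal source of technical care.
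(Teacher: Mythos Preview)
Your filtration-from-below scheme (induct on $d$ via $0\to F_{\langle\leq d-1\rangle}\to F_{\langle\leq d\rangle}\to F_{\langle d\rangle}\to 0$) differs from the paper's, which instead peels off the lowest strand and passes to the \emph{quotient} module $M/M_{\langle d_0\rangle}$, inducting on a truncated regularity ${\rm reg}_{\leq r}(M)$. The two organisations are dual in spirit and ultimately rest on the same ingredients, but you have not supplied them, and your assessment of where the difficulty lies is off.

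The gap is in your key lemma. You call its forward direction ``essentially bookkeeping'', but the hypothesis that each $M_{\langle d'\rangle}$ satisfies $N_{d',r+1}$ is a statement about the minimal resolutions of the \emph{submodules} $M_{\langle d'\rangle}$ and says nothing directly about $F$ or its strands; bridging the two is the entire content of the argument. Your inductive step must control $H_*(F_{\langle d\rangle})$, yet once $d$ exceeds the minimal generating degree the strand $F_{\langle d\rangle}$ is not a strand of the resolution of $M_{\langle d\rangle}$ (the relevant $\Tor$ groups differ because $M_{d-1}\neq 0$). The paper closes exactly this gap with two preliminary results: \cref{lem_lowest_strand}, which uses Koszulity of $S$ to identify the lowest strand of $F$ with that of the resolution of $M_{\langle d_0\rangle}$, and \cref{lem_linear_factor}, which transfers componentwise linearity in the first $r$ steps between $M$ and $M/M_{\langle d_0\rangle}$. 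To make your scheme work you would need both, applied to $M/M_{\langle\leq d-1\rangle}$: first to identify $F_{\langle d\rangle}$ with the lowest strand of the resolution of $M_{\langle\leq d\rangle}/M_{\langle\leq d-1\rangle}$, and then to relate linearity of that subquotient to linearity of $M_{\langle d\rangle}$. So the Koszul hypothesis already enters in the direction you deemed easy, and without these lemmas neither direction of your induction closes.
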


\begin{remark}
The last condition of the theorem says, more precisely, that the map $H_0(\lin(F))\to \gr(M)$ of (\ref{eq_H0lin}) from \cref{se_lin_background} is an isomorphism. Equivalently,  $H_0(F_{\langle d\rangle})=M_{\langle \leqslant d\rangle}/M_{\langle \leqslant d-1\rangle}$ for all $d$. In the work \cite{IR}, this condition that $H_0(\lin(F))$ is ``as expected'' is not needed, since it is implied by exactness of $\lin(F)$. As we only assume exactness of $\lin(F)$ up to a certain degree, this condition is necessary in our setting (see \cref{ex_AQK}).
\end{remark}

Before proving the theorem we need a few lemmas. We also remind the reader that the  $d$-linear strand $F_{\langle d\rangle}$ was defined for minimal complexes of free graded $S$-modules  in \cref{se_lin_background}.

\begin{lemma}\label{lem_lowest_strand}
    Let $S$ be a standard graded Koszul algebra and $M$ be a finitely generated graded $S$-module concentrated in degrees $d$ and above. If $F \to M$ and $G\to M_{\langle d\rangle}$ are minimal resolutions, then the induced map $G_{\langle d\rangle} \to  F_{\langle d\rangle}$ is an isomorphism.
\end{lemma}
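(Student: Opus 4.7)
The plan is to use Koszulity of $S$ to compare Betti numbers via the short exact sequence
\[
0 \longrightarrow M_{\langle d\rangle} \longrightarrow M \longrightarrow M/M_{\langle d\rangle} \longrightarrow 0,
\]
and then translate this back into an isomorphism of the $d$-linear strands. The key input is the standard consequence of Koszulity that a finitely generated graded $S$-module $N$ with $N_{<t}=0$ satisfies $\beta_{i,j}^S(N) = 0$ for all $j < i + t$. One proves this by induction on $i$: a minimal presentation $0 \to N' \to P_0 \to N \to 0$ has $P_0$ generated in degrees $\geqslant t$ and $N'$ generated in degrees $\geqslant t+1$, and the result then follows from the long exact sequence in $\Tor$ combined with $\Tor_i^S(P_0,k) = 0$ for $i\geqslant 1$ and the inductive hypothesis applied to $N'$.

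Since $M$ lives in degrees $\geqslant d$ (by hypothesis) and $M_{\langle d\rangle}=SM_d$ contains all of $M_d$, the quotient $M/M_{\langle d\rangle}$ is concentrated in degrees $\geqslant d+1$. By the Koszulity fact just quoted, $\Tor_i^S(M/M_{\langle d\rangle},k)_{i+d}=0$ and $\Tor_{i+1}^S(M/M_{\langle d\rangle},k)_{i+d}=0$ for every $i\geqslant 0$. The long exact sequence in $\Tor$ coming from the displayed short exact sequence then shows that the inclusion $M_{\langle d\rangle}\hookrightarrow M$ induces an isomorphism
\[
\Tor_i^S(M_{\langle d\rangle},k)_{i+d} \xrightarrow{\;\cong\;} \Tor_i^S(M,k)_{i+d}
\]
for every $i$. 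In particular, $\beta_{i,i+d}(M_{\langle d\rangle}) = \beta_{i,i+d}(M)$, so $(G_{\langle d\rangle})_i$ and $(F_{\langle d\rangle})_i$ are free $S$-modules of the same rank generated in the same internal degree $i+d$.

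To finish, I would lift the inclusion $M_{\langle d\rangle}\hookrightarrow M$ to a degree-preserving morphism $\phi\colon G\to F$ of minimal free resolutions. Because $\phi$ preserves internal degrees, it restricts to a map $G_{\langle\leqslant d\rangle}\to F_{\langle\leqslant d\rangle}$ and descends to a chain map $\phi_d\colon G_{\langle d\rangle}\to F_{\langle d\rangle}$ of the $d$-linear strands. The strands have strictly linear differentials, so tensoring with $k$ kills all differentials and identifies $H_i(G_{\langle d\rangle}\otimes_S k)$ with the degree-$(i+d)$ part of $\Tor_i^S(M_{\langle d\rangle},k)$, and likewise for $F$. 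Under these identifications, $\phi_d\otimes_S k$ is precisely the isomorphism displayed above. Nakayama's lemma then implies that $\phi_d$ is surjective, and since the source and target are free of the same finite rank in every homological degree, $\phi_d$ is an isomorphism. The only real obstacle is establishing the Koszul vanishing $\beta_{i,j}(N)=0$ for $j<i+t$, which as sketched is a quick induction; thereafter the argument is a formality.
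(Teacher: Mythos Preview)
Your argument is correct. The paper takes a slightly different, more compressed route: rather than using the long exact sequence for $0\to M_{\langle d\rangle}\to M\to M/M_{\langle d\rangle}\to 0$, it computes $\Tor^S_n(-,k)_{n+d}$ directly with the linear minimal resolution $L\to k$. Since $L_n$ is generated purely in degree $n$, the complex $(M\otimes_S L)_{n+d}$ in homological degrees $\geqslant n$ depends only on $M_d$, which coincides with $(M_{\langle d\rangle})_d$; this immediately gives $\Tor^S_n(M,k)_{n+d}=\Tor^S_n(M_{\langle d\rangle},k)_{n+d}$, and Nakayama finishes as in your argument.

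One small observation about your write-up: the vanishing $\beta_{i,j}^S(N)=0$ for $j<i+t$ whenever $N_{<t}=0$ is not really a consequence of Koszulity---your own induction uses only that $S$ is standard graded (so that syzygy modules of $N$ live in successively higher degrees). So your proof in fact establishes the lemma without the Koszul hypothesis, which is a mild bonus over the paper's argument, where Koszulity is genuinely used to get the linear resolution $L$. The trade-off is that the paper's computation is a single line once $L$ is in hand, while yours routes through the long exact sequence and an auxiliary induction.
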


\begin{proof} 
By Nakayama's lemma it suffices to check that $\Tor^S_n(M,k)_{n+d} \to \Tor^S_n(M_{\langle d\rangle},k)_{n+d}$ is an isomorphism for each $n$. As $S$ is Koszul the minimal resolution $L\to k$ is linear, and so $\Tor^S_n(M,k)_{n+d} = H_n(M\otimes_SL)_{n+d} = H_n(M_{\langle d\rangle}\otimes_SL)_{n+d} = \Tor^S_n(M_{\langle d\rangle},k)_{n+d}$.
\end{proof}

The next lemma is analogous to \cite[Lemma 5.5]{IR}:

\begin{lemma}\label{lem_linear_factor}
Let $S$ be a standard graded Koszul algebra and $M$ be a finitely generated graded $S$-module concentrated in degrees $d$ and above. The following are equivalent:
\begin{enumerate}[label={\normalfont(\alph*)}]
\item\label{linlem1} $M$ is componentwise linear in the first $r$ steps;
\item\label{linlem2} $M_{\langle d\rangle}$ is linear in the first $r$ steps and $M/M_{\langle d\rangle}$ is componentwise linear in the first $r$ steps.
\end{enumerate}
\end{lemma}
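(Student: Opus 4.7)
The plan is to exploit a natural short exact sequence relating $M_{\langle e\rangle}$, $M_{\langle d\rangle}$ and $(M/M_{\langle d\rangle})_{\langle e\rangle}$, combined with an auxiliary propagation lemma controlling the behaviour of linearity under the operation $L \mapsto \mathfrak{m} L$.

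For each $e \geq d$, consider the composite $M_{\langle e\rangle}\hookrightarrow M\twoheadrightarrow M/M_{\langle d\rangle}$; its image is $(M/M_{\langle d\rangle})_{\langle e\rangle}$ and its kernel is $M_{\langle e\rangle}\cap M_{\langle d\rangle}$. Using that $S$ is standard graded and $M_{\langle d\rangle}$ is equigenerated (so $(M_{\langle d\rangle})_n = S_{n-d}M_d = S_{n-e}\cdot S_{e-d}M_d$ for $n\geq e$), a degree-wise check identifies this intersection with $(M_{\langle d\rangle})_{\langle e\rangle}$, yielding the short exact sequence
\[
0 \to (M_{\langle d\rangle})_{\langle e\rangle} \to M_{\langle e\rangle} \to (M/M_{\langle d\rangle})_{\langle e\rangle} \to 0.
\]

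Next I would prove a propagation lemma: if $L$ is generated in a single degree $d'$ and has a resolution linear in the first $r$ steps, then $\mathfrak{m} L = L_{\langle d'+1\rangle}$ is generated in degree $d'+1$ and is also linear in the first $r$ steps. This follows from the $\Tor$ long exact sequence applied to $0\to\mathfrak{m} L\to L\to L/\mathfrak{m} L\to 0$: the cokernel is a direct sum of shifts of $k$, which is linear by Koszulness of $S$. A bidegree chase, together with $\mathfrak{m} L$ being generated in degree $d'+1$ (which rules out the only potentially problematic $\Tor$ group below the predicted linear strand), yields the claim. Iterating $(e-d)$ times shows that $(M_{\langle d\rangle})_{\langle e\rangle}$ inherits linearity in the first $r$ steps from $M_{\langle d\rangle}$.

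Both implications now follow from the long exact sequence of $\Tor^S(-,k)$ applied to the short exact sequence above. For (b)$\Rightarrow$(a), both outer terms are linear in the first $r$ steps (the kernel by the propagation lemma, the cokernel by hypothesis), so $\Tor^S_i(M_{\langle e\rangle},k)_j$ is sandwiched between two zeros for $j\neq i+e$, $i<r$, forcing $M_{\langle e\rangle}$ itself to be linear in the first $r$ steps. For (a)$\Rightarrow$(b), the first part is immediate, and the second part uses the same long exact sequence with $M_{\langle e\rangle}$ and the kernel now known to be linear, deducing the required vanishing for the quotient (bidegrees below the predicted linear strand being handled by the fact that the quotient is generated in degree $e$). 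The main obstacle is the identification of the kernel in the short exact sequence, which rests on $S$ being standard graded and $M_{\langle d\rangle}$ equigenerated; once this is in place, everything else is a routine bidegree chase.
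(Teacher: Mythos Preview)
Your proposal is correct and follows essentially the same approach as the paper. The paper likewise first proves the propagation step (that $(M_{\langle d\rangle})_{\langle e\rangle}$ is linear in the first $r$ steps via the sequence $0\to \mathfrak{m}L\to L\to L/\mathfrak{m}L\to 0$ and Koszulness of $S$), then uses the same short exact sequence $0\to (M_{\langle d\rangle})_{\langle e\rangle}\to M_{\langle e\rangle}\to (M/M_{\langle d\rangle})_{\langle e\rangle}\to 0$ (citing \cite[Lemma~5.5]{IR} for it) together with the $\Tor$ long exact sequence to deduce both implications; your explicit verification of the kernel identification is the only place you add detail beyond the paper.
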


\begin{proof}
Under either \ref{linlem1} or \ref{linlem2} $M_{\langle d\rangle}$ is linear in the first $r$ steps by assumption, and we start by showing that this implies $(M_{\langle d\rangle})_{\langle i\rangle}$ is linear in the first $r$ steps, for all $i$. If $d \leq i\leq j$ then $((M_{\langle d\rangle})_{\langle i\rangle})_{\langle j\rangle} = (M_{\langle d\rangle})_{\langle j\rangle}$, so it suffices by induction to treat the case $i=d+1$.

We consider the exact sequence of Tor groups associated to the following short exact sequence
\[
0\longrightarrow(M_{\langle d\rangle})_{\langle d+1\rangle} \longrightarrow M_{\langle d\rangle} \longrightarrow M_{\langle d\rangle}/(M_{\langle d\rangle})_{\langle d+1\rangle} \longrightarrow0.
\]
 Since $(M_{\langle d\rangle})_{\langle d+1\rangle}$ is generated in degree $d+1$, the right-hand map below must be zero for $n<r$
\[
\Tor^S_{n+1}(M_{\langle d\rangle}/(M_{\langle d\rangle})_{\langle d+1\rangle},k)\longrightarrow\Tor^S_{n}((M_{\langle d\rangle})_{\langle d+1\rangle} ,k)\xrightarrow{\ 0\ } \Tor^S_{n}(M_{\langle d\rangle} ,k).
\]
Note that $M_{\langle d\rangle}/(M_{\langle d\rangle})_{\langle d+1\rangle}$ is a direct sum of copies of $k$ shifted into degree $d$, and therefore, since $R$ is Koszul, it has a linear resolution. Hence the first map above being surjective shows that $(M_{\langle d\rangle})_{\langle d+1\rangle}$ is linear in the first $r$ steps.

We finish the proof using this exact sequence (cf.\ \cite[Lemma 5.5]{IR}):
\[
0\longrightarrow(M_{\langle d\rangle})_{\langle i\rangle}\longrightarrow M_{\langle i\rangle} \longrightarrow (M/M_{\langle d\rangle})_{\langle i\rangle}\longrightarrow0.
\]
If \ref{linlem1} holds then, as we have just seen, the first two terms of this sequence are linear in the first $r$ steps. Therefore, in the resulting exact sequence
\[
\Tor^S_{n}(M_{\langle i\rangle},k)\longrightarrow\Tor^S_{n}((M/M_{\langle d\rangle})_{\langle i\rangle} ,k)\xrightarrow{\ 0\ } \Tor^S_{n-1}((M_{\langle d\rangle})_{\langle i\rangle},k)
\]
the second map is zero for $n-1<r$. As before, surjectivity of the first map in this range makes $(M/M_{\langle d\rangle})_{\langle i\rangle}$ linear in the first $r$ steps.

By a similar token, if \ref{linlem2} holds then the outer two terms of the sequence are linear in the first $r$ steps. The exact sequence that results from applying $\Tor^S(-,k)$ shows immediately that $M_{\langle i\rangle}$ is linear in the first $r$ steps.
\end{proof}

\begin{proof}[Proof of \cref{thm_complin_steps}]

Let $d=\min\{i~:~ M_i\neq 0\}$ and write
\[
{\rm reg}_{\leq r}(M) = \max\left\{ e-d ~:~ (F_n)_{\langle e+n\rangle} \neq 0\text{ for some } 0\leq n\leq r\right\}
\]
We'll do induction on this invariant: if ${\rm reg}_{\leq r}(M) =0$ then $F$ is $d$-linear in the first $r+1$ steps and $M/M_{\langle d\rangle}=0$, so $M$ is componentwise linear in the first $r+1$ steps by \cref{lem_linear_factor}.
Likewise, in this situation $\lin(F)_{\leq r}\cong F_{\leq r}$, and therefore $H_n(\lin(F))=0$ for $0<n<r$ and $H_0(\lin(F))=M$. So the two conditions in the theorem are vacuously equivalent.

Now assume that ${\rm reg}_{\leq r}(M)$ is strictly positive. In this case we consider the ``lowest linear strand'' $F_{\langle d \rangle}$ of $F$ defined by
\[
(F_{\langle d \rangle})_n\coloneqq (F_n)_{\langle d+n\rangle}
\]
for $n\geq 0$. Note that $F_{\langle d \rangle}$ is finitely generated and free in each homological degree, and, for degree reasons, $F_{\langle d \rangle}$ is a subcomplex of $F$. Note also that $H_0(F_{\langle d \rangle})=M_{\langle d \rangle}$. By definition there is a short exact sequences of complexes of free $R$-modules
\begin{equation}\label{eq_lin_ses}
0\to F_{\langle d \rangle} \to  F\to F/ F_{\langle d \rangle}\to 0
\end{equation}
that canonically splits in each homological degree. This yields a canonical decomposition of complexes
\begin{equation}\label{eq_lin_decomp}
    \lin(F) = \lin(F/ F_{\langle d \rangle}) \oplus \lin(F_{\langle d \rangle}) = \lin(F/ F_{\langle d \rangle}) \oplus F_{\langle d \rangle}
\end{equation}
Assume that $M$ is componentwise linear in the first $r+1$ steps. In particular, $M_{\langle d\rangle}$ has a resolution that is  $d$-linear in the first $r+1$ steps. By \cref{lem_lowest_strand} this implies $H_n(F_{\langle d \rangle})=0$ for $0<n<r$.

The exact sequence in homology associated to \eqref{eq_lin_ses} shows that $H_n(F/F_{\langle d\rangle})=0$ for $0<n\leq r$, and $H_0(F/F_{\langle d\rangle})=M/M_{\langle d\rangle}$.
Therefore, if $G\to M/M_{\langle d\rangle}$ is a minimal free resolution, then $G_{\leq r}\cong (F/F_{\langle d\rangle})_{\leq r}$. From this we see that ${\rm reg}_{\leq r}(M/M_{\langle d\rangle})<{\rm reg}_{\leq r}(M)$, which will allow us to use induction. We also see that $\lin(G)_{\leq r}\cong 
\lin(F/F_{\langle d\rangle})_{\leq r}$.  Combining this with our induction hypothesis yields $H_i(\lin(F/F_{\langle d\rangle}))\cong H_i(\lin(G)) =0$ for $0<i<r$ and $H_0(\lin(F/F_{\langle d\rangle}))=\gr(M/M_{\langle d\rangle})$. Finally, using \eqref{eq_lin_decomp}, it follows that $H_n(\lin(F))=0$ for $0<n<r$  and $H_0(\lin(F))= \gr(M/M_{\langle d\rangle})\oplus \gr(M_{\langle d\rangle})=\gr(M)$, establishing the forward direction of the theorem.

Assume conversely that $H_n(\lin(F))=0$ for $0<n<r$  and (provided $r\neq 0$) $H_0(\lin(F))=\gr(M)$.  From \eqref{eq_lin_decomp} it follows that $H_n(F_{\langle d\rangle})=0$ for $0<n<r$. Since $H_0(F_{\langle d\rangle})=M_{\langle d\rangle}$, this means $M_{\langle d\rangle}$ is $d$-linear in the first $r+1$ steps.

Just as before, we use the homology exact sequence associated to \eqref{eq_lin_ses} to see
that $H_n(F/F_{\langle d\rangle})=0$ for $0<n\leq r$ and $H_0(F/F_{\langle d\rangle})=M/M_{\langle d\rangle}$, so we deduce that ${\rm reg}_{\leq r}(M/M_{\langle d\rangle})<{\rm reg}_{\leq r}(M)$. From \eqref{eq_lin_decomp} we have $H_n(\lin(F/F_{\langle d\rangle}))=0$ for $0<n<r$ and $H_0(\lin(F/F_{\langle d\rangle}))=M/M_{\langle d\rangle}$. We also note that for $i>d$ we have $H_0(F_{\langle i\rangle}) = M_{\langle  \leqslant i\rangle}/M_{\langle  \leqslant i-1\rangle}= (M/M_{\langle d\rangle})_{\langle  \leqslant i\rangle}/(M/M_{\langle d\rangle})_{\langle  \leqslant i-1\rangle}$, which means $H_0(\lin(F/F_{\langle d\rangle}))=\gr(M/M_{\langle d\rangle})$. The three previous sentences taken together mean we can apply our induction hypothesis  to see that $M/M_{\langle d\rangle}$ is componentwise linear in the first $r+1$ steps. 
 
Combining the previous two paragraphs,  $M$ is componentwise linear in the first $r+1$ steps by \cref{lem_linear_factor}. This completes that backward direction of the theorem, and closes the induction.
\end{proof}

The next result is an algebraic generalisation of \cref{lem_Kos_qKos}. It is similar to \cite[Proposition~3.4]{CS}.

\begin{proposition}\label{prop_qlinstrand}
    Let $S$ be a standard graded ring, and let $M$ be a finitely generated graded $S$-module. If $M$ is componentwise linear in the first $p+1$ steps, then the quasi-linear strand of $M$ contains $\Ext_S^{\leqslant p}(M,k)$.
\end{proposition}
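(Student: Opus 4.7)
The plan is to prove by induction on $n \leq p$ that $\Ext_S^n(M,k)$ lies in the quasi-linear strand of $M$; the case $n = 0$ is the definition. For the inductive step, assuming the containment for $n - 1$, it suffices to show that the Yoneda product
\[
\mu_n \colon \Ext_S^1(k,k) \otimes \Ext_S^{n-1}(M,k) \longrightarrow \Ext_S^n(M,k)
\]
is surjective for every $1 \leq n \leq p$. Fix a minimal graded free resolution $F \to M$ and set $V_i = \Tor^S_i(M,k)$. A direct Yoneda computation, essentially the one implicit in the proof of \cref{lem_Kos_qKos} and in \cref{rem_linear_part_action}, identifies $\mu_n$ with the $k$-linear dual, in each internal degree, of the map $\phi_n \colon V_n \to \mathfrak{m}/\mathfrak{m}^2 \otimes_k V_{n-1}$ induced by the linear part of the differential $F_n \to F_{n-1}$. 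Hence surjectivity of $\mu_n$ amounts to injectivity of $\phi_n$ in each internal degree.

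To establish this injectivity, fix an internal degree $s$ and set $d = s - n$. By \cref{lem_lowest_strand} we may identify $V_{n,s}$ with $\Tor^S_n(M_{\langle d \rangle},k)_s$, and the restriction $\phi_n|_{V_{n,s}}$ with the corresponding map $\phi_n^{(d)}$ arising from a minimal resolution $G \to M_{\langle d \rangle}$. Since $M$ is componentwise linear in the first $p+1$ steps, $M_{\langle d \rangle}$ satisfies condition $N_{d, p+1}$, and so $G_i$ is generated in internal degree exactly $d + i$ for each $0 \leq i \leq p$; moreover, a standard minimality argument forces the generators of $G_{p+1}$ to live in degrees $\geq d + p + 1$. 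Consequently, for every $1 \leq n \leq p$ the module $G_{n+1}$ has no generators in degree $\leq d + n$, so the image of $G_{n+1} \to G_n$ has vanishing component in internal degree $d + n$. Exactness of $G$ at position $n$ then forces the kernel of $G_n \to G_{n-1}$ in internal degree $d + n$ — which is precisely $\ker(\phi_n^{(d)})$ — to vanish, completing the induction.

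The main delicate point is the identification of $\mu_n$ with the dual of $\phi_n$, which is classical but requires care with the graded conventions. The rest is a clean reduction to the modules $M_{\langle d \rangle}$ via \cref{lem_lowest_strand}, together with the standard degree estimate for the minimal resolution of a module generated in a single degree.
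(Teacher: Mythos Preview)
Your reduction to the equigenerated case has a genuine gap. You invoke \cref{lem_lowest_strand} to identify $V_{n,s}=\Tor^S_n(M,k)_s$ with $\Tor^S_n(M_{\langle d\rangle},k)_s$ for $d=s-n$, but that lemma requires $M$ to be concentrated in degrees $\geqslant d$, which fails as soon as $M$ has a minimal generator of degree below $d$. In fact the identification is false even for componentwise linear modules: take $S=k[x,y]$ and $M=(x,y^2)$. Then $M$ is (fully) componentwise linear, yet $\Tor^S_1(M,k)_3$ is one-dimensional while $\Tor^S_1(M_{\langle 2\rangle},k)_3=\Tor^S_1(\mathfrak{m}^2,k)_3$ is two-dimensional. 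So you cannot transport $\phi_n$ to $\phi_n^{(d)}$ in the way you describe, and injectivity of $\phi_n^{(d)}$ does not imply injectivity of $\phi_n$ via the surjection $\Tor^S_n(M_{\langle d\rangle},k)_s\twoheadrightarrow\Tor^S_n(M,k)_s$ alone. There is also a secondary issue: \cref{lem_lowest_strand} assumes $S$ is Koszul, while the proposition is stated for an arbitrary standard graded ring.

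The paper's argument avoids this by inducting instead on the number of degrees in which $M$ is minimally generated. The base case, with $M$ generated in a single degree, is essentially your $\phi_n^{(d)}$ argument and is correct. For the inductive step, with $d_0$ the \emph{lowest} generator degree, one uses the short exact sequence $0\to M_{\langle d_0\rangle}\to M\to M/M_{\langle d_0\rangle}\to 0$ and the resulting exact sequence on $\Ext$: both outer modules are componentwise linear in the first $p+1$ steps (by \cref{lem_linear_factor}) and are generated in fewer degrees, so induction applies to them, and the middle term follows. The moral is that one must peel off linear strands from the bottom up, rather than jump directly to an arbitrary $M_{\langle d\rangle}$.
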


\begin{proof}
We first assume that $M$ is generated in a single degree $d$. Let $F$ be the minimal fee resolution of $M$, so that $\Ext^i_S(M,k)={\rm Hom}_S(F_i,k)$. Assume that $F_i$ is generated in degree $d+i$ for $i< p$. We will show that this implies $\Ext_S^{i+1}(M,k)=\Ext_S^1(k,k)\cdot \Ext_S^i(M,k)$.

Since $F$ is a minimal resolution, for each minimal generator $f\in F_{i+1}$ we have $\partial(f)\neq 0$. For degree reasons, we may write $\partial(f)=\sum x_j g_j$ with $x_j\in S_1$ and $\{g_j\}$ forming a basis of $F_{i}$. Pick $j$ so that $x_j\neq 0$, and let $\theta: F_i\to k(-i-d)$ be dual to $g_j$ (so $\theta(g_j)=1$ and $\theta(g_k)=0$ when $j\neq k$). Also let $\sigma\in \Ext_S^1(k,k)\cong (S_1)^*$ be represented by a map $\sigma \colon S_1\to k$ be such that $\sigma( x_j)=1$. Calculating the Yoneda action shows that the element $\sigma\cdot\theta\in \Ext_S^{i+1}(M,k)={\rm Hom}_S(F_{i+1},k)$ satisfies  $(\sigma\cdot\theta)(f)=1$. Since $f$ was arbitrary, this shows that the elements of $\Ext_S^1(k,k)\cdot \Ext_S^i(M,k)$ do not simultaneously vanish on any minimal generator of $F_{i+1}$, and therefore they span $\Ext_S^{i+1}(M,k)={\rm Hom}_S(F_{i+1},k)$.

In case $M$ is not generated in a single degree, we proceed by induction on the number of degrees in which $M$ is generated, the base having just been established. If $d$ is the lowest degree of a minimal generator of $M$, then the exact sequence
\[
0\to M_{\langle d\rangle}\to M \to M/ M_{\langle d\rangle}\to 0
\]
Induces an exact sequence
\[
\Ext_S^i(M_{\langle d\rangle},k)\leftarrow \Ext_S^i(M,k) \leftarrow \Ext_S^i(M/ M_{\langle d\rangle},k).
\]
Note that when $i=0$ the left map is surjective. 
It follows that if, for $i<p$, the outer two terms can be generated from classes of degree zero under the action of $\Ext^1_S(k,k)$, then so can the term in them middle. Since both $M_{\langle d\rangle}$ and $M/M_{\langle d\rangle}$ are generated in fewer degrees than $M$, the induction hypothesis finishes the proof.
\end{proof}

We draw attention to an important special case of componentwise linearity:

\begin{definition}
Let $S$ be a standard graded Koszul $k$-algebra, and let $M$ be an $S$-module of finite projective dimension. We say that $M$ is \emph{componentwise almost linear} if it is componentwise linear in the first $r=\pd_S(M)$ steps.

If $R$ is a standard graded ring, then there is an essentially unique way to write $R=S/I$ as a quotient of a standard graded polynomial ring $S$ having $\dim(R_1)=\dim(S_1)$. We will say that $R$ is \emph{componentwise almost linear} (CAL) if the ideal $I$ is componentwise almost linear as an $S$-module.
\end{definition}

The next result is a direct consequence of \cref{thm_complin_steps} and \cref{prop_qlinstrand}. We remind the reader that almost quasi-Koszul modules were defined in \cref{def_AQK}; in this context, it means that $\Ext_S(R,k)$ is generated in degrees $0$ and $1$ as a module over the exterior algebra $\Lambda=\Ext_S(k,k)$, except for possibly the top degree.

\begin{corollary} \label{cor_cal_aqk}
        Let $R$ be a standard graded ring, and write $R=S/I$, where $S$ is a standard graded polynomial ring having the same embedding dimension of $R$, and assume $p=\pd_S(R)>2$. Then $R$ is componentwise almost linear if and only if the minimal $S$-free resolution $F\to R$ satisfies $H_1(\lin(F))= \gr(I)$ and  $H_i(\lin(F))=0$ for $1<i<p-1$. Moreover, these conditions imply that $I$ is an almost quasi-Koszul $S$-module.
\end{corollary}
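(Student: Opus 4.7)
The argument is a direct application of \cref{thm_complin_steps} and \cref{prop_qlinstrand} to the module $M = I$, once the bookkeeping between the minimal resolutions of $R$ and of $I$ is in place.

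First I would set up the translation. Since the embedding dimension of $R$ equals $\dim_k S_1$, the ideal $I$ contains no linear forms and the augmentation $F_0 \twoheadrightarrow R$ may be taken to be $S \twoheadrightarrow R$. From the short exact sequence $0 \to I \to S \to R \to 0$ we obtain $\pd_S(I) = p - 1 \geqslant 2$, and the truncation $F^I_i \coloneqq F_{i+1}$ with its induced differentials is a minimal free resolution of $I$. Because the linear part is computed termwise, $\lin(F^I)_i = \lin(F)_{i+1}$ and $H_i(\lin(F^I)) = H_{i+1}(\lin(F))$ for $i \geqslant 0$; under this identification the canonical augmentation $H_0(\lin(F^I)) \to \gr(I)$ corresponds to the map $H_1(\lin(F)) \to \gr(I)$ appearing in the statement.

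By definition, $R$ is CAL precisely when $I$ is componentwise linear in the first $\pd_S(I) = p - 1$ steps. I would then apply \cref{thm_complin_steps} to $I$ with $r + 1 = p - 1$, i.e.\ $r = p - 2$, which is strictly positive since $p > 2$. The theorem characterises this property as the pair of conditions $H_n(\lin(F^I)) = 0$ for $0 < n < p - 2$ and $H_0(\lin(F^I)) = \gr(I)$. Re-indexing through the identification above turns these into the claimed conditions $H_m(\lin(F)) = 0$ for $1 < m < p - 1$ and $H_1(\lin(F)) = \gr(I)$.

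For the ``moreover'' clause, \cref{def_AQK} requires the quasi-linear strand of $I$ to contain $\Ext_S^{<\pd_S(I)}(I, k) = \Ext_S^{\leqslant p - 2}(I, k)$. Granted that $I$ is componentwise linear in the first $p - 1$ steps, \cref{prop_qlinstrand} applied with that value delivers precisely this containment. I do not anticipate any real difficulty here: the whole argument is a routine application of the two cited results, and the only point that deserves care is the single-index shift between the resolutions of $R$ and $I$ (in particular the compatibility of the augmentations to $\gr(I)$), which is what forces the ``$H_1$'' rather than ``$H_0$'' in the stated conditions.
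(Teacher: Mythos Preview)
Your proposal is correct and matches the paper's approach exactly: the paper simply states that the corollary is a direct consequence of \cref{thm_complin_steps} and \cref{prop_qlinstrand}, and you have carefully spelled out the index shift between the resolutions of $R$ and $I$ (in particular, that $\partial^{\lin}\colon F_1\to F_0$ vanishes because $I$ contains no linear forms, so $H_1(\lin(F))=H_0(\lin(F^I))$) and verified the parameter bookkeeping in both citations. There is nothing to add.
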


\section{Examples of almost quasi-Koszul \texorpdfstring{$I_K$}{IK}} \label{sec_examples}

The results above show that there are strong algebraic and topological consequences of almost linearity and its generalisations, especially in the Gorenstein setting. Our aim in this section is to show that almost quasi-Koszulity is extremely common, by producing many examples of Gorenstein complexes that have almost linear, componentwise almost linear, or almost quasi-Koszul Stanley--Reisner ideals.

\subsection*{Connected sums of simplicial complexes} We start by showing that almost quasi-Koszulity is closed under taking connected sums.

Let $L$ and $L'$ be two $(n-1)$-dimensional simplicial complexes, and let $\sigma$ be a common facet (i.e., an $(n-1)$-dimensional face of both $L$ and $L'$, with a chosen bijection between their vertices). The \emph{connected sum} $L\#_\sigma L'$ of $L$ and $L'$ along $\sigma$ is the simplicial complex obtained from the disjoint union of $L$ and $L'$ by deleting $\sigma$ from both and identifying the common vertices of $\sigma$.

\begin{remark}
    There is a direct algebraic analogue of the connected sum construction. For this we fix a pair of surjective homomorphisms $R\to T\leftarrow R'$ of commutative rings, a $T$-module $V$, and homomorphisms $R\xleftarrow{\iota_R} V\xrightarrow{\iota_{R'}} R'$ of $R$- and $R'$-modules respectively.  From this data the authors of \cite{AAM} define the \emph{connected sum} of $R$ and $R'$ over $T$ to be
    \[
    R\#_TR'=\left(R\times_TR'\right)/\big\{ (\iota_{R}(v),\iota_{R'}(v))\mid v\in V\big\}.
    \]
    Given $L$, $L'$ and $\sigma$ as above, this construction applied to Stanley--Reisner rings recovers $L\#_\sigma L$. We take $R=k[L]$ and $R'=k[L']$, and labeling the vertices of $\sigma$ by $\{1,\ldots,n\}$, we take $T=k[v_1,\ldots,v_n]$. The homomorphisms $R\to T\leftarrow R'$ are the identity on the variables $v_1,\ldots,v_n$, and zero on all other variables. We also take $V=T$ and define $\iota_R(1)=v_1\cdots v_n\in R$ and $\iota_{R'}(1)=v_1\cdots v_n\in R'$. With these in place, one may check directly that
    \[
    k[L]\#_Tk[L']\cong k[L\#_\sigma L'].
    \]
\end{remark}

To simplify terminology, below we will call a simplicial complex $K$ \emph{almost quasi-Koszul} over $k$ if $I_K$ is an almost quasi-Koszul $S$-module, where $S=k[v_1,\ldots,v_m]$ and $k$ is a field or the ring of integers $\mathbb{Z}$. (In light of \cref{prop_almost_HMF}, one could also call these \emph{almost HMF complexes}.) Denote the set of minimal non-faces of $K$ by $\mathrm{MF}(K)$.

\begin{theorem} \label{thm_closed_under_sums}
For any $n>1$, the collection of $(n-1)$-dimensional Gorenstein$^*$ complexes which are almost quasi-Koszul over $k$ is closed under connected sums. 
\end{theorem}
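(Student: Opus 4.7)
The Gorenstein$^*$ property of $K=L\#_\sigma L'$ is classical, as $|K|$ is the connected sum of the homology $(n-1)$-spheres $|L|$ and $|L'|$, hence again a homology $(n-1)$-sphere. The remaining task is to show that $I_K$ is an almost quasi-Koszul $S$-module; I plan to do this by verifying the combinatorial criterion \cref{prop_almost_HMF}\ref{aqk3} directly: for every proper subset $U$ of the vertex set $V=V_L\cup V_{L'}$ of $K$, the reduced homology $\widetilde{H}_*(K_U;k)$ must be generated by the images of the inclusions $K_I\hookrightarrow K_U$ over all minimal non-faces $I\subseteq U$ of $K$.

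The first step is to enumerate the minimal non-faces of $K$. They fall into three classes: (i) the minimal non-faces of $L$ or of $L'$ that do not properly contain $\sigma$ (for these $K_I=L_I$ or $L'_I$ tautologically); (ii) the facet $\sigma$ itself, now excluded from $K$; and (iii) cross-pairs $\{u,v\}$ with $u\in V_L\smallsetminus V_{L'}$ and $v\in V_{L'}\smallsetminus V_L$. Writing $U_L=U\cap V_L$ and $U_{L'}=U\cap V_{L'}$, I would split the verification by whether or not $\sigma\subseteq U$. When $\sigma\not\subseteq U$, we have $K_U=L_{U_L}\cup L'_{U_{L'}}$ meeting along the simplex on $U\cap\sigma$; Mayer--Vietoris along this contractible intersection (or a disjoint-union computation when $U\cap\sigma=\varnothing$) gives $\widetilde{H}_*(K_U)\cong\widetilde{H}_*(L_{U_L})\oplus\widetilde{H}_*(L'_{U_{L'}})$, together with at most one extra class in $\widetilde{H}_0$ accounted for by a type-(iii) cross-pair. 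Since $\sigma\not\subseteq U$ forces $U_L\subsetneq V_L$ and $U_{L'}\subsetneq V_{L'}$, the almost quasi-Koszul hypotheses on $L$ and $L'$ (applied through \cref{prop_almost_HMF}\ref{aqk3}) provide the required HMF-generators on each side, all coming from type-(i) minimal non-faces of $K$.

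The main case is $\sigma\subseteq U\subsetneq V$, where $K_U=(L_{U_L}\smallsetminus\{\sigma\})\cup(L'_{U_{L'}}\smallsetminus\{\sigma\})$ meet along $\partial\sigma\simeq S^{n-2}$. A key algebraic input is the vanishing $\widetilde{H}_{n-1}(L_{U_L})=0$ for every proper full subcomplex $L_{U_L}\subsetneq L$, which holds because the fundamental class of the homology sphere $L$ has a nonzero coefficient on every facet and therefore cannot be supported on $L_{U_L}$. The short exact sequence
\[
0\to C_*(L_{U_L}\smallsetminus\{\sigma\})\to C_*(L_{U_L})\to k_\sigma\to 0
\]
then yields $\widetilde{H}_{n-2}(L_{U_L}\smallsetminus\{\sigma\})\cong\widetilde{H}_{n-2}(L_{U_L})\oplus k\cdot[\partial\sigma]$, while in the edge case $U_L=V_L$ the punctured sphere $L\smallsetminus\{\sigma\}$ is acyclic. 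Feeding these into Mayer--Vietoris gives $\widetilde{H}_{n-1}(K_U)=0$, direct sum splittings in all degrees $\neq n-2$, and when both $U_L,U_{L'}$ are proper
\[
\widetilde{H}_{n-2}(K_U)\cong\widetilde{H}_{n-2}(L_{U_L})\oplus\widetilde{H}_{n-2}(L'_{U_{L'}})\oplus k\cdot[\partial\sigma];
\]
when instead $U_L=V_L$ the sequence collapses to $\widetilde{H}_*(K_U)\cong\widetilde{H}_*(L'_{U_{L'}})$. The extra class $[\partial\sigma]$ is visibly the image of the inclusion $K_\sigma=\partial\sigma\hookrightarrow K_U$ associated to the type-(ii) minimal non-face $\sigma$, while the other classes are generated by type-(i) minimal non-faces through the hypotheses on $L$ and $L'$.

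A final clean-up step is needed to ensure that no unwanted generators sneak in from ``bad'' minimal non-faces of $L$ or $L'$ that properly contain $\sigma$ (and hence fail to be minimal non-faces of $K$): any such $I$ has $|I|\geqslant n+1$, so its HMF contribution lies in $\widetilde{H}_{\geqslant n-1}$ of the relevant proper full subcomplex, which vanishes by the observation above. The proof then concludes via \cref{prop_almost_HMF}. The principal technical obstacle is the bookkeeping in the case $\sigma\subseteq U$: carefully running Mayer--Vietoris to identify the extra class $[\partial\sigma]$, verifying that it is precisely the HMF-generator associated to the new minimal non-face $\sigma$ of $K$, and checking that the acyclicity of $L\smallsetminus\{\sigma\}$ exactly kills the would-be contribution from the $L$-side in the remaining subcase.
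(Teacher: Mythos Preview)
Your approach is correct and follows essentially the same strategy as the paper: decompose $K_U$ along the $L$-side and $L'$-side via Mayer--Vietoris, and use the Gorenstein$^*$ hypothesis to kill the potentially problematic top-degree homology $\widetilde{H}_{n-1}$ of proper full subcomplexes. The paper's argument differs mainly in packaging. Rather than verifying the homological criterion \cref{prop_almost_HMF}\ref{aqk3} directly, the paper works dually in cohomology and checks the equivalent ``one step down'' condition: for every $I\notin\mathrm{MF}(K)$, each nonzero $\alpha\in\widetilde{H}^*(K_I)$ restricts nontrivially to some $\widetilde{H}^*(K_{I\smallsetminus i})$. This formulation lets the paper avoid your explicit enumeration of the minimal non-faces of $K$; in particular, the ``bad'' minimal non-faces $I\supsetneq\sigma$ of $L$ are dispatched in one line by observing that $K_I=L_I\smallsetminus\sigma=\partial\Delta^n\smallsetminus\sigma$ is contractible, rather than via your degree-counting clean-up. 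The paper also handles the case $\sigma\subseteq I$ using the single cofibration $S^{n-2}\to|K_I|\to|L_I|$ instead of your two-step computation (first the short exact sequence for $L_{U_L}\smallsetminus\{\sigma\}$, then Mayer--Vietoris). Your route is slightly more explicit about the homology of $K_U$ and about which minimal non-face supplies each generator, at the cost of more bookkeeping; the paper's is more streamlined but less constructive.
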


\begin{proof}
Let $\sigma$ be a common facet of Gorenstein$^*$ complexes $L$ and $L'$ with $\dim\sigma=n-1$. Assume that $L$ and $L'$ are almost quasi-Koszul over $k$ and let $K=L\#_\sigma L'$. Then by~\cite[Corollary~3.10]{MM}, $K$ is a Gorenstein$^*$ complex of dimension $n-1$. By relabeling vertices, we may assume $K$ has vertex set~$[m]$. Choose subsets $U,V\subseteq [m]$ so that
\[ U\cup V=[m], \quad U\cap V=\sigma, \quad K_U=L\smallsetminus\sigma, \quad K_V=L'\smallsetminus\sigma. \]
Note that $K_U\cap K_V=K_{U\cap V}=\partial\Delta^{n-1}$. 

Our goal is to show for each nonzero $\alpha \in H^*(\mathcal{Z}_K;k)\cong \Tor^S(k[K],k)$ of homological degree $1<i<m-n$ that $\iota_i\alpha \neq 0$ for some $i\in[m]$. Equivalently, by \cref{prop_homology_Z_K}, it suffices to show for each nonempty subset $I\subsetneq [m]$ with $I\notin \mathrm{MF}(K)$ that every nonzero $\alpha \in \widetilde{H}^*(K_I;k)$ maps nontrivially under the map $\widetilde{H}^*(K_I;k) \to \widetilde{H}^*(K_{I\smallsetminus i};k)$ induced by the inclusion $K_{I\smallsetminus i} \hookrightarrow K_I$ for some $i\in I$. 

Let $\alpha \in \widetilde{H}^*(K_I)$ be such a class. (All cohomology groups are taken with coefficients in $k$, which we henceforth suppress from the notation.) Consider the Mayer--Vietoris sequence
\[ 
\cdots\longrightarrow \widetilde{H}^{*-1}(K_{I\cap U\cap V}) \stackrel{\delta}{\longrightarrow} \widetilde{H}^*(K_I) \longrightarrow \widetilde{H}^*(K_{I\cap U})\oplus\widetilde{H}^*(K_{I\cap V}) \longrightarrow\cdots
\]
associated to the cover $\{K_{I\cap U}, K_{I\cap V}\}$ of the full subcomplex $K_I$. Since $K_{I\cap U\cap V}$ is a full subcomplex of $K_{U\cap V}=\partial\Delta^{n-1}$, it is contractible unless $U\cap V\subseteq I$, in which case $|K_{I\cap U\cap V}|=|K_{U\cap V}|=S^{n-2}$. So if $\alpha$ is in the image of the connecting map $\delta$, then $\alpha\in \widetilde{H}^{n-1}(K_I) \cong \Tor^S_{|I|-n}(k[K],k)_I$ lies in the $n$-linear strand $\bigoplus_i\Tor^S_{i-n}(k[K],k)_i$. But, by Poincar\'e duality, the only nonzero summand in this strand is the socle $\Tor^S_{m-n}(k[K],k)_{[m]}$. It follows that $I=[m]$ and $\alpha$ is of homological degree~$m-n$, contradicting our assumption. Therefore~$\alpha$ is not in the image of $\delta$ and, by exactness, maps nontrivially under the map induced by $K_{I\cap U} \hookrightarrow K_I$ or $K_{I\cap V} \hookrightarrow K_I$. Without loss of generality, assume the former. If $I\not\subseteq U$, then the inclusion $K_{I\cap U} \hookrightarrow K_I$ factors through $K_{I\smallsetminus i} \hookrightarrow K_I$ for some $i\in I\smallsetminus U$, and $\alpha$ maps nontrivially under the map induced by $K_{I\smallsetminus i} \hookrightarrow K_I$, as desired. Next, assuming $I\subseteq U$, we consider two cases.

\emph{Case 1}: $\sigma \not\subseteq I$. In this case, we have $K_I=L_I$. Therefore $\alpha$ maps nontrivially under the map induced by $K_{I\smallsetminus i} \hookrightarrow K_I$ for some $i\in I$, since $L$ is almost quasi-Koszul.

\emph{Case 2}: $\sigma \subseteq I$. Observe that, in this case, $K_I$ is not a full subcomplex of $L$ since $\sigma$ is a missing face of $K_I$ and a facet of $L$. However, since $L_I$ is obtained from $K_I$ by adding the missing face $\sigma$, the inclusion $\partial\Delta^{n-1}=K_\sigma \hookrightarrow K_I$ gives a cofibration
\[ S^{n-2} = |\partial\Delta^{n-1}| \longrightarrow |K_I| \longrightarrow |L_I|. \]
If $\alpha$ maps nontrivially under the map induced by $K_\sigma \hookrightarrow K_I$, then either $I=\sigma$, contradicting the assumption that $I\notin \mathrm{MF}(K)$, or else $\sigma$ is a proper subset of $I$ and $\alpha$ maps nontrivially under the map induced by $K_{I\smallsetminus i} \hookrightarrow K_I$ for some $i\in I\smallsetminus \sigma$, as desired. On the other hand, if $\alpha$ is in the kernel of the map induced by $K_\sigma \hookrightarrow K_I$, then $\alpha$ is the image of some $\beta\in \widetilde{H}^*(L_I)$ in the exact sequence induced by the cofibration above. Note that if $I\in \mathrm{MF}(L)$, then $\sigma \subsetneq I$ and consequently $L_I=\partial\Delta^n$ (since $|I|>n$ and $\dim L=n-1$). But this implies that $K_I=L_I\smallsetminus\sigma$ is contractible, contradicting the assumption that $\alpha\neq 0$. Therefore $I\notin \mathrm{MF}(L)$, and since $L$ is almost quasi-Koszul, it follows that $\beta$ maps to some nonzero $\gamma\in \widetilde{H}^*(L_{I\smallsetminus i})$ under the map $\widetilde{H}^*(L_I) \to \widetilde{H}^*(L_{I\smallsetminus i})$ induced by $L_{I\smallsetminus i} \hookrightarrow L_I$ for some $i\in I$. Finally, $\gamma$ maps nontrivially under the map induced by $K_{I\smallsetminus i} \hookrightarrow L_{I\smallsetminus i}$ as this induces an injection in $\widetilde{H}^{<n-1}(-)$. To see this, note that $K_{I\smallsetminus i}=L_{I\smallsetminus i}$ if $i\in \sigma$, and otherwise $L_{I\smallsetminus i}$ is obtained from $K_{I\smallsetminus i}$ by adding the missing face $\sigma$, leading to a cofibration as above. By commutativity of the square
\[
\begin{tikzcd}
K_I \ar[r,hook] & L_I \\
K_{I\smallsetminus i} \ar[u,hook] \ar[r,hook] & L_{I\smallsetminus i} \ar[u,hook]
\end{tikzcd}
\]
we conclude that $\alpha\in \widetilde{H}^*(K_I)$ has nontrivial image under the map induced by the left vertical inclusion, which finishes the proof.
\end{proof}

Since sphere triangulations are Gorenstein$^*$ and closed under connected sums, the next statement is an immediate consequence of \cref{thm_closed_under_sums}.

\begin{corollary}\label{cor_sphere_sums}
The collection of simplicial $(n-1)$-spheres which are almost quasi-Koszul over $k$ is closed under connected sums. \qed
\end{corollary}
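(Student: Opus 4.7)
The plan is to deduce this corollary directly from \cref{thm_closed_under_sums}, which already handles the much harder question of how almost quasi-Koszulity behaves under connected sums at the level of Gorenstein$^*$ complexes. All that remains is to verify that the class of simplicial $(n-1)$-spheres is itself stable under the connected sum operation and sits inside the class of $(n-1)$-dimensional Gorenstein$^*$ complexes over $k$.

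For the first inclusion, I would invoke the standard fact (essentially Stanley's theorem cited in the introduction, \cite[Theorem~II.5.1]{Stan}) that any triangulation of $S^{n-1}$ is Gorenstein$^*$ over any coefficient ring, being a pure $(n-1)$-dimensional complex whose links are themselves spheres with the right reduced homology. For the second, I would observe that if $L$ and $L'$ are triangulations of $S^{n-1}$ with common facet $\sigma$, then topologically $L\#_\sigma L'$ is obtained from $|L|\smallsetminus\mathrm{int}|\sigma|$ and $|L'|\smallsetminus\mathrm{int}|\sigma|$ (each a closed $(n-1)$-disk by the annulus theorem applied to simplicial spheres, or directly by noting the complement of an open simplex in a PL sphere is a PL disk) glued along $\partial|\sigma|=S^{n-2}$, which is again $S^{n-1}$. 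Combinatorially the resulting complex is still pure of dimension $n-1$ with the same link structure at every vertex not in $\sigma$, and at vertices of $\sigma$ the link is the connected sum of the respective links, so the PL-sphere condition is preserved.

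Combining these two observations, if $L,L'$ are simplicial $(n-1)$-spheres that are almost quasi-Koszul over $k$, then $L$ and $L'$ are $(n-1)$-dimensional Gorenstein$^*$ complexes, so by \cref{thm_closed_under_sums} the connected sum $K=L\#_\sigma L'$ is almost quasi-Koszul over $k$; and by the preceding paragraph $K$ is again a simplicial $(n-1)$-sphere, giving the desired closure.

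The only real subtlety will be the combinatorial sphere verification, but since this is classical (and since \cref{thm_closed_under_sums} already appeals to \cite[Corollary~3.10]{MM} for the analogous Gorenstein$^*$ statement), the proof can be kept to essentially one short paragraph citing these two ingredients.
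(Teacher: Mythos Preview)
Your proposal is correct and follows essentially the same approach as the paper: the paper simply notes in one sentence before the corollary that ``sphere triangulations are Gorenstein$^*$ and closed under connected sums,'' making the statement an immediate consequence of \cref{thm_closed_under_sums}. You have spelled out the justifications for these two facts in more detail than the paper does, but the logical route is identical.
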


We draw attention to another special case of \cref{thm_closed_under_sums}, this time by observing that stellar subdivision of an $(n-1)$-dimensional face $\sigma$ of $K$ coincides with taking the connected sum $K\#_\sigma \partial\Delta^n$ with the boundary of an $n$-simplex.

\begin{corollary}\label{cor_stellar}
The collection of Gorenstein$^*$ complexes which are almost quasi-Koszul over $k$ is closed under stellar subdivision of facets, and in particular includes the boundary complexes of all stacked polytopes.
\end{corollary}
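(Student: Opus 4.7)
The plan is to reduce this corollary directly to \cref{thm_closed_under_sums} using the identification, stated in the paragraph just before the corollary, of stellar subdivision of an $(n-1)$-dimensional face $\sigma$ of an $(n-1)$-dimensional complex $K$ with the connected sum $K\#_\sigma\partial\Delta^n$. The only nontrivial input beyond \cref{thm_closed_under_sums} is that the boundary of a simplex is itself in our class, which I would verify as a base case.

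First, I would check that $\partial\Delta^n$ is a Gorenstein$^*$ complex which is almost quasi-Koszul over $k$. Being a sphere triangulation, $\partial\Delta^n$ is Gorenstein$^*$. Its unique minimal non-face is $\{1,\ldots,n+1\}$, so its Stanley--Reisner ideal is principal: $I_{\partial\Delta^n}=(v_1\cdots v_{n+1})\subset S$. In particular $I_{\partial\Delta^n}$ is a free $S$-module of projective dimension $p=0$, and the defining condition of almost quasi-Koszulity (\cref{def_AQK}) requires only that the quasi-linear strand contain $\Ext_S^{<0}(I_{\partial\Delta^n},k)=0$, which is vacuous.

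With this base case in hand, closure under stellar subdivision of facets is immediate. Given a Gorenstein$^*$ complex $K$ of dimension $n-1$ which is almost quasi-Koszul over $k$, and an $(n-1)$-dimensional facet $\sigma$ of $K$, the stellar subdivision of $K$ at $\sigma$ is $K\#_\sigma\partial\Delta^n$. Since both summands of this connected sum are Gorenstein$^*$ and almost quasi-Koszul over $k$, \cref{thm_closed_under_sums} yields that so is the subdivision.

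For the second assertion, I would invoke the standard fact that the boundary complex of a stacked $n$-polytope is obtained from $\partial\Delta^n$ by a finite sequence of stellar subdivisions of facets (this is essentially the definition of a stacked polytope, since stacking a simplex onto a facet of $P$ corresponds combinatorially on the boundary to performing stellar subdivision at that facet). Starting from the base case $\partial\Delta^n$ and applying the closure statement inductively along such a sequence finishes the proof. There is no substantive obstacle: all the real work was carried out in \cref{thm_closed_under_sums}, and this corollary is essentially a packaging of that theorem together with the trivial base case.
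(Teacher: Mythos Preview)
Your proposal is correct and matches the paper's approach: the corollary is stated without a separate proof, since the preceding sentence already identifies stellar subdivision of a facet with the connected sum $K\#_\sigma\partial\Delta^n$, reducing everything to \cref{thm_closed_under_sums}. Your explicit verification that $\partial\Delta^n$ is almost quasi-Koszul (via $\pd_S(I_{\partial\Delta^n})=0$) and your inductive unwinding of the stacked-polytope case are exactly the details the paper leaves implicit.
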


\subsection*{The class of almost quasi-Koszul complexes}
We finish by surveying some examples of simplicial complexes to which \cref{thm_conn_sum} and its corollaries apply. Let $\mathcal{C}$ denote the collection of Gorenstein$^*$ complexes that are almost quasi-Koszul over $\mathbb{Z}$.

For the examples in this section we will use a shorthand denoting monomials in $S=k[v_1,\ldots,v_m]$ by concatenating the corresponding indices, so that for example
\[
I=(123,124,35,46,56)\quad\text{denotes the ideal}\quad \left(v_{1}v_{2}v_{3},\,v_{1}v_{2}v_{4},\,v_{3}v_{5},\,v_{4}v_{6},\,v_{5}v_{6}\right).
\]

\begin{example}[Cyclic polytopes] \label{ex_cyclic}
The cycle graph $K=C_m$, regarded as a $1$-dimensional simplicial complex given by the boundary of an $m$-gon, is the typical example of a sphere triangulation for which $I_K$ has an almost linear resolution. Following \cite{BH}, we say a pure graded free resolution $(F,\partial)$ has \emph{degree type} $(d_p,\ldots,d_1)$ if $d_i$ is the degree of each nonzero entry of a matrix representing the map $\partial_i\colon F_i\to F_{i-1}$ for all $1\leqslant i\leqslant p$, where $p$ is the length of the resolution. Since $C_m$ is flag, the minimal free resolution of its Stanley--Reisner ring has degree type $(\underbrace{2,1,\ldots,1,2}_{m-2})$. 

More generally, let $K=\partial C_{2n}(m)$ be the boundary complex of the cyclic polytope $C_{2n}(m)$, defined as the convex hull of $m$ distinct points along the moment curve $t\mapsto (t,t^2,\ldots,t^{2n}) \in \mathbb{R}^{2n}$. Then $K$ is a neighbourly $(2n-1)$-sphere triangulation, so $I_K$ has an almost linear resolution by \cref{prop_alin}. Degree types $(n+1,1,\ldots,1,n+1)$ of any length and any $n\geqslant 1$ are realised by the Stanley--Reisner rings of this class of examples. 

The boundary complexes $K=\partial C_{2n+1}(m)$ of odd-dimensional cyclic polytopes are also neighbourly, but since they have minimal non-faces of size $n+1$ and $n+2$ (when $m>2n+2$ so $K\neq\partial\Delta^{2n+1}$), their Stanley--Reisner ideals are not equigenerated and hence do not have almost linear resolutions. The minimal free resolutions of these $I_K$ are more complicated and were worked out in~\cite{BoP}, where it can be checked using the explicit formulas for the differentials in~\cite[Section~2]{BoP} that $K=\partial C_{2n+1}(m)$ is almost quasi-Koszul over all fields, and hence over $\mathbb{Z}$. Therefore the boundary complexes of all cyclic polytopes belong to $\mathcal{C}$.
\end{example}

\begin{example}[Stacked polytopes] \label{ex_stacked}
Another family of simplicial polytopes naturally generalising polygons are given by stacked polytopes. Their boundary complexes are constructed by starting with $\partial\Delta^n$ and stellar subdividing facets in any order. Alternatively, these are the boundary complexes $K=\partial P^*$ of the duals of simple polytopes $P=\operatorname{vc}^k(\Delta^n)$ obtained by applying any sequence of vertex cuts to a simplex. The boundary complexes of all stacked polytopes belong to $\mathcal{C}$ by \cref{cor_stellar}.
\end{example}

\begin{example}[Generalised truncation polytopes] 
Both families of examples above include simplicial complexes of the form $K=\partial\Delta^n * \partial\Delta^{n+1}$ (equivalently, $K=\partial P^*$ where $P=\Delta^n\times\Delta^{n+1}$). More generally, $I_K$ is clearly componentwise almost linear for any $K=\partial\Delta^{n_1}*\partial\Delta^{n_2}$, so applying stellar subdivision to the facets of such $K$ leads to another infinite family of complexes belonging to $\mathcal{C}$ by \cref{cor_stellar}. Since all complexes in $\mathcal{C}$ are minimally non-Golod by \cref{cor_mnGolod}, this recovers a result of Limonchenko~\cite[Theorem~3.2]{L} on \emph{generalised truncation polytopes} $P=\operatorname{vc}^k(\Delta^{n_1}\times\cdots\times\Delta^{n_r})$ which shows that $K_P\coloneqq \partial P^*$ is minimally non-Golod when $r\in \{1,2\}$ and $P$ is not a simplex. 

As observed in~\cite{L}, the moment-angle manifolds $\mathcal{Z}_{K_P}$ associated to the generalised truncation polytopes $P=\operatorname{vc}^k(\Delta^{n_1}\times\Delta^{n_2})$ are diffeomorphic to connected sums of sphere products for $0\leqslant k< 2(n_1+n_2-1)$ by~\cite[Theorem~2.2]{GL}. We point out that this bound on the number of vertex cuts $k$ can be removed using a conjecture of Gitler and L\'opez de Medrano that was proved in~\cite{CFW}.
\end{example}

\begin{example}[Neighbourly sphere triangulations] \label{ex_nbrly}
The examples above, and indeed all sphere triangulations $K$ for which the homotopy type of the manifold $\mathcal{Z}_K$ has been determined to date, are boundary complexes of certain simplicial polytopes. It is well known, however, that most sphere triangulations are not polytopal. Let $s(n,m)$ and $c(n,m)$ be the number of combinatorial types of simplicial $(n-1)$-spheres and simplicial $n$-polytopes, respectively, on the vertex set $[m]$. Then by a result of Kalai~\cite{K}, $\lim_{m\to\infty}\frac{c(n,m)}{s(n,m)} = 0$ for every $n\geqslant 5$.

On the other hand, the number of neighbourly sphere triangulations grows very quickly with~$n$ and~$m$: for $n\geqslant 5$, the best-known asymptotic lower bound on the number of neighbourly simplicial $(n-1)$-spheres on the vertex set $[m]$ is $2^{\Omega(m^{\lfloor (n-1)/2\rfloor})}$ \cite{NZ}. (By comparison, for $n\geqslant 4$ Stanley's upper bound theorem implies $s(n,m)\leqslant 2^{O(m^{\lfloor n/2\rfloor}\log m)}$~\cite[Section~4.2]{K}, and $c(n,m)=2^{\Theta(m\log m)}$ by~\cite{Al,GP}). In fact, for any $n$, it is conjectured that most triangulations of the $(2n-1)$-sphere with sufficiently many vertices are neighbourly (see~\cite[Section~6.3]{K}). Since simplicial spheres $K$ which are odd-dimensional and neighbourly are precisely those for which $I_K$ has an almost linear resolution by \cref{prop_alin}, they belong to $\mathcal{C}$ and the topological results of \cref{sec_manifolds} can be viewed as describing the homotopy types of (conjecturally) generic moment-angle manifolds.
\end{example}

\begin{example}[The Br\"uckner--Gr\"unbaum sphere]
By Steinitz's theorem, every simplicial $2$-sphere can be realised as the boundary complex of a $3$-polytope. The first example of a non-polytopal simplicial sphere is an $8$-vertex triangulation of the $3$-sphere known as the \emph{Br\"uckner--Gr\"unbaum sphere} \cite{GSr}, which can be defined as the link of any vertex in the minimal $9$-vertex triangulation of $\mathbb{C}P^2$. Its minimal non-faces can be read off from a list of its $20$ facets, from which we obtain the Stanley--Reisner ideal 
\[
I_K= (125,135,136,138,145,146,246,247,256,257,278,348,357,378,468,678).
\]
Since $I_K$ has no generators of degree two, $K$ is neighbourly and hence $K\in \mathcal{C}$ by \cref{prop_alin}. We can also see directly that $K$ is almost linear, since, using {\tt Macaulay2}, the  Betti table of its Stanley--Reisner ring is given by:
         \[\beta_{i,j}\big(S/I_K\big):\qquad 
         \begin{array}{c|c c c c c}
         { \scriptstyle j-i}^{\ i}\!& 0 & 1 & 2 & 3 & 4\\ \hline

         0 & 1 & . & . & . & .\\
         1 & . & . & . & . & .\\
         2 & . & 16 & 30 & 16 & .\\
         3 & . & . & . & . & .\\
         4 & . & . & . & . & 1
\end{array}
\hspace{25mm}
\]
\end{example}

The next two examples illustrate that each of the inclusions
\[
\left\{ \substack{I_K\text{ with almost}\\ \text{linear resolution}} \right\} \subset
\left\{ \substack{\text{componentwise}\\ \text{almost linear }I_K} \right\} \subset
\left\{ \substack{\text{almost quasi-}\\ \text{Koszul }I_K} \right\}
\]
are proper (as is the case with the word ``almost" removed, cf. \cref{se_lin_background}).

\begin{example}
The complete intersection ideal $I_K$ associated to any $K=\partial\Delta^{n_1}*\partial\Delta^{n_2}$ with $n_1\neq n_2$ is clearly componentwise almost linear, but does not have an almost linear resolution since $I_K$ is not equigenerated. A less trivial example is given by the Stanley--Reisner ideal $I_K=(123,124,36,45,56)$
of the stacked polytope $K$ obtained from $\partial\Delta^3$ by stellar subdividing two of its facets (i.e., $K=\partial P^*$ where $P=\operatorname{vc}^2(\Delta^3)$). 
\end{example}

\begin{example}
\label{ex_AQK}
To begin with, let $K'=\partial \Delta^2\ast\partial \Delta^2$ be the simplicial complex with Stanley--Reisner ideal $I_{K'} = (123,456)$ in $S'=k[v_1,\ldots,v_6]$. Then $I_{K'}$ has an almost linear resolution (and in particular, is almost quasi-Koszul). Now let $K$ be the stellar subdivision of $K'$ at the facet $1245$. The corresponding Stanley--Reisner ideal in $S=k[v_1,\ldots,v_7]$ is
\[
I_K=(1245,123,456,37,67),
\]
and it follows from \cref{cor_stellar} that $K$ is almost quasi-Koszul. We can also use {\tt Macaulay2} to see this algebraically: the minimal free resolution $F$ of $k[K]$ is
\[
{S^{1}}
\xleftarrow{\left(\!\begin{smallmatrix}v_{3}v_{7}&v_{6}v_{7}&v_{1}v_{2}v_{3}&v_{4}v_{5}v_{6}&v_{1}v_{2}v_{4}v_{5}\end{smallmatrix}\!\right)}
        {S^{5}}
        \xleftarrow{\left(\!\begin{smallmatrix}
        -v_{6}&-v_{1}v_{2}&0&0&0\\
        v_{3}&0&-v_{4}v_{5}&0&0\\
        0&v_{7}&0&-v_{4}v_{5}&0\\
        0&0&v_{7}&0&-v_{1}v_{2}\\
        0&0&0&v_{3}&v_{6}
        \end{smallmatrix}\!\right)}\,{S^{5}}
        \xleftarrow{\left(\!\begin{smallmatrix}
        -v_{1}v_{2}v_{4}v_{5}\\
        v_{4}v_{5}v_{6}\\
        -v_{1}v_{2}v_{3}\\
        v_{6}v_{7}\\
        -v_{3}v_{7}
        \end{smallmatrix}\!\right)}{S^{1}}\xleftarrow{}{0}.
        \]
        The action of $\Lambda=\Ext_S(k,k)$ on $\Tor^S(k[K],k)$ can be computed by following linear terms in the differential of $F$, see \cref{rem_linear_part_action}, and hence the linear entries in each row of the $5\times 5$ matrix show directly that $k[K]$ is almost quasi-Koszul.

        Despite this, $I_K$ is not componentwise almost linear; if it were then for all $n$ the ideal $(I_K)_{\langle n\rangle}$, generated by elements in $I_K$ of degree $n$, would have a resolution that is linear in the first $2$ steps. However, again using {\tt Macaulay2}, the ideal $(I_K)_{\langle 3\rangle}=$
          \[
          \left(\begin{smallmatrix}
        v_{1}v_{2}v_{3},\,v_{4}v_{5}v_{6},\,v_{1}v_{3}v_{7},\,v_{2}v_{3}v_{7},\,v_{3}^{2}v_{7},\,v_{3}v_{4}v_{7},\,v_{3}v_{5}v_{7},\,v_{1}v_{6}v_{7},\,v_{2}v_{6}v_{7},\,v_{3}v_{6}v_{7},\,v_{4}v_{6}v_{7},\,x_{5}v_{6}v_{7},\,v_{6}^{2}v_{7},\,v_{3}v_{7}^{2},\,v_{6}v_{7}^{2}\end{smallmatrix}\right)
          \]
        has a free resolution with Betti table
\[             \beta_{i,j}\big(S/(I_K)_{\langle 3\rangle}\big):\qquad \begin{array}{c|c c c c c c c c}
    { \scriptstyle j-i}^{\ i}\! & 0 & 1 & 2 & 3 & 4 & 5 & 6 & 7\\ \hline 
        0& 1 & . & . & . & . & . & . & .\\
        1 & . & . & . & . & . & . & . & .\\
        2 & . & 15 & 44 & 70 & 70 & 42 & 14 & 2\\
        3 & . & . & . & . & . & . & . & .\\
        4 & . & . & 1 & 1 & . & . & . & .
        \end{array}
        \hspace{25mm}
        \]
The fact that $\beta_{2,6}(S/(I_K)_{\langle 3\rangle})\neq 0$ shows that $I_K$ is not componentwise almost linear.

This example also shows that \cref{thm_closed_under_sums} and its corollaries do not hold for the almost linear or componentwise almost linear properties.
\end{example}

\begin{example}[Triangulations of $S^3$]
According to \cite{LUTZ} there are 1343 triangulations of $S^3$ with 9 or fewer vertices, listed on the \emph{Manifold Page} of Frank Lutz at \cite{ManifoldPage}. By \cref{thm_conn_sum} we can identify the rational homotopy type of the corresponding moment-angle manifold whenever they are almost quasi-Koszul. Fortunately, it is feasible to check all of these simplical complexes using {\tt Macaulay2}, and it turns out that 144 of the 1343 triangulations are almost quasi-Koszul (only 57 of these are neighbourly), so that the class $\mathcal{C}$ is already quite large when restricted to this number of vertices. As mentioned in \cref{ex_nbrly}, it is expected that most odd-dimensional sphere triangulations are almost quasi-Koszul when the number of vertices is sufficiently large.
\end{example}

\begin{example}[Triangulations of homology spheres]
The class $\mathcal{C}$ also contains many Gorenstein$^*$ complexes $K$ which are not sphere triangulations. To see this, note that any triangulation of a homology sphere is Gorenstein$^*$, so for example any neighbourly triangulation of the Poincar\'e homology sphere is in $\mathcal{C}$ by \cref{prop_alin}. We note that every homology sphere of dimension three admits a neighbourly triangulation by~\cite[Corollary~5.15]{Sw}; in particular, for any perfect $3$-manifold group $G$ there exist complexes $K\in\mathcal{C}$ with $\pi_1(|K|)\cong G$. Further families of such $K$ which are not neighbourly but still in $\mathcal{C}$ can then be constructed using \cref{cor_stellar}, since $\pi_1(|K|)$ is invariant under stellar subdivision.
\end{example}

\end{document}